\documentclass[10pt,twoside]{siamart1116}

\usepackage[english]{babel}
\usepackage{graphicx,epstopdf,epsfig}
\usepackage{amsfonts,epsfig,fancyhdr,graphics, hyperref,amsmath,amssymb}

\setlength{\textheight}{210mm}
\setlength{\textwidth}{165mm}
\topmargin = -10mm

\setlength{\parskip}{.1in}

\def\cvd{~\vbox{\hrule\hbox{%
     \vrule height1.3ex\hskip0.8ex\vrule}\hrule } }




\newcommand{\HE}{Name of Handling Editor}
\newcommand{\DoS}{Month/Day/Year}
\newcommand{\DoA}{Month/Day/Year}
\newcommand{\CA}{Fernando De Ter\'{a}n}

\newcommand{\Names}{Fernando De Ter\'an, Carla Hernando, Javier P\'erez}
\newcommand{\Title}{Structured strong $\boldsymbol{\ell}$-ifications for structured matrix polynomials in the monomial basis}






\usepackage{enumerate}
\usepackage{multirow}
\usepackage{mathdots}
\usepackage[all]{xy}

\newcommand{\la}{{\lambda}}
\newcommand{\FF}{{\mathbb{F}}}
\newcommand{\cM}{{\mathbb{M}}}

\newtheorem{Teo}{Theorem}[section]
\newtheorem{Def}[Teo]{Definition}

\newtheorem{Rem}[Teo]{Remark}

\newtheorem{Lem}[Teo]{Lemma}
\newtheorem{Prop}[Teo]{Proposition}
\newtheorem{Exa}[Teo]{Example}

\newenvironment{smallarray}[1]
{\null\,\vcenter\bgroup\scriptsize
	\arraycolsep=.13885em
	\hbox\bgroup$\array{@{}#1@{}}}
{\endarray$\egroup\egroup\,\null}


\begin{document}


\setcounter{page}{1}

\thispagestyle{empty}

\title{
\Title\thanks{Received by the editors on \DoS. Accepted for publication on \DoA. Handling Editor: \HE. Corresponding Author: \CA. \newline
	This work has been partially supported by the {\em Ministerio de Econom\'ia  Competitividad} of Spain through grants MTM2017-90682-REDT and MTM2015-65798-P.}
}

\author{
	Fernando de Ter\'an \thanks{Departamento de Matem\'aticas, Universidad Carlos III de Madrid, Avenida de la Universidad 30, 28911 Legan\'es, Spain (fteran@math.uc3m.es, cahernan@math.uc3m.es).}
\and
Carla Hernando \footnotemark[2]
\and 
Javier P\'erez \thanks{Department of Mathematical Sciences, University of Montana, Missoula, MT 59812, USA (javier.perez-alvaro@mso.umt.edu).}
}

\markboth{\Names}{\Title}

\maketitle

\begin{abstract}
In the framework of Polynomial Eigenvalue Problems, most of the matrix polynomials arising in applications are structured polynomials (namely (skew-)symmetric, (skew-)Hermitian, (anti-)palindromic, or alternating). The standard way to solve Polynomial Eigenvalue Problems is by means of linearizations. The most frequently used linearizations belong to general constructions, valid for all matrix polynomials of a fixed degree, known as {\em companion linearizations}. It is well known, however, that is not possible to construct companion linearizations that preserve any of the previous structures for matrix polynomials of even degree. This motivates the search for more general companion forms, in particular {\em companion $\ell$-ifications}. In this paper, we present, for the first time, a family of (generalized) companion $\ell$-ifications that preserve any of these structures, for matrix polynomials of degree $k=(2d+1)\ell$. We also show how to construct sparse $\ell$-ifications within this family. Finally, we prove that there are no structured companion quadratifications for quartic matrix polynomials.
\end{abstract}

\begin{keywords}
Matrix pencils, Matrix polynomials, Linearizations, Polynomial Eigenvalue Problems, $\ell$-ifications, Eigenvalues, Eigenvectors, Companion forms, Structured matrix polynomials, (Anti-)Palindromic, (Skew)-Symmetric, (Skew-)Hermitian, Alternating.
\end{keywords}
\begin{AMS}
15A18, 15A21, 15A22, 15B57, 65F15.
\end{AMS}



\section{Introduction} 

Matrix polynomials
\begin{equation}\label{mpoly}
	P(\la) = \sum_{j=0}^k \la^j P_j,\ {\rm with\ } P_0,\hdots,P_k \in \FF^{n \times n}
\end{equation}
(where $\FF$ is an arbitrary field) arise frequently associated with {\em Polynomial Eigenvalue Problems} (PEPs) (see, for instance, \cite{BHMS13,mehrmann2004nonlinear,TiMe01}). The standard way to solve PEPs is by means of {\em (strong) linearizations} (see Definition \ref{StrongLifications}), which are matrix pencils (namely, matrix polynomials of grade $1$, see Section \ref{preli_sec}), that allow to recover the relevant information of the PEP (the so-called {\em eigenstructure}). Two particular interesting features of linearizations from the point of view of applications are being ``easily constructible" from the coefficients, $P_0,\hdots,P_k$, of the polynomial, and valid for all matrix polynomials of a given grade. This is the case, for instance, of the classical {\em Frobenius companion pencils}
\begin{equation}\label{frobenius}
	F_1(\la):=\, \la \left[ \begin{array}{c@{\mskip8mu}c@{\mskip8mu}c@{\mskip8mu}c} P_k&&&\\[1pt]&I_n&&\\[-3pt]&&\ddots&\\&&&I_n\end{array}\right] \,+\,
	\left[ \begin{array}{c@{\mskip8mu}c@{\mskip8mu}c@{\mskip8mu}c}
		P_{k-1} & P_{k-2} & \cdots & P_0 \\[1pt]
		-I_n & 0 & \cdots & 0 \\[-3pt]
		& \ddots & \ddots & \vdots\\
		0 & & -I_n & 0
	\end{array}\right],\quad {\rm and} \quad F_2(\la)=F_1(\la)^{\mathfrak B}, 
\end{equation}
where $(\cdot)^{\mathfrak B}$ means block transposition). These pencils are particular cases of {\em companion linearizations} for grade-$k$ matrix polynomials \eqref{mpoly}. These are block-partitioned symbolic constructions that depend on the coefficients of the polynomial and which are strong linearizations for all matrix polynomials of a fixed grade. One of the main advantages of these linearizations is that they are easily constructible (you just need to place the coefficients of the polynomial in some specific positions, together with other entries, typically some identities, though there can be other additional entries involved) and that they are valid for all matrix polynomials. In the recent years, several families of companion linearizations have been introduced in the literature \cite{antoniou2004new,bueno2014eigenvectors,bueno2011recovery,de2010fiedler,dopico2018block,vologiannidis2011permuted}. 

One relevant feature of matrix polynomials that usually arise in applications is the {\em structure}. Here, the term ``structure" refers to certain symmetries in the entries of the coefficients, $P_0,\hdots,P_k$, of the matrix polynomial. In particular, the most frequently studied structures in the context of PEPs are the (skew-) symmetric, the (skew-)Hermitian, the(anti-)palindromic, and the alternating structures \cite{AhMe13,BHMS13,BoKaMeSha14,BoMe06,HiMM04,mackey2006structured,mackey2009numerical,MeXu15} (see Definition \ref{Structures-Mobius} for the formal definition of all these structures). The symmetries on the coefficients that define all these structures imply certain symmetries in the spectral structure of the matrix polynomial (for instance, in the {\em spectrum}). If these symmetries in the coefficients of the polynomial are not preserved in the linearization, then, as a consequence of the rounding errors that would affect the corresponding numerical method (which is applied to the linearization), the symmetries in the spectrum would be lost, providing meaningless results. Therefore, it is important to look for linearizations that preserve these symmetries in the coefficients of the matrix polynomial. We will refer to these linearizations as {\em structured linearizations}. The interest of the community in such constructions resulted in several families of structured linearizations (not companion) \cite{AnVo06,mackey2006structured}. Later on, structured companion linearizations were obtained for the palindromic structure \cite{bueno2012palindromic,de2011palindromic}, for the symmetric structure \cite{bueno2014structuredI,bueno2018calcolo,bueno2014structuredII,bueno2018explicit}, and for all of them \cite{dopico2019structured} (note that some of these are quite recent references). However, these structured companion linearizations are not valid for all polynomials. Actually, as it was discovered in \cite[Ths. 7.20 and 7.21]{de2014spectral}, there are no companion linearizations for structured matrix polynomials with even grade, for any of the structures mentioned so far.

The non-existence of structured companion linearizations motivates the search for other constructions that allow to recover the eigenstructure of the matrix polynomial \eqref{mpoly} through a matrix polynomial with higher grade $\ell\geq1$. A particular case of these constructions is the {\em companion $\ell$-ifications} (see Definitions \ref{CompLification} and \ref{GenCompLification}). These are block-partitioned matrix polynomials of grade $\ell$, with blocks of size $n\times n$ (like companion linearizations) that contain the coefficients, $P_0,\hdots,P_k$, of the polynomial among their blocks, and that allow to recover the eigenstructure of the matrix polynomial, as companion linearizations do (in particular, they include companion linearizations, when $\ell=1$). Companion $\ell$-ifications for general (non-structured) matrix polynomials have been considered in several recent references \cite{binirobol16,de2014spectral,de2016ellifications,dopico2019block}. In particular, a relevant family, known as {\em companion block-Kronecker $\ell$-ifications}, has been recently introduced in \cite{dopico2019block}, extending the family of block-Kronecker linearizations \cite{dopico2018block}. 

However, so far no families of structured companion $\ell$-ifications are known, apart of some particular examples provided in \cite[Ex. 5]{dopico2018block} for the symmetric structure, or the strong linearization introduced in \cite{huang2011palindromic} for the palindromic structure, which is quasi-companion, but not exactly companion (see Section \ref{SectionNoStructCompLific}).

In this paper, we introduce a family of structured $\ell$-ifications, valid for all divisors, $\ell$, of the grade $k$ of the matrix polynomial \eqref{mpoly} such that $k=(2d+1)\ell$, for some $d$, namely, $k$ is the product of $\ell$ times an odd number. We obtain a family of $\ell$-ifications for any of the structures mentioned so far (that is, the ones in Definition \ref{Structures-Mobius}). Our construction is a particular case of the block-minimal bases $\ell$-ifications introduced in \cite{dopico2018block}. More precisely, we choose a particular minimal basis that allows us to create structured constructions which lie in the family of \cite{dopico2019block}. One of the features of these families is that they allow to recover the {\em minimal indices} (which is part of the eigenstructure, and not preserved by general $\ell$-ifications) of the matrix polynomial \eqref{mpoly} from the minimal indices of any $\ell$-ification in the families. In particular, we provide explicit and very elementary formulas for the minimal indices of the matrix polynomial in terms of the minimal indices of these $\ell$-ifications. These formulas are a direct consequence of the general recovery formulas provided in \cite{dopico2019block} for the block-minimal bases $\ell$-ifications.

We are particularly interested in {\em sparse} constructions, namely those having the smallest possible number of nonzero block entries. Sparsity is a feature that raises when looking for the simplest constructions, but also has a numerical motivation, since one could devise more efficient algorithms by taking advantage of the sparsity. We first determine the smallest possible number of nonzero block entries in any $\ell$-ification in the families considered in the paper, and then we provide a procedure to construct $\ell$-ifications with exactly this number of nonzero block entries. In particular, this number is only attainable for the palindromic structure, since, as we see in Corollary \ref{CorollarySparse}, for the (skew)-symmetric, (skew-)Hermitian, and alternating structures, all $\ell$-ifications in the proposed family have a larger number of nonzero entries (this number is determined in the proof of this corollary).

After the previous considerations, a natural question arises: Is it possible to obtain similar constructions when $k=(2d)\ell$? In this paper, we deal mainly with the notion of what we call {\em generalized companion $\ell$-ification} (see Definition \ref{GenCompLification}), instead of the one of {\em companion $\ell$-ification} (or {\em companion form}), which is used, for instance, in \cite[Def. 1.1]{de2011palindromic} and \cite[Def. 5.1]{de2014spectral} (see Definition \ref{CompLification}). The first notion allows for more flexibility in the blocks. More precisely, they are polynomials in the coefficients, $P_0,\hdots,P_k$, of the matrix polynomial \eqref{mpoly} so, for instance, sums and products of these coefficients may appear in the blocks of a generalized companion $\ell$-ification. However, in the last notion, the only nonzero blocks allowed are of the form $I_n$ or $P_j$, for $j=0,\hdots,k$, up to multiplication by scalars (namely, numbers in $\FF$). This notion of companion form is consistent with the use of the term ``companion" in several references on companion matrices for scalar polynomials, like \cite{eastman2014companion,eastman2016pentadiagonal}. Despite its apparently restrictive definition, all the seminal families of companion-like linearizations (namely, the Fiedler-like families \cite{antoniou2004new,bueno2014eigenvectors,bueno2011recovery,de2010fiedler,vologiannidis2011permuted}) consist of this kind of constructions, and the block-Kronecker family \cite{dopico2018block} also contains many pencils of this form. Moreover, all the structured families that we present in this work include such constructions. Therefore, to consider companion $\ell$-ifications instead of generalized companion $\ell$-ifications makes sense. So the question at the beginning of this paragraph can be split in the following two questions: (Q1) Is it possible to obtain structured generalized companion $\ell$-ifications for matrix polynomials with grade $k=(2d)\ell$? and (Q2) Is it possible to obtain structured companion $\ell$-ifications for matrix polynomials with grade $k=(2d)\ell$? The answer to (Q1) is positive. In particular, it is possible to easily construct structured generalized companion quadratifications ($\ell=2$) for palindromic matrix polynomials of grade $k=4d$ from the quadratifications introduced in \cite{huang2011palindromic} (see the example at the beginning of Section \ref{SectionNoStructCompLific}). However, these constructions involve the product of several coefficients of the original matrix polynomial \eqref{mpoly}, so they are not just companion. In Section \ref{SectionNoStructCompLific}, we prove that there are no structured companion quadratifications for quartic matrix polynomials (that is, $k=4$), which means that the answer to (Q2) is negative when $d=1$ and $\ell=2$. Then, constructions like the ones mentioned above are not possible (at least for $k=2\ell$ and $\ell=2$) if we restrict ourselves to companion forms instead of generalized companion forms.

Finally, we want to emphasize that in this work we are just considering square matrix polynomials which are expressed in the monomial basis. First, it makes no sense to consider rectangular polynomials, because the structure imposes that the matrix polynomial must have square coefficients. However, most of the families of linearizations or $\ell$-ifications known so far are valid for rectangular matrix polynomials (see, for instance, \cite{de2012fiedler,dopico2018block,dopico2019block}). As for the polynomial basis, several authors have considered linearizations of matrix polynomials in different polynomials bases \cite{amiraslani2008linearization,ashkar2020linearizations,corless2007generalized,fassbender17,lawrence2017constructing,noferini2016fiedler,robol2017framework}. 

The paper is organized as follows. In Section \ref{preli_sec} we introduce or recall the notation and the basic notions and tools that are used in further sections of the manuscript. Section \ref{bmb-sec} presents the general construction (namely, the ``(strong) block-minimal bases matrix polynomials") that motivates our structured $\ell$-ifications, together with the fundamental result on which they rely (Theorem \ref{SLification}). A natural extension to structured matrix polynomials of grade $\ell$ is also introduced. Section \ref{KroneckerStructured} is the main section of this manuscript, where we present our constructions. The main results are Theorems \ref{NoStructLlambdaTeo} and \ref{StructLgeneral}. Then, in Subsection \ref{SectionSparse}, we analyze the sparse structured $\ell$-ifications within the new families. In Section \ref{SectionNoStructCompLific} we show that there are no structured companion quadratifications for quartic matrix polynomials (Theorem \ref{NoStructCompLific}) and, finally, in Section \ref{conclusion_sec}, we summarize the main results of the manuscript and present some lines of further research.

\section{Preliminaries}\label{preli_sec}

Throughout the paper we use the following notation. Let $\FF$ be an arbitrary field, and we denote by $\FF[\la]$ the ring of polynomials in the variable $\la$ with coefficients in $\FF$ and by $\FF(\la)$ the field of rational functions with coefficients in $\FF$. The algebraic closure of $\FF$ is denoted by $\overline{\FF}$. The set of $m \times n$ matrices with entries in $\FF[\la]$ or $\FF(\la)$ are, respectively, denoted by $\FF[\la]^{m \times n}$ or $\FF(\la)^{m \times n}$. 

We say that the nonzero matrix polynomial $P(\la)$ in \eqref{mpoly} has {\it grade} $k$. Note that some of the coefficients $P_j$, including the leading coefficient $P_k$, may be the zero matrix. The {\it degree} of $P(\la)$, denoted by $\deg(P)$, is the largest integer, $j \geq 0$, such that $P_j \neq 0$. In particular, when $k=1$, we say that $P(\la)$ is a {\it matrix pencil}. A matrix polynomial is said to be {\it unimodular} if its determinant is a nonzero constant.

We recall the notation for some matrix operations that we use in this paper. The set of $n \times n$ invertible matrices with entries in $\FF$ is denoted by $GL(n, \FF)$. Given two matrices $A$ and $B$, $A \otimes B$ denotes their Kronecker product \cite{horn1994}. Now, let $A \in \FF^{m \times n}$ be a constant matrix. Then, $A^\top$ denotes the transpose of $A$, $A^\ast$ denotes the conjugate transpose of $A$, which is used only when $\FF = \mathbb{C}$, and $A^\star$ denotes either the transpose or the conjugate transpose. Finally, when $\FF=\mathbb{C}$ again, $\overline{A}$ is a matrix whose entries are the conjugate of the entries of $A$, so $\overline{A} = A$ when $\FF = \mathbb{R}$. When these operations on constant matrices are applied on matrix polynomials, the matrix polynomial is seen as follows:
$P^{\diamondsuit}(\la) = \sum_{j=0}^k \la^j P_j^\diamondsuit$, where $\diamondsuit = \{\top,\ast,\star \}$ and $\overline{P}(\la) = \sum_{j=0}^k \la^j \overline{P_j}$, i.e., these operations are just applied on the coefficients of the matrix polynomial $P(\la)$, and not to the variable $\la$.

Another notion for square matrices that plays an important role in this work is the one of {\it coninvolotury} matrix \cite{horn1993real}, which is a matrix $A \in \FF^{n \times n}$ such that $A \cdot \overline{A} = I_n$. When $\FF = \mathbb{R}$, coninvolutory matrices are known as {\it involutory} matrices, and the condition reads $A^2 = I_n$.

In our analysis of strong $\ell$-ifications for matrix polynomials, we consider the ${\rm rev}$ operator. In what follows, the {\it $k$th reversal} of $P(\la)$ is the polynomial ${\rm rev}_kP(\la) := \sum_{j=0}^k \la^j P_{k-j}$, obtained by reversing the order of the coefficients of $P(\la)$. 

\begin{Def}{\rm\cite[Def. 3.3]{de2014spectral}}\label{StrongLifications}
{\rm
	A matrix polynomial $L(\la)$ of grade $\ell$ is said to be an {\rm $\ell$-ification} of a given $n \times n$ matrix polynomial $P(\la)$ of grade $k$ if, for some $s \ge 0$, there exist two unimodular matrix polynomials $U(\la)$ and $V(\la)$ such that 
	$$U(\la)L(\la)V(\la) = \begin{bmatrix} I_s & 0 \\ 0 & P(\la) \end{bmatrix}.$$
	If, additionally, the matrix polynomial ${\rm rev}_\ell L(\la)$ is an $\ell$-ification of ${\rm rev}_k P(\la)$, then $L(\la)$ is said to be a {\rm strong $\ell$-ification} of $P(\la)$. When $\ell=1$, (strong) $\ell$-ifications are called (strong) linearizations. When $\ell=2$, (strong) $\ell$-ifications are called (strong) quadratifications.
}
\end{Def}

The main motivation for dealing with (strong) $\ell$-ifications of a matrix polynomial is to recover all the (scalar) spectral information of the polynomial, namely: (a) the finite elementary divisors (finite eigenvalues together with their multiplicities), (b) the infinite elementary divisors (infinite eigenvalue, together with its multiplicities), (c) right minimal indices, and (d) left minimal indices. 

We refer the reader to \cite[Section 7.1]{de2014spectral} for more details on the spectral information. 

Any strong $\ell$-ification $L(\la)$ of a matrix polynomial $P(\la)$ has the same finite and infinite elementary divisors as $P(\lambda)$ \cite[Theorem 4.1]{de2014spectral} and the same number of left (resp. right) minimal indices. However, the set of left (resp. right) minimal indices of $L(\la)$ and $P(\la)$ can be different. Therefore, in order to recover all the spectral information, it is important to provide formulas to recover the left and right minimal indices of $P(\lambda)$ from those of the $\ell$-ification. In Theorems \ref{NoStructLlambdaTeo} and \ref{StructLgeneral}, which are the main new results in Sections \ref{bmb-sec}--\ref{KroneckerStructured}, we will show the relationship between the left and right minimal indices of the proposed $\ell$-ifications and those of the original polynomial.

The notion of {\it companion form (of grade $\ell$)} was introduced, for the first time, in \cite[Def. 5.1]{de2014spectral} as a uniform template of grade-$\ell$ matrix polynomials, which are constructed from the entries in the coefficients of $P(\la)$, in such a way that they are strong $\ell$-ifications for every matrix polynomial $P(\la)$. We refer the reader, again, to \cite{de2014spectral} for more information on this concept and its properties. Nonetheless, as the authors commented just after \cite[Def. 5.1]{de2014spectral}, the most common way to build such $\ell$-ifications is by means of a template obtained after block-partitioning each coefficient of the grade-$\ell$ matrix polynomial in such a way that each nonzero block is either an identity or a coefficient, $P_0,\hdots,P_k$, of the matrix polynomial (up to constants). This more restrictive definition corresponds to the notion of {\em companion form} in \cite{de2011palindromic} for matrix pencils, and its natural extension to grade-$\ell$ matrix polynomials leads to the notion of {\it companion form (of grade $\ell$)} (or {\it companion $\ell$-ification}, to abbreviate) that we will use in this paper. It is presented in Definition \ref{CompLification}.


\begin{Def}\label{CompLification}
{\rm
	Given a divisor, $\ell$, of $k$, a {\it companion $\ell$-ification} for $n \times n$ matrix polynomials $P(\lambda)$ of grade $k$ is an $\frac{nk}{\ell} \times \frac{nk}{\ell}$ block partitioned matrix polynomial $L(\lambda) = \sum_{i=0}^\ell \la^i L_i$ of grade $\ell$ with $\frac{k}{\ell} \times \frac{k}{\ell}$ blocks of size $n \times n$ such that:
	\begin{enumerate}[{\rm (a)}]
		\item each nonzero block of $L_i$, for $i=0,\hdots,\ell$, is either of the form $\alpha I_n$ or $\alpha P_j$, for some $j = 0,1,\hdots,k$, and $\alpha \in \FF\backslash\{0\}$, and
		\item $L(\la)$ is a strong $\ell$-ification for every $n \times n$ matrix polynomial of grade $k$.
	\end{enumerate}
}
\end{Def}

The following notion is an extension, to matrix polynomials of grade $\ell$, of the family of {\it generalized companion pencils} introduced in \cite{de2020gen}. This notion allows for more flexibility in the nonzero blocks of $L(\la)$ than Definition \ref{CompLification}.

\begin{Def}\label{GenCompLification}
{\rm 
	A {\it generalized companion $\ell$-ification} for $n \times n$ matrix polynomials $P(\lambda)$ of grade $k$ as in \eqref{mpoly} is a $\frac{nk}{\ell} \times \frac{nk}{\ell}$ block partitioned matrix polynomial $L(\lambda) = \sum_{i=0}^\ell \la^i L_i$ of grade $\ell$ with $\frac{k}{\ell} \times \frac{k}{\ell}$ blocks of size $n \times n$ such that:
	\begin{enumerate}[{\rm (a)}]
		\item each nonzero block of $L_i$, for $i=0,\hdots,\ell$, is a polynomial in $P_0,\hdots,P_k$ (i.e., it belongs to $\FF[P_0,\hdots,P_k]$), and
		\item $L(\la)$ is a strong $\ell$-ification for every $n \times n$ matrix polynomial of grade $k$.
	\end{enumerate}
}
\end{Def}

Our goal in this paper is the construction of structure-preserving strong $\ell$-ifications for structured matrix polynomials, for any of the structures introduced in Definition \ref{Structures-Mobius}. By a {\em structured (generalized) companion $\ell$-ification} we mean a (generalized) companion $\ell$-ification with the additional property that it is structured whenever $P(\lambda)$ is. These structured $\ell$-ifications will be addressed in Section \ref{KroneckerStructured}.

All the families of strong $\ell$-ifications that we present in this paper are generalized companion $\ell$-ifications. We particularize to the notion of structured companion $\ell$-ification (not generalized) only in Section \ref{SectionNoStructCompLific}, where we prove that there are no structured companion quadratifications for structured quartic matrix polynomials.

\subsection{Dual minimal bases}

Another fundamental concept in this work is the one of {\it dual minimal bases}. The name of ``dual minimal bases" and its definition were introduced in \cite[Def. 2.10]{de2016polynomial}, but their origins go back to \cite{forney1975minimal}. This is the essential tool in the development of the block-Kronecker linearizations and $\ell$-ifications \cite{dopico2018block}--\cite{dopico2019block} and also for linearizations in other bases other than the monomial basis \cite{robol2017framework}.


By ${\rm rank}\,P(\la_0)$ we mean the rank of the constant matrix $P(\la_0)$, obtained by evaluating the matrix polynomial $P(\la)$ as in \eqref{mpoly} at $\la_0$. We say that $P(\la_0)$ has {\it full row} (resp. {\it column}) {\it rank} if ${\rm rank}\, P(\la_0) = m$ (resp. ${\rm rank}\,P(\la_0) = n$). The {\it $i$th row degree} of a matrix polynomial $P(\la)$ is the degree of the $i$th row of $P(\la)$ (namely, the largest degree of the entries of this row).

We also need the following notions, that can be found, for instance, in \cite{de2016polynomial}.

\begin{Def}
{\rm 
	Let $P(\la) \in \FF[\la]^{m \times n}$ be a matrix polynomial with row degrees $k_1,\hdots,k_m$. The {\it highest row degree coefficient matrix} of $P(\la)$, denoted by $P_h$, is the $m \times n$ constant matrix whose $i$th row is the coefficient of $\la^{k_i}$ in the $i$th row of $P(\la)$, for $i=1,\hdots,m$. The matrix polynomial $P(\la)$ is called {\it row reduced} if $P_h$ has full row rank.
}
\end{Def}

\begin{Def}\label{minbases}
{\rm 
	The matrix polynomial $P(\la) \in \FF[\la]^{m \times n}$ is a {\it minimal basis} if and only if the following conditions are satisfied:
	\begin{enumerate}[{\rm (a)}]
		\item $P(\la)$ is row reduced, and
		\item $P(\la_0)$ has full row rank for all $\la_0 \in \overline{\FF}$.
	\end{enumerate}
}
\end{Def}

\begin{Def}\label{dualminbases}
{\rm 
	Two matrix polynomials $P(\la) \in \FF[\la]^{m_1\times n}$ and $Q(\la) \in \FF[\la]^{m_2\times n}$ are called {\it dual minimal bases} if the following conditions are satisfied:
	\begin{enumerate}[{\rm (a)}]
		\item $P(\la)$ and $Q(\la)$ are both minimal bases,
		\item $m_1+m_2=n$, and
		\item $P(\la)Q^\top(\la) = 0$.
	\end{enumerate}
}
\end{Def}

The following matrix polynomials, which are dual minimal bases (see Definition \ref{dualminbases}) are known as ``block-Kronecker dual minimal bases". We will use these dual minimal bases in Section \ref{KroneckerStructured} to construct structured generalized companion $\ell$-ifications of a matrix polynomial $P(\lambda)$. The main advantage of these particular minimal bases relies on their simplicity, which allows, in particular, to easily relate the $\ell$-ification with the original polynomial $P(\lambda)$:
\begin{eqnarray}
L_d(\la)&:=&\begin{bmatrix} -1 & \la & & & \\ & -1 & \la & & \\ & & \ddots & \ddots & \\ & & & -1 & \la \end{bmatrix} \in \FF[\la]^{d \times (d+1)}, \ \ {\rm and} \label{Ld} \\
\Lambda_d^\top(\la)&:=&\begin{bmatrix} \la^d & \cdots & \la & 1 \end{bmatrix} \in \FF[\la]^{1 \times (d+1)}. \label{Lambdad}
\end{eqnarray}

Note that $L_d(\la)$ and $\Lambda_d^\top(\la)$ satisfy all the conditions in Definition \ref{dualminbases}, so they are dual minimal bases. The following lemma shows how to easily obtain other pairs of dual minimal bases extending the block-Kronecker dual minimal bases to higher degrees.

\begin{Lem}{\rm\cite[Lemma 3.6]{dopico2019block}}\label{dualminbasesell}
	Let $L_d(\la)$ and $\Lambda_d^\top(\la)$ be the matrix polynomials defined, respectively, in \eqref{Ld} and \eqref{Lambdad}. Then, for any $\ell \in \mathbb{N}$ the following statements hold.
	\begin{enumerate}[{\rm (a)}]
		\item The matrix polynomials $L_d(\la^\ell)$ and $\Lambda_d^\top(\la^\ell)$ are dual minimal bases.
		\item For any $n \in \mathbb{N}$, the matrix polynomials $L_d(\la^\ell) \otimes I_n$ and $\Lambda_d^\top(\la^\ell) \otimes I_n$ are dual minimal bases.
	\end{enumerate}
\end{Lem}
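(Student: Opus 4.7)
The plan is to verify directly the three defining conditions of dual minimal bases from Definition \ref{dualminbases}, since both parts reduce to straightforward algebraic checks once one uses properties of the Kronecker product to reduce (b) to (a).

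First I would dispose of the easy items for part (a). The dimension condition is immediate: $L_d(\la^\ell)$ is $d\times(d+1)$ and $\Lambda_d^\top(\la^\ell)$ is $1\times(d+1)$, so $d+1 = d+1$. The product condition $L_d(\la^\ell)\,\Lambda_d(\la^\ell)=0$ is a direct computation: the $i$th row of $L_d(\la^\ell)$ has $-1$ in column $i$ and $\la^\ell$ in column $i+1$, while the $j$th entry of $\Lambda_d(\la^\ell)$ is $\la^{(d+1-j)\ell}$, so the inner product equals $-\la^{(d+1-i)\ell}+\la^\ell\la^{(d-i)\ell}=0$.

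For the minimal basis conditions in Definition \ref{minbases}, I would treat each factor separately. Every row of $L_d(\la^\ell)$ has degree exactly $\ell$, and its highest row degree coefficient matrix is $[\,0\ \,|\ I_d\,]$, which has full row rank; moreover for any $\la_0\in\overline{\FF}$, the $d\times d$ leading principal submatrix of $L_d(\la_0^\ell)$ is upper triangular with $-1$'s on its diagonal, hence invertible, so $L_d(\la_0^\ell)$ has full row rank at every $\la_0$. For $\Lambda_d^\top(\la^\ell)$, the single row has degree $d\ell$, so the highest row degree coefficient matrix is $[1,0,\ldots,0]$, which has full row rank, and the last entry of $\Lambda_d^\top(\la_0^\ell)$ equals $1$ for every $\la_0\in\overline{\FF}$, so it is never the zero row. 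Thus both are minimal bases and (a) holds.

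For part (b), I would reduce everything to (a) via the mixed-product rule. The product condition becomes
\begin{equation*}
(L_d(\la^\ell)\otimes I_n)(\Lambda_d(\la^\ell)\otimes I_n) = (L_d(\la^\ell)\Lambda_d(\la^\ell))\otimes I_n = 0,
\end{equation*}
and the dimension check is $dn+n=(d+1)n$. For the minimal basis conditions, all rows of $L_d(\la^\ell)\otimes I_n$ still have degree $\ell$, and its highest row degree coefficient matrix is the Kronecker product of $[\,0\ \,|\ I_d\,]$ with $I_n$, which has full row rank because the Kronecker product of full row rank matrices has full row rank. The pointwise rank condition follows from $\mathrm{rank}\bigl(A(\la_0)\otimes I_n\bigr)=n\cdot\mathrm{rank}\,A(\la_0)$, which transfers full row rank at every $\la_0$ from $L_d(\la^\ell)$ to $L_d(\la^\ell)\otimes I_n$, and similarly for $\Lambda_d^\top(\la^\ell)\otimes I_n$.

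I do not anticipate any real obstacle: the proof is a direct verification. The only mildly subtle step is identifying the highest row degree coefficient matrix of the Kronecker-tensored objects, but this is unproblematic once one notices that all rows inherit the same degree $\ell$ (respectively $d\ell$) from the underlying scalar block.
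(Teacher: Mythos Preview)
Your proof is correct. The paper does not give its own proof of this lemma; it is simply quoted from \cite[Lemma~3.6]{dopico2019block}, so there is nothing to compare against beyond noting that your direct verification of the three conditions in Definition~\ref{dualminbases} (row-reducedness, full row rank at every point of $\overline{\FF}$, the product and dimension conditions) is exactly the expected argument, and your reduction of (b) to (a) via the mixed-product rule and the rank identity $\mathrm{rank}(A\otimes I_n)=n\cdot\mathrm{rank}\,A$ is standard and sound.
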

Notice that $L_d(\la^\ell)$ and $\Lambda_d^\top(\la^\ell)$ are matrix polynomials with constant row degrees $\ell$ and $\ell d$, respectively. The same holds for the matrix polynomials in Lemma \ref{dualminbasesell}-(b).

Theorem \ref{dualminbasesrev} establishes several properties of minimal bases. In particular, it is shown that there exists a direct relation between a minimal basis, whose row degrees are all equal, and its reversal. 

\begin{Teo}{\rm\cite[Th. 3.7]{dopico2018block}}\label{dualminbasesrev}
	The following statements hold.
	\begin{enumerate}[{\rm (a)}]
		\item Let $P(\la)$ be a minimal basis whose row degrees are all equal to $j$. Then ${\rm rev}_jP(\la)$ is also a minimal basis whose row degrees are all equal to $j$. 
		\item Let $P(\la)$ and $Q(\la)$ be dual minimal bases. If the row degrees of $P(\la)$ are all equal to $j$ and the row degrees of $Q(\la)$ are all equal to $i$, then ${\rm rev}_jP(\la)$ and ${\rm rev}_iQ(\la)$ are also dual minimal bases with all the row degrees of ${\rm rev}_jP(\la)$ equal to $j$ and all the row degrees of ${\rm rev}_iQ(\la)$ equal to $i$.
	\end{enumerate}	
\end{Teo}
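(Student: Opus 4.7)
The plan is to prove part (a) first and then derive part (b) from it with a short direct computation, relying on the fact that reversing a matrix polynomial whose rows all have the same degree $j$ swaps two constant matrices: the highest row degree coefficient matrix and the value of $P(\la)$ at $\la=0$. Both of these turn out to have full row rank when $P(\la)$ is a minimal basis, and this is essentially what makes the reversal a minimal basis as well.

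For part (a), I would first verify that all row degrees of ${\rm rev}_j P(\la)$ are still $j$. Writing the $i$th row of $P(\la)$ coefficient-wise as $p_{i,0}+p_{i,1}\la+\cdots+p_{i,j}\la^j$, the $i$th row of ${\rm rev}_j P(\la)$ is $p_{i,j}+p_{i,j-1}\la+\cdots+p_{i,0}\la^j$. The row degree of this row equals $j$ precisely when $p_{i,0}\neq 0$, and this holds because evaluating $P(\la)$ at $\la_0=0$ yields a matrix with $i$th row $p_{i,0}$, which has full row rank (so no row can vanish). Next, to check that ${\rm rev}_jP(\la)$ is row reduced, observe that its highest row degree coefficient matrix is precisely $P(0)$, which has full row rank since $P(\la)$ is a minimal basis. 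Finally, to check that ${\rm rev}_jP(\la_0)$ has full row rank for every $\la_0\in\overline{\FF}$: for $\la_0\ne 0$ we have ${\rm rev}_j P(\la_0)=\la_0^j P(1/\la_0)$, which shares its rank with $P(1/\la_0)$ and is therefore full-row-rank by hypothesis; for $\la_0=0$, the matrix ${\rm rev}_jP(0)$ equals the highest row degree coefficient matrix $P_h$ of $P(\la)$ (its $i$th row is $p_{i,j}$), which has full row rank because $P(\la)$ is row reduced. This gives part (a).

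For part (b), the three defining conditions of Definition \ref{dualminbases} can be checked one by one. Part (a) applied to both $P(\la)$ and $Q(\la)$ immediately shows that ${\rm rev}_jP(\la)$ and ${\rm rev}_iQ(\la)$ are minimal bases with uniform row degrees $j$ and $i$ respectively. The dimension condition $m_1+m_2=n$ is inherited, since reversal does not alter sizes. The orthogonality condition follows from a one-line computation:
\[
{\rm rev}_jP(\la)\cdot({\rm rev}_iQ(\la))^\top
=\la^jP(1/\la)\cdot\la^iQ^\top(1/\la)
=\la^{j+i}\bigl(P(1/\la)Q^\top(1/\la)\bigr)=0,
\]
using $P(\la)Q^\top(\la)=0$.

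I do not foresee a hard obstacle here; the whole argument rests on recognising that reversal exchanges the roles of the leading and trailing coefficient matrices, both of which are forced to have full row rank by the minimal basis hypothesis. The only point requiring a small amount of care is the uniform-row-degree assumption, which is essential: if the row degrees of $P(\la)$ were not all equal, then ${\rm rev}_jP(\la)$ would in general not be row reduced and the identifications of ${\rm rev}_jP(0)$ with $P_h$ and of the highest row degree coefficient matrix of ${\rm rev}_jP(\la)$ with $P(0)$ would both fail.
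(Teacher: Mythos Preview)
The paper does not actually prove this theorem; it is quoted verbatim as \cite[Th.~3.7]{dopico2018block} and used as a tool. So there is no in-paper proof to compare against. That said, your argument is correct and is essentially the standard one: the key observation that, under the uniform-row-degree hypothesis, reversal swaps $P(0)$ and $P_h$ is exactly what drives the proof in the cited reference as well. One very minor point of presentation: in part~(b), the identity ${\rm rev}_jP(\la)\,({\rm rev}_iQ(\la))^\top=\la^{j+i}P(1/\la)Q^\top(1/\la)$ is literally valid only for $\la\neq 0$, so you should remark that both sides are polynomials (the left-hand side manifestly, the right-hand side because it equals ${\rm rev}_{i+j}(PQ^\top)(\la)$), and hence the identity extends to all~$\la$; alternatively, just invoke ${\rm rev}_{i+j}(PQ^\top)={\rm rev}_jP\cdot{\rm rev}_iQ^\top$ directly.
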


\subsection{M\"obius transformations and $\boldsymbol{\mathbb{M}_A}$-structured matrix polynomials}\label{SectionMa}

M\"obius transformations have become a relevant tool in the theory of structured matrix polynomials. They have been used in the framework of matrix polynomials, at least, since $2006$ to relate structured matrix polynomials having different structures \cite{mackey2006structured}. More recently, in \cite{dopico2019structured} the authors showed that M\"obius transformations for matrix polynomials allow to provide a common framework for the most frequent classes of structured matrix polynomials, such as (skew-)symmetric, (skew-)Hermitian, (anti-)palindromic, and alternating polynomials. A thorough study on the influence of M\"obius transformations on relevant properties of general matrix polynomials over arbitrary fields has been carried out in \cite{mackey2015mobius}.
\begin{Def}\label{DefMaP}
{\rm
	Let $A = \left[\begin{smallarray}{cc} a & b \\ c & d \end{smallarray}\right] \in GL(2,\FF)$. The {\it M\"obius transformation} of $P(\la) := \sum_{j=0}^k \la^j P_j$ induced by $A$ is defined by
	\begin{equation*}
	\cM_A[P](\la) := \sum_{j=0}^k P_j(a\la + b)^j(c\la + d)^{k-j}.
	\end{equation*}
}
\end{Def}

In order to simplify the exposition, from now on we write $\cM_A[P] := \cM_A[P](\la)$. We will write the parameter $\la$ only when needed.

The following proposition includes general properties of M\"obius transformations that we will use throughout this work. They can be found, for instance, in \cite[Prop. 3.4 and Th. 3.5]{dopico2019structured}.

\begin{Prop}\label{PropersMaPs}
	For any $A,B \in GL(2,\FF)$ the following statements hold.
	\begin{enumerate}[{\rm (a)}]
		\item $\cM_A[C] = C$, for any $m \times n$ constant matrix $C$.
		\item $\cM_{\beta A}[P] = \beta^k \cM_A[P]$, for any $m \times n$ matrix polynomial $P(\la)$ of grade $k$ and any $\beta \in \FF$.
		\item $\cM_A[\beta P] = \beta \cM_A[P]$.
		\item $\cM_A[P+Q] = \cM_A[P]+\cM_A[Q]$, for any $m \times n$ matrix polynomials $P(\la)$ and $Q(\la)$ both of grade $k$.
		\item Let $P(\la)$ and $Q(\la)$ be two matrix polynomials of grades $k_1$ and $k_2$, respectively. If $P(\la)Q(\la)$ is defined, then $\cM_A[PQ] = \cM_A[P]\cM_A[Q]$, where $P(\la)Q(\la)$ is considered as a matrix polynomial of grade $k_1+k_2$. 
		\item If $Q(\la) = P(\la) \otimes I_n$, then $\cM_A[Q] = \cM_A[P]\otimes I_n$.
		\item $\cM_A[P^\top] = \cM_A[P]^\top$.
		\item If $\FF = \mathbb{C}$, then $\overline{\cM_A[P]} = \cM_{\overline{A}}[\overline{P}]$ and $\cM_A[P]^\ast = \cM_{\overline{A}}[P^\ast] = \cM_{\overline{A}}[\overline{P}]^\top$.
		\item M\"obius transformations act block-wise, i.e., $[\cM_A[P]]_{ij} = \cM_A[P_{ij}]$, for any row and column index sets $i$ and $j$, and where $[P(\la)]_{ij}$ has to be considered as a matrix polynomial with a grade equal to the grade of $P(\la)$.
		\item $\cM_B[\cM_A[P]] = \cM_{AB}[P]$.
		\item Let $P(\la)$ be a minimal basis whose row degrees are all equal to $j$. Then $\cM_A[P]$ is also a minimal basis whose row degrees are all equal to $j$. 
		\item Let $P(\la)$ and $Q(\la)$ be dual minimal bases. If the row degrees of $P(\la)$ are all equal to $j$ and the row degrees of $Q(\la)$ are all equal to $i$, then $\cM_A[P]$ and $\cM_A[Q]$ are also dual minimal bases with all the row degrees of $\cM_A[P]$ equal to $j$ and all the row degrees of $\cM_A[Q]$ equal to $i$.
	\end{enumerate}	
\end{Prop}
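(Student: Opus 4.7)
The proof is a catalog of twelve verifications, and my plan is to dispatch (a)--(h) by direct manipulation of the defining sum and then bootstrap (i)--(l) from them. For (a), a constant $C$ has grade $0$, so the sum reduces to the single term $j=0$, giving $C$. For (b), factoring $\beta^j$ out of $(\beta a\la + \beta b)^j$ and $\beta^{k-j}$ out of $(\beta c\la + \beta d)^{k-j}$ produces a uniform $\beta^k$. Parts (c) and (d) are the obvious linearity of the defining sum. For (e), expanding $\cM_A[P]\cM_A[Q]$ and regrouping by $m=i+j$ yields $\sum_m (PQ)_m (a\la+b)^m(c\la+d)^{k_1+k_2-m} = \cM_A[PQ]$, where $(PQ)_m = \sum_{i+j=m} P_iQ_j$. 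Parts (f)--(h) follow because $(\cdot)\otimes I_n$, transpose, and entrywise conjugation commute with scalar multiplication and addition, hence pass through the defining sum; conjugation additionally affects only the constants $a,b,c,d$, producing $\overline{A}$.

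For (i), extracting a sub-block of the coefficient matrices $P_0,\ldots,P_k$ is an $\FF$-linear operation that commutes with the defining sum, so the sub-block of $\cM_A[P]$ is $\cM_A$ applied to the corresponding sub-block of $P$. For (j), the composition identity $\cM_B[\cM_A[P]] = \cM_{AB}[P]$, the cleanest route is to verify it on the monomial $\la$ (as a grade-$1$ polynomial): direct computation gives $\cM_A[\la] = a\la+b$, then $\cM_B[a\la+b] = (aa'+bc')\la + (ab'+bd')$, which matches $\cM_{AB}[\la]$ read off the top row of the matrix product $AB$; part (e) then lifts the identity to all powers $\la^m$, and parts (c)--(d) extend it by linearity to arbitrary matrix polynomials.

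The main effort is (k). Given a minimal basis $P(\la)$ whose row degrees are all equal to $j$, I would verify both defining conditions on $\cM_A[P]$ using the identity
$$
\cM_A[P](\la_0) \;=\; (c\la_0+d)^j\, P\!\left(\frac{a\la_0+b}{c\la_0+d}\right) \qquad \text{whenever } c\la_0+d \neq 0,
$$
obtained by factoring $(c\la+d)^j$ out of the grade-$j$ defining sum. Pointwise full row rank at $\la_0 \in \overline{\FF}$ with $c\la_0+d \neq 0$ is immediate from the minimality of $P$. When $c\la_0+d = 0$ only the top term survives, so $\cM_A[P](\la_0) = (a\la_0+b)^j P_j$; here $a\la_0+b \neq 0$ because $\det A \neq 0$, and $P_j$ (the highest-row-degree coefficient matrix) has full row rank by hypothesis. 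Row-reducedness of $\cM_A[P]$ is the analog at infinity: when $c \neq 0$ the coefficient of $\la^j$ in $\cM_A[P]$ equals $c^j P(a/c)$ (full row rank by minimality of $P$), and when $c=0$ it equals $a^j P_j$ (full row rank because $ad=\det A \neq 0$ and $P_j$ is minimal). Each row degree of $\cM_A[P]$ is at most $j$ by construction and is exactly $j$ by the same computations. Finally, (l) is a corollary of (k), (e), and (g): $\cM_A[P]\cM_A[Q]^\top = \cM_A[P]\cM_A[Q^\top] = \cM_A[PQ^\top] = \cM_A[0] = 0$, the dimension constraint $m_1+m_2=n$ is unchanged, and the uniform row degrees are preserved by (k).

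The main obstacle is the case analysis hidden inside (k), in particular the boundary behavior at the unique $\la_0$ with $c\la_0+d=0$ and at infinity. A cleaner alternative I would employ if space permits is the homogeneous reformulation $\widehat{\cM}_A[P](\la,\mu) := \sum_j P_j(a\la+b\mu)^j(c\la+d\mu)^{k-j}$, which realizes $\cM_A$ as a pullback along the automorphism of $\mathbb{P}^1(\overline{\FF})$ induced by $A$; then preservation of pointwise rank, including at the point at infinity, becomes essentially tautological and the case split disappears.
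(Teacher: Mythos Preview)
Your proof proposal is correct and self-contained. The paper itself does not include a proof of this proposition; it simply states the result and cites an earlier reference (Prop.~3.4 and Th.~3.5 of the 2019 paper by Dopico and P\'erez on structured strong linearizations) where these facts are established. Thus there is no in-paper argument to compare against beyond that citation, and your write-up supplies considerably more than the paper does.

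One small tightening in your treatment of (j): to lift the composition identity to an arbitrary grade-$k$ polynomial via (e) and then (c)--(d), you need the identity on every grade-$k$ monomial $\la^m$ for $0 \le m \le k$. Such a monomial factors, through (e), as a product of $m$ copies of the grade-$1$ polynomial $\la$ and $k-m$ copies of the grade-$1$ constant polynomial $1$, and the latter satisfies $\cM_A[1] = c\la+d$. So the base case should be checked on both generators $\la$ and $1$ of the grade-$1$ space, not just on $\la$; the verification for $1$ recovers the bottom row of $AB$ exactly as the one for $\la$ recovers the top row. With that addition your bootstrap works cleanly, and the homogeneous reformulation you sketch at the end is indeed the standard device for making the case analysis in (k) disappear.
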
 

As mentioned before, the advantage of using M\"obius transformations is the possibility of unifying all the known structures of matrix polynomials in a general family. In particular, we are interested in the classes of (skew-)symmetric, (skew-)Hermitian, (anti-)palindromic, and alternating matrix polynomials (they were also studied in detail in \cite{dopico2019structured}). In the following definition, we show the conditions that define any of these structures, relating each of these conditions to the corresponding M\"obius transformation. 

\begin{Def}{\rm\cite[Def. 9.6]{mackey2015mobius}}\label{Structures-Mobius}
{\rm
	Let $A_1 = \left[\begin{smallarray}{cc} 1 & 0 \\ 0 & 1 \end{smallarray}\right]$, $A_2 = \left[\begin{smallarray}{rc} -1 & 0 \\ 0 & 1 \end{smallarray}\right]$, and $A_3 = \left[\begin{smallarray}{cc} 0 & 1 \\ 1 & 0 \end{smallarray}\right]$ (they all belong to $GL(2,\FF)$). An $n\times n$ matrix polynomial $P(\la)$ of grade $k$ is
	\begin{enumerate}[{\rm (a)}]
		\item {\it $\star$-symmetric} if $P^\star(\la) = P(\la) := \cM_{A_1}[P]$,
		\item {\it $\star$-skew-symmetric} if $P^\star(\la) = -P(\la) := -\cM_{A_1}[P]$,
		\item {\it $\star$-even} if $P^\star(\la) = P(-\la) := \cM_{A_2}[P]$,
		\item {\it $\star$-odd} if $P^\star(\la) = -P(-\la) = -\cM_{A_2}[P]$,
		\item {\it $\star$-palindromic} if $P^\star(\la) = {\rm rev}_k P(\la) := \cM_{A_3}[P]$, and
		\item {\it $\star$-anti-palindromic} if $P^\star(\la) = -{\rm rev}_k P(\la) := -\cM_{A_3}[P]$.
	\end{enumerate}
}
\end{Def}
The name {\it $\star$-alternating} is also used for both $\star$-even and $\star$-odd structures. A $*$-symmetric (respectively, $*$-skew-symmetric) matrix polynomial is usually known as {\it Hermitian} (resp., {\it skew-Hermitian}).

In \cite[Def. 3.6]{dopico2019structured}, in order to unify all the identities for the different structures appearing in Definition \ref{Structures-Mobius} in a unique identity, the authors introduced the notion of ``$\cM_A$-structured matrix polynomial" for matrix polynomials of odd grade $k$. To be precise, the matrix polynomial $P(\la)$ is $\cM_A$-structured if it satisfies the identity $\cM_A[P] = P^\star(\la)$. The $\star$-symmetric, $\star$-even, and $\star$-palindromic structures in Definition \ref{Structures-Mobius} satisfy this particular identity, for any grade of $P(\la)$, and for the appropriate matrix $A$. Instead, the $\star$-skew-symmetric, $\star$-odd, and $\star$-anti-palindromic structures satisfy the identity $-\cM_A[P]=P^\star(\la)$, for any grade $k$, which is equivalent to $\cM_A[-P]=P^\star(\la)$, by Proposition \ref{PropersMaPs}-(c).

Notice that, by Proposition \ref{PropersMaPs}-(b), for any odd grade $k$, the identity $\cM_A[-P] = \cM_{-A}[P]$ holds. This allowed the authors of \cite{dopico2019structured} to unify all the structures in Definition \ref{Structures-Mobius} via M\"obius transformations in a unique identity (i.e., $\cM_A[P] = P^\star(\la)$, for appropriate choices of $A$) valid for matrix polynomials of odd grade. The previous identity is no longer true for matrix polynomials of even grade. Therefore, in order to introduce an identity that unifies all the identities in Definition \ref{Structures-Mobius}, for any grade $k$ (even or odd) we need to consider both the $(+)$ and $(-)$ sign. This is done in the following definition, which extends the notion of $\cM_A$-structured matrix polynomial, introduced in \cite[Def. 3.6]{dopico2019structured} for matrix polynomials of odd grade, to matrix polynomials of any grade.

\begin{Def}\label{DefMaStructured}
{\rm
	Let $P(\la) = \sum_{j=0}^k \la^jP_j \in \FF[\la]^{n \times n}$, such that $k \in \mathbb{N}$, and let $A \in GL(2,\FF)$. Then, the matrix polynomial $P(\la)$ is said to be {\it $\cM_A$-structured} if it satisfies
	\begin{equation}\label{Ma-struct}
	\cM_A[P] = P^\star(\la),\ {\rm or}\ \cM_A[-P] = P^\star(\la).
	\end{equation}
}
\end{Def}

In Section \ref{KroneckerStructured} we consider the problem of finding $\ell$-ifications of an $\cM_A$-structured matrix polynomial, in a structure-preserving way, for all the structures in Definition \ref{Structures-Mobius}. Notice that the matrices $A_1$, $A_2$, and $A_3$ in Definition \ref{Structures-Mobius} are coninvolutory. This property will be relevant to guarantee that the constructions are indeed $\cM_A$-structured.

\section{($\boldsymbol{\cM_A}$-structured) block minimal bases matrix polynomials}\label{bmb-sec}

We first review the family of (strong) block minimal bases grade-$\ell$ matrix polynomials introduced in \cite{dopico2019block}. Most of the linearizations, quadratifications and, in general, $\ell$-ifications known so far belong to this wide family (up to permutations). Some examples can be found in \cite[Section 4.1]{dopico2019block}. 

\begin{Def}{\rm\cite[Def. 4.1]{dopico2019block}}\label{SBMB}
{\rm
	A grade-$\ell$ matrix polynomial of the form
	\begin{equation}\label{LBlockMinimal}
	\mathcal{L}(\la) = \begin{bmatrix} M(\la) & K_2^\top(\la) \\ K_1(\la) & 0 \end{bmatrix},
	\end{equation}
	where $M(\la)$ is an arbritrary grade-$\ell$ matrix polynomial, is called a {\it block minimal bases matrix polynomial} if $K_1(\la)$ and $K_2(\la)$ are both minimal bases. If, in addition, the row degrees of $K_1(\la)$ are all equal to $\ell$, the row degrees of $K_2(\la)$ are all equal to $\ell$, the row degrees of a minimal basis dual to $K_1(\la)$ are all equal to each other, and the row degrees of a minimal basis dual to $K_2(\la)$ are all equal to each other, then $\mathcal{L}(\la)$ is called a {\it strong block minimal bases degree-$\ell$ matrix polynomial}.
}
\end{Def}

The following theorem shows that every strong block minimal bases degree-$\ell$ matrix polynomial is always a strong $\ell$-ification of a certain matrix polynomial $P(\la)$. Furthermore, it is also shown that there exists a simple shift relation between the minimal indices of $\mathcal{L}(\la)$ and $P(\la)$.

\begin{Teo}{\rm\cite[Th. 4.2 and 6.2]{dopico2019block}}\label{SLification}
	Let $K_1(\la)$ and $N_1(\la)$ be dual minimal bases, and let $K_2(\la)$ and $N_2(\la)$ be other dual minimal bases. Consider the matrix polynomial
	\begin{equation}\label{Pgeneral}
	P(\la) := N_2(\la)M(\la)N_1^\top(\la),
	\end{equation}
	and the block minimal bases matrix polynomial $\mathcal{L}(\la)$ in \eqref{LBlockMinimal}. Then:
	\begin{enumerate}[{\rm (a)}]
		\item $\mathcal{L}(\la)$ is an $\ell$-ification of $P(\la)$.
		\item If $\mathcal{L}(\la)$ is a strong block minimal bases degree-$\ell$ matrix polynomial, then 
		\begin{enumerate}
			\item[{\rm (b1)}] $\mathcal{L}(\la)$ is a strong $\ell$-ification of $P(\la)$, when $P(\la)$ is considered as a polynomial with grade $\ell + \deg(N_1) + \deg(N_2)$,
			\item[{\rm (b2)}] if $0 \le \epsilon_1 \le \epsilon_2 \le \cdots \le \epsilon_p$ are the right minimal indices of $P(\la)$, then $\epsilon_1 + \deg(N_1) \le \epsilon_2 + \deg(N_1) \le \cdots \le \epsilon_p + \deg(N_1)$ are the right minimal indices of $\mathcal{L}(\la)$, and
			\item[{\rm (b3)}] if $0 \le \eta_1 \le \eta_2 \le \cdots \le \eta_q$ are the left minimal indices of $P(\la)$, then $\eta_1 + \deg(N_2) \le \eta_2 + \deg(N_2) \le \cdots \le \eta_q + \deg(N_2)$ are the left minimal indices of $\mathcal{L}(\la)$.
		\end{enumerate}
	\end{enumerate}
\end{Teo}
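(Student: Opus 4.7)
The plan is to deduce (a) by an explicit unimodular reduction, to pass from (a) to (b1) via reversal, and to handle (b2)--(b3) by explicit construction of minimal bases of the nullspaces of $\mathcal{L}(\la)$.

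For (a), I would use the classical fact that a minimal basis $K(\la)$ admits a unimodular completion: there exist matrix polynomials so that the row-stacked matrix $\bigl[\!\begin{smallmatrix} N_1 \\ K_1 \end{smallmatrix}\!\bigr]$ is, after an appropriate choice, unimodular, and dually its inverse splits as $[Y_1 \mid Z_1]$ respecting the dual pair. Together with the duality identities $K_1 N_1^\top = 0$ and $K_2 N_2^\top = 0$, these completions allow block-row and block-column operations on $\mathcal{L}(\la)$ that proceed in two symmetric stages: first exploit the bottom-left block $K_1(\la)$ and the unimodular completion associated with $(K_1,N_1)$ to produce an identity block that can be used to clean up the remainder of the matrix; then a symmetric manipulation on the rows using the completion of $(K_2,N_2)$ produces another identity. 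What remains isolated after these two reductions forms precisely $N_2 M N_1^\top = P(\la)$, with the two identity blocks absorbing into the $I_s$ term of Definition \ref{StrongLifications}.

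For (b1), under the strong hypothesis the row degrees of each $K_i$ are all equal to $\ell$ and those of $N_i$ are all equal to $\deg(N_i)$. Theorem \ref{dualminbasesrev}(b) then ensures that $\operatorname{rev}_\ell K_i$ and $\operatorname{rev}_{\deg N_i} N_i$ remain dual minimal bases with constant row degrees. Since $\operatorname{rev}_\ell \mathcal{L}(\la)$ equals the block minimal bases matrix polynomial built from $\operatorname{rev}_\ell M$ and the reversed $K_i$, applying part (a) to this reversed data shows it is an $\ell$-ification of $\operatorname{rev}_{\deg N_2} N_2 \cdot \operatorname{rev}_\ell M \cdot \operatorname{rev}_{\deg N_1} N_1^\top$. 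The standard identity for reversals of products of graded polynomials, $\operatorname{rev}_{a+b+c}(ABC) = \operatorname{rev}_a A \cdot \operatorname{rev}_b B \cdot \operatorname{rev}_c C$, then identifies this with $\operatorname{rev}_k P(\la)$ for $k = \ell + \deg(N_1) + \deg(N_2)$.

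For (b2) and (b3), I would lift a minimal basis of the right nullspace of $P(\la)$ to one for $\mathcal{L}(\la)$. Given $\{v_1,\ldots,v_p\}$ with ordered degrees $\epsilon_1 \le \cdots \le \epsilon_p$, observe that $N_2(M N_1^\top v_i) = P v_i = 0$, so $M N_1^\top v_i$ lies in the left nullspace of $N_2$, which by duality with $K_2$ is the column span of $K_2^\top$; hence there is a polynomial $y_i$ with $M N_1^\top v_i + K_2^\top y_i = 0$, and the stacked vector $w_i := \bigl[\!\begin{smallmatrix} N_1^\top v_i \\ y_i \end{smallmatrix}\!\bigr]$ lies in the right nullspace of $\mathcal{L}(\la)$ with degree exactly $\deg(v_i) + \deg(N_1)$. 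An entirely symmetric construction on the left, using $N_2$, gives (b3). The main obstacle is verifying that $\{w_1,\ldots,w_p\}$ is truly a \emph{minimal} basis, not merely a basis, and that the shift is exactly $\deg(N_1)$: this hinges on the equal-row-degree hypothesis built into the strong definition, which guarantees that $N_1^\top$ (respectively $N_2$) is row-reduced with all row degrees equal, so multiplication by it raises degrees uniformly and forbids hidden cancellations. Without this hypothesis one would only obtain inequalities, rather than the exact degree shifts in the statement.
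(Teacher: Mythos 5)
The paper does not prove Theorem~\ref{SLification}; it is quoted directly from \cite[Th.\ 4.2 and 6.2]{dopico2019block}, so there is no in-paper argument to compare your proposal against. What I can do is assess your proposal against the strategy used in that reference, which it does broadly follow: unimodular completion of the minimal bases for part~(a), reversal for~(b1), and a nullspace-lifting argument for~(b2)--(b3). There are, however, a few places where your plan is imprecise or has a genuine gap.

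For part~(a), the claim that ``the row-stacked matrix $\bigl[\begin{smallmatrix}N_1\\K_1\end{smallmatrix}\bigr]$ is, after an appropriate choice, unimodular'' is not correct as written: that stacked matrix is square but is certainly not unimodular for a generic dual minimal basis pair. The lemma you want (and the one used in \cite{dopico2019block}) is that for dual minimal bases $K_1$ and $N_1$ there exist auxiliary matrix polynomials $\widehat{K}_1$ and $\widehat{N}_1$ with
\begin{equation*}
\begin{bmatrix} K_1(\la) \\ \widehat{N}_1(\la) \end{bmatrix}^{-1} = \begin{bmatrix} \widehat{K}_1(\la)^\top & N_1(\la)^\top \end{bmatrix},
\end{equation*}
so it is $\bigl[\begin{smallmatrix}K_1\\\widehat{N}_1\end{smallmatrix}\bigr]$ that is unimodular, and $N_1^\top$ is the rightmost block of its inverse, not of the matrix itself. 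The two-stage row/column reduction you describe does go through once the lemma is stated correctly, and it is essentially the argument of \cite{dopico2019block}. Your reversal argument for~(b1), via Theorem~\ref{dualminbasesrev}(b) and $\operatorname{rev}_{a+b+c}(ABC) = (\operatorname{rev}_a A)(\operatorname{rev}_b B)(\operatorname{rev}_c C)$, is sound as stated. Also a small terminology slip: $MN_1^\top v_i$ lies in the \emph{right} nullspace of $N_2$, not the left nullspace.

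The real gap is in the minimality claim of~(b2)--(b3), which you correctly flag as ``the main obstacle'' but then do not close. Three things are missing. First, you show that every nullspace vector $v_i$ of $P$ lifts to a nullspace vector $w_i$ of $\mathcal{L}$, but you do not show the converse: that \emph{every} polynomial right-nullspace vector $\bigl[\begin{smallmatrix}w^{(1)}\\w^{(2)}\end{smallmatrix}\bigr]$ of $\mathcal{L}$ is of this form. This follows from $K_1 w^{(1)} = 0$ (so $w^{(1)} = N_1^\top v$ for a unique polynomial $v$, by the full-rank-everywhere property of $N_1$), and from the fact that $K_2^\top$ has full column rank over $\FF(\la)$ (forcing $w^{(2)} = y_v$); without this surjectivity, you have only produced $p$ independent nullspace vectors, not a basis of the full module. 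Second, you argue that $\deg(N_1^\top v_i) = \deg(v_i) + \deg(N_1)$ exactly (correct, from row-reducedness of $N_1$ with constant row degrees), but you say nothing about bounding $\deg(y_i)$. You need $\deg(y_i) \le \deg(v_i) + \deg(N_1)$, which follows from $K_2$ being row-reduced with all row degrees equal to $\ell$ (so $K_2^\top$ is column-reduced with constant column degree $\ell$, and Forney's predictable-degree property applies to $K_2^\top y_i = -MN_1^\top v_i$). Without that bound, $\deg(w_i)$ could exceed $\deg(v_i) + \deg(N_1)$ and the shift formula would fail. Third, once (i) the parametrization $v \mapsto w_v$ is shown to be an $\FF[\la]$-linear bijection between nullspace modules and (ii) it is shown to shift degrees by \emph{exactly} $\deg(N_1)$ on every element, you then still need to observe that such a map carries minimal bases to minimal bases; your remark about ``forbidding hidden cancellations'' gestures at this but does not constitute a proof. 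All three pieces are present in \cite{dopico2019block}, and all three are needed.
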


In the following remark, we show the relation between the grade and size of the matrix polynomial $P(\la)$ as in \eqref{Pgeneral} and its strong $\ell$-ification $\mathcal{L}(\la)$ as in \eqref{LBlockMinimal} when $P(\la)$ is square.

\begin{Rem}\label{RemGeneral}
{\rm
	Assume that $P(\la):= N_2(\la)M(\la)N_1^\top(\la)$ as in \eqref{Pgeneral} has size $n \times n$ and grade $k = \ell + \deg(N_1) + \deg(N_2):= \ell + \ell_1 + \ell_2$. Let $N_1(\la) \in \FF[\la]^{n\times (n+m_1)}$ and $N_2(\la) \in \FF[\la]^{n \times (n+m_2)}$, so the minimal bases dual to $N_1(\la)$ and $N_2(\la)$ are of the form $K_1(\la) \in \FF[\la]^{m_1 \times (n+m_1)}$ and $K_2(\la) \in \FF[\la]^{m_2 \times (n+m_2)}$, respectively, by Definition {\rm\ref{dualminbases}}-(b). In addition, it must be $\ell m_1 = \ell_1 n$ and $\ell m_2 = \ell_2 n$ (because $K_1(\la)$, $N_1(\la)$, and $K_2(\la)$, $N_2(\la)$, respectively, are dual minimal bases {\rm\cite[Lemma $3.6$-(b)]{de2015matrix}}). Now, adding up these two identities and using $k = \ell + \ell_1 + \ell_2$, we get 
	\begin{equation}\label{sizeLgeneral}
	\ell(m_1 + m_2) = (\ell_1 + \ell_2)n = (k-\ell)n \Leftrightarrow n + m_1 + m_2 = \frac{nk}{\ell}.
	\end{equation}
	Therefore, $\ell$ is a divisor of the product ${\rm size}(P)\cdot {\rm grade}(P)$, for every $\mathcal{L}(\la)$ and $P(\la)$ as in Theorem {\rm\ref{SLification}}. By part {\rm (b1)} in Theorem {\rm\ref{SLification}}, a strong block minimal bases degree-$\ell$ matrix polynomial $\mathcal{L}(\la)$ as in \eqref{LBlockMinimal} is a strong $\ell$-ification of $P(\la)$ and it has size $n+m_1+m_2 = \frac{nk}{\ell}$, by \eqref{sizeLgeneral}.
}
\end{Rem} 

The following definition is a generalization of \cite[Def. 4.3]{dopico2019structured} to grade-$\ell$ matrix polynomials.

\begin{Def}\label{StructSBMB}
{\rm
	Let $K(\la)$ and $N(\la)$ be dual minimal bases, with all the row degrees of $K(\la)$ equal to $\ell$ and with all the row degrees of $N(\la)$ equal to each other, and let $A \in GL(2,\FF)$ be a coninvolutory matrix. Then, an $\cM_A$-structured grade-$\ell$ matrix polynomial of the form 
	\begin{equation}\label{StructLSBlock}
	\begin{split}
	\mathcal{L}(\la) &= \begin{bmatrix} M(\la) & \cM_A[\pm K]^\star \\ K(\la) & 0 \end{bmatrix} = \begin{bmatrix} M(\la) & \cM_{\overline{A}}[\pm \overline{K}]^\top \\ K(\la) & 0 \end{bmatrix} \ {\rm with\ } \cM_A[\pm M] = M^\star(\la),
	\end{split}
	\end{equation}
	is called an {\it $\cM_A$-structured strong block minimal bases degree-$\ell$ matrix polynomial}.
}
\end{Def}

Notice that $\mathcal{L}(\la)$ as in \eqref{StructLSBlock} satisfies the identity $\cM_A[\pm \mathcal{L}] = \mathcal{L}^\star(\la)$ (Definition \ref{DefMaStructured}) if and only if $A$ is coninvolutory. In particular, 
\begin{equation*}
\begin{array}{ccl}
\cM_A[\pm \mathcal{L}] &:=& \begin{bmatrix} \cM_A[\pm M] & \cM_A\left[\pm \cM_{A}[\pm K]^\star\right] \\ \cM_A[\pm K] & 0 \end{bmatrix} = \begin{bmatrix} \cM_A[\pm M] & \cM_A\left[\pm \cM_{\overline{A}}[\pm \overline{K}]^\top\right] \\ \cM_A[\pm K] & 0 \end{bmatrix} \\
&=& \begin{bmatrix} \cM_A[\pm M] & \cM_A\left[\pm \cM_{\overline{A}}[\pm \overline{K}]\right]^\top \\ \cM_A[\pm K] & 0 \end{bmatrix} = \begin{bmatrix} \cM_A[\pm M] & \cM_A\left[\cM_{\overline{A}}[\overline{K}]\right]^\top \\ \cM_A[\pm K] & 0 \end{bmatrix} \\
&=& \begin{bmatrix} \cM_A[\pm M] & \cM_{\overline{A}A}[\overline{K}]^\top \\ \cM_A[\pm K] & 0 \end{bmatrix} = \begin{bmatrix} \cM_A[\pm M] & \overline{K}^\top(\la) \\ \cM_A[\pm K] & 0 \end{bmatrix} = \begin{bmatrix} M^\star(\la) & K^\star(\la) \\ \cM_{A}[\pm K]  & 0 \end{bmatrix} =: \mathcal{L}^\star(\la).
\end{array}
\end{equation*}
To get the previous identity it is important to note that the $\pm$ sign in $\pm \mathcal{L}(\la)$ at the begining is in accordance with the $\pm$ sign that appears in \eqref{StructLSBlock}. For this reason, the signs cancel out. 

Therefore, any $\cM_A$-structured strong block minimal bases degree-$\ell$ matrix polynomial is, as the name suggests, $\cM_A$-structured.

As a corollary of Theorem \ref{SLification}, any $\cM_A$-structured strong block minimal bases degree-$\ell$ matrix polynomial is always a strong $\ell$-ification of a certain $\cM_A$-structured matrix polynomial. This is a straightforward extension of \cite[Th. 4.5]{dopico2019structured} for grade-$\ell$ matrix polynomials.

\begin{Teo}\label{StructLification}
	Let $K(\la)$ and $N(\la)$ be dual minimal bases, with all the row degrees of $K(\la)$ equal to $\ell$ and with all the row degrees of $N(\la)$ equal to each other. Let $A \in GL(2,\FF)$ be a coninvolutory matrix, and let $\mathcal{L}(\la)$ be an $\cM_A$-structured strong block minimal bases degree-$\ell$ matrix polynomial as in \eqref{StructLSBlock}. Then, the matrix polynomial $\mathcal{L}(\la)$ is a strong $\ell$-ification of the $\cM_A$-structured matrix polynomial
	\begin{equation}\label{StructPlambda}
	P(\la) := \left(\cM_{\overline{A}}[\pm \overline{N}]MN^\top\right)(\la),
	\end{equation}
	of grade $\ell + 2\deg(N)$. Moreover,
	\begin{itemize}
		\item if $0 \le \epsilon_1 \le \epsilon_2 \le \cdots \le \epsilon_p$ are the right minimal indices of $P(\la)$, then $\epsilon_1 + \deg(N) \le \epsilon_2 + \deg(N) \le \cdots \le \epsilon_p + \deg(N)$ are the right minimal indices of $\mathcal{L}(\la)$, and
		\item if $0 \le \eta_1 \le \eta_2 \le \cdots \le \eta_p$ are the left minimal indices of $P(\la)$, then $\eta_1 + \deg(N) \le \eta_2 + \deg(N) \le \cdots \le \eta_p + \deg(N)$ are the left minimal indices of $\mathcal{L}(\la)$.
	\end{itemize}
\end{Teo}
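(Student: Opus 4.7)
The plan is to apply Theorem~\ref{SLification} to the matrix polynomial $\mathcal{L}(\la)$ given in \eqref{StructLSBlock} by identifying the blocks in \eqref{LBlockMinimal} as $K_1(\la) := K(\la)$ and $K_2(\la) := \cM_{\overline{A}}[\pm \overline{K}](\la)$, and taking as their respective dual minimal bases $N_1(\la) := N(\la)$ and $N_2(\la) := \cM_{\overline{A}}[\pm \overline{N}](\la)$. The pair $(K_1,N_1)$ is dual by hypothesis. To check that $(K_2,N_2)$ is again a pair of dual minimal bases, with all row degrees of $K_2$ equal to $\ell$ and all row degrees of $N_2$ equal to one another, I invoke Proposition~\ref{PropersMaPs}(l), which says M\"obius transformations preserve dual minimal bases and their row-degree patterns, together with the immediate fact that complex conjugation acts entrywise and therefore also preserves minimality, duality, and row degrees. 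With these identifications, $\mathcal{L}(\la)$ is exactly a strong block minimal bases degree-$\ell$ matrix polynomial in the sense of Definition~\ref{SBMB}.

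Theorem~\ref{SLification}(b1) then immediately yields that $\mathcal{L}(\la)$ is a strong $\ell$-ification of
\[
P(\la) := N_2(\la)\, M(\la)\, N_1^\top(\la) = \cM_{\overline{A}}[\pm \overline{N}](\la)\, M(\la)\, N^\top(\la),
\]
regarded as a polynomial of grade $\ell + \deg(N_1) + \deg(N_2)$. Because M\"obius transformations with $A \in GL(2,\FF)$ and entrywise conjugation preserve degree, $\deg(N_2)=\deg(N)$, so the grade equals $\ell + 2\deg(N)$ as claimed. The shift relations for the right and left minimal indices are then a direct translation of parts (b2)--(b3) of Theorem~\ref{SLification} with $\deg(N_1)=\deg(N_2)=\deg(N)$.

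It remains to verify that the $P(\la)$ produced in this way is itself $\cM_A$-structured, i.e., $\cM_A[\pm P] = P^\star$. The plan for this step is a direct computation using Proposition~\ref{PropersMaPs}: the product rule (e) distributes $\cM_A$ across the three factors, the composition rule (j) converts $\cM_A \circ \cM_{\overline{A}}$ into $\cM_{\overline{A}A}$, and the coninvolutory hypothesis $\overline{A}A = I_2$ (combined with the observation that $\cM_{I_2}$ is the identity map) collapses $\cM_A[\pm\cM_{\overline{A}}[\pm \overline{N}]]$ to $\overline{N}$. Feeding in the assumption $\cM_A[\pm M] = M^\star$ on the middle factor, and using parts (g) and (h) to move transposes and conjugates past $\cM_A$, reassembles exactly $P^\star(\la)$; the sign bookkeeping is the same squared-sign cancellation already displayed in the verification, right after \eqref{StructLSBlock}, that $\mathcal{L}(\la)$ itself is $\cM_A$-structured.

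I expect the main obstacle to be nothing more than careful bookkeeping: tracking the $\pm$ signs across a triple product (so that $\sigma^{2}=1$ absorbs the outer signs with the inner one on $M$), and handling the real case ($\star=\top$, $\overline{A}=A$) and the complex case ($\star=*$) uniformly through Proposition~\ref{PropersMaPs}(g)--(h). No new ideas beyond those already packaged in Theorem~\ref{SLification} and Proposition~\ref{PropersMaPs} are required.
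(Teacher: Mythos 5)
Your proposal is correct and matches the paper's own argument step for step: reduce to Theorem~\ref{SLification} by identifying $K_1=K$, $N_1=N$, $K_2=\cM_{\overline A}[\pm\overline K]$, $N_2=\cM_{\overline A}[\pm\overline N]$, check that the second pair is still dual minimal bases with the same constant row degrees, read off (b1)--(b3), and then verify $\cM_A$-structure of $P$ by the product/composition rules of Proposition~\ref{PropersMaPs} together with $\overline A A=I_2$. (Minor point: you cite part (l) of Proposition~\ref{PropersMaPs} where the paper cites (k); your choice is in fact the more directly applicable one, since (l) packages both minimality and duality at once.)
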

Although the proof is very similar to the one of \cite[Th. 4.5]{dopico2019structured}, we include it here for completeness.

{\em Proof (of Theorem {\em \ref{StructLification}})}.
	Notice that $K(\la)N^\top(\la) = 0$ implies $\overline{K}(\la)\overline{N}^\top(\la) = 0$. Then, we have that $\pm \overline{K}(\la)$ and $\pm \overline{N}(\la)$ are dual minimal bases with all the row degrees of $\pm \overline{K}(\la)$ equal to $\ell$, and all the row degrees of $\pm \overline{N}(\la)$ equal to $\deg(N)$. Then, by Proposition \ref{PropersMaPs}-(k), we conclude that $\cM_{\overline{A}}[\pm \overline{K}]$ and $\cM_{\overline{A}}[\pm \overline{N}]$ are also dual minimal bases with all the row degrees of $\cM_{\overline{A}}[\pm \overline{K}]$ equal to $\ell$, and all the row degrees of $\cM_{\overline{A}}[\pm \overline{N}]$ equal to $\deg(N)$. Therefore, $\mathcal{L}(\la)$ is a strong block minimal bases degree-$\ell$ matrix polynomial. By Theorem \ref{SLification}, we immediately obtain that $\mathcal{L}(\la)$ is a strong $\ell$-ification of $P(\la)$ and that the minimal indices of $\mathcal{L}(\la)$ are those of $P(\la)$ shifted by $\deg(N)$. We still have to show that $P(\la)$ is $\cM_A$-structured, that is, $\cM_A[\pm P] = P^\star(\la)$. Computing the M\"obius transformation of $P(\la)$ associated to the matrix $A$ and using that $A$ is coninvolutory, together with parts {\rm (c)}, {\rm (e)}, {\rm (g)}, {\rm (h)}, {\rm (j)} in Proposition \ref{PropersMaPs}, we get 
	\begin{equation*}
	\begin{array}{ccl}
	\cM_A[\pm P] &=& \cM_A\left[\pm \cM_{\overline{A}}[\pm \overline{N}]MN^\top\right] = \cM_A\left[\cM_{\overline{A}}[\pm \overline{N}]\right]\cM_A\left[\pm M\right]\cM_A\left[N^\top\right] \\
	&=& \pm \overline{N} M^\star \cM_A[N]^\top = (N^\top)^\star M^\star \cM_{\overline{A}}[\pm \overline{N}]^\star = P^\star(\la). ~~~ \cvd
	\end{array} 
	\end{equation*} 

For structured matrix polynomials, we rewrite Remark \ref{RemGeneral} as follows.

\begin{Rem}\label{RemStruct}
{\rm
	Let $P(\la) \in \FF[\la]^{n \times n}$ be a matrix polynomial of grade $k=\ell + 2\deg(N):=\ell + 2\ell_1$ defined as in \eqref{StructPlambda}, where $N(\la) \in \FF[\la]^{n \times (n+m)}$ and $K(\la) \in \FF[\la]^{m \times (n+m)}$ are dual minimal bases as in Theorem \ref{StructLification}. Note that this implies $k-\ell = 2\ell_1$, so $k$ and $\ell$ (the grades of $P(\la)$ as in \eqref{StructPlambda} and $\mathcal{L}(\la)$ as in \eqref{StructLSBlock}, respectively) have the same parity. Then, $\ell m = \ell_1 n$, by {\rm\cite[Lemma $3.6$-(b)]{de2015matrix}}, and replacing $\ell_1 = \frac{k-\ell}{2}$ in this identity, we get
	\begin{equation}\label{sizeLstruct}
	\ell m = \frac{k-\ell}{2}n \Leftrightarrow n + 2m = \frac{nk}{\ell}.
	\end{equation}
	Now, $\mathcal{L}(\la) \in \FF[\la]^{(n+2m) \times (n+2m)}$ as in \eqref{StructLSBlock} is a strong $\ell$-ification of $P(\la)$ by Theorem {\rm\ref{StructLification}}, and, by \eqref{sizeLstruct}, it has size $n+2m = \frac{nk}{\ell}$.
}
\end{Rem}

Notice that, in order to look for a unified framework of $\cM_A$-structured matrix polynomials, the grade parity has to be taken into account, as a consequence of Remark \ref{RemStruct}. Therefore, we will analyze both cases on the grade parity separately (namely, $k,\ell$ even and $k,\ell$ odd) when necessary.

The following theorem is a natural extension of \cite[Th. 4.6]{dopico2019structured} to grade-$\ell$ matrix polynomials. Now, we consider the inverse problem, that is, given an $\cM_A$-structured matrix polynomial $P(\la)$, we show how to construct a structure-preserving strong $\ell$-ification of $P(\la)$. This is, actually, the relevant problem we are interested in, since usually the given input is the matrix polynomial $P(\la)$. Recall that the grades of $P(\la)$ and the strong $\ell$-ification must have the same parity, by Remark \ref{RemStruct}.

\begin{Teo}\label{Ma-strucstrong}
	Let $K(\la)$ and $N(\la)$ be dual minimal bases, with all the row degrees of $K(\la)$ equal to $\ell$ and with all the row degrees of $N(\la)$ equal to each other, and let $A \in GL(2,\FF)$ be a coninvolutory matrix. Let $P(\la)$ be a given $\cM_A$-structured matrix polynomial of grade $k:=\ell + 2\deg(N)$. If $M(\la)$ is any solution of the equation \eqref{StructPlambda} (not necessarily $\cM_A$-structured), then the grade-$\ell$ matrix polynomial
	\begin{equation*}
	\widetilde{M}(\la):= \frac{1}{2}\left(M + \cM_A[\pm M]^\star\right)(\la)
	\end{equation*}
	is an $\cM_A$-structured solution of \eqref{StructPlambda}, and the $\cM_A$-structured strong block minimal bases degree-$\ell$ matrix polynomial
	\begin{equation}\label{MaL1/2}
	\mathcal{L}(\la) := \begin{bmatrix} \widetilde{M}(\la) & \cM_A[\pm K]^\star \\ K(\la) & 0 \end{bmatrix} = \begin{bmatrix} \frac{1}{2}\left(M + \cM_A[\pm M]^\star\right)(\la) & \cM_A[K]^\star \\ K(\la) & 0 \end{bmatrix}
	\end{equation}
	is a strong $\ell$-ification of $P(\la):= \left(\cM_{\overline{A}}[\pm \overline{N}]\widetilde{M}N^\top\right)(\la)$. 
\end{Teo}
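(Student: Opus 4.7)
The plan is to establish two facts about $\widetilde{M}(\la)$: (i) that it is $\cM_A$-structured, and (ii) that it still solves \eqref{StructPlambda}. The claim about $\mathcal{L}(\la)$ then follows directly from Theorem \ref{StructLification} applied with $\widetilde{M}$ in place of $M$, since \eqref{MaL1/2} is exactly the template of Definition \ref{StructSBMB} with $\widetilde{M}$ as its $(1,1)$ block. The construction $\widetilde{M} = \frac{1}{2}(M + \cM_A[\pm M]^\star)$ is the natural symmetrization with respect to the structure, mirroring the classical trick $\frac{1}{2}(M + M^\star)$; implicit in the factor $\frac{1}{2}$ is the assumption ${\rm char}(\FF) \neq 2$.

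The structural identity $\cM_A[\pm \widetilde{M}] = \widetilde{M}^\star$ reduces, by linearity of $\cM_A$ (Proposition \ref{PropersMaPs}(c),(d)), to the single identity $\cM_A[\pm \cM_A[\pm M]^\star] = M^\star$. I would verify this by first pushing $\star$ through the inner $\cM_A$ using parts (g)/(h) of Proposition \ref{PropersMaPs}, which converts the outer transformation into the composition $\cM_A \circ \cM_{\overline{A}}$; then applying part (j) to rewrite this composition as $\cM_{\overline{A}A}$; and finally observing that $\overline{A}A = I$ by coninvolutority, so $\cM_{\overline{A}A} = \cM_I$ is the identity on matrix polynomials. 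The two independent $\pm$ signs cancel. A direct expansion of $\widetilde{M}^\star = \frac{1}{2}(M^\star + \cM_A[\pm M])$ then matches $\cM_A[\pm \widetilde{M}]$ term by term.

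For (ii), bilinearity gives $\cM_{\overline{A}}[\pm \overline{N}]\widetilde{M}N^\top = \frac{1}{2}\bigl(P + \cM_{\overline{A}}[\pm \overline{N}]\cM_A[\pm M]^\star N^\top\bigr)$, so it suffices to identify the cross term with $P$. Rewriting $N^\top = \cM_{\overline{A}}[\cM_A[N]^\top]$ (once more via coninvolutority) and pulling the three factors inside a single $\cM_{\overline{A}}$ by multiplicativity (Proposition \ref{PropersMaPs}(e)), the cross term becomes $\cM_{\overline{A}}\bigl[\pm \overline{N} \cdot M^\star \cdot \cM_A[N]^\top\bigr]$; the bracketed expression is precisely the formula for $P^\star$ one obtains by applying $\star$ to the identity $P = \cM_{\overline{A}}[\pm \overline{N}]MN^\top$. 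Hence the cross term equals $\cM_{\overline{A}}[P^\star]$, which collapses to $P$ using the $\cM_A$-structure of $P$ together with one final application of $\cM_A \circ \cM_{\overline{A}} = {\rm id}$. Averaging recovers $P$, and Theorem \ref{StructLification} then yields the desired strong $\ell$-ification. The only real obstacle is bookkeeping: two independent $\pm$ signs (one from the $\cM_A$-structure of $P$, one from the factor $\cM_{\overline{A}}[\pm \overline{N}]$) interact with the dichotomy $\star \in \{\top, \ast\}$ governing whether swapping $\star$ past $\cM_A$ conjugates $A$ or not, and the coninvolutority of $A$ is exactly what forces every resulting combination of signs and conjugations to collapse at the correct step.
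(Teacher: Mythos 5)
Your proposal is correct and takes essentially the same approach as the paper's proof: you reduce to checking (i) that $\widetilde{M}$ is $\cM_A$-structured, via the coninvolutority collapse $\cM_A \circ \cM_{\overline{A}} = \mathrm{id}$, and (ii) that $\cM_A[\pm M]^\star$ is again a solution of \eqref{StructPlambda}, which together with the affine-combination observation gives that $\widetilde{M}$ solves \eqref{StructPlambda}; the paper proves these two facts in the opposite order and for (ii) unrolls the identity $P = \cM_A[\pm P]^\star$ rather than rolling up the cross term, but the computation is the same. Your explicit remark that the factor $\tfrac{1}{2}$ tacitly requires $\operatorname{char}(\FF)\neq 2$ is a small point the paper leaves unstated.
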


\begin{proof}
	Since $M(\la)$ is, by hypothesis, a solution of \eqref{StructPlambda}, in order to show that $\widetilde{M}(\la)$ is a solution of \eqref{StructPlambda} it suffices to show that $\cM_A[\pm M]^\star$ is also a solution of \eqref{StructPlambda}, since any affine combination of solutions of \eqref{StructPlambda} is also a solution of \eqref{StructPlambda}. Using that $P(\la)$ is $\cM_A$-structured, together with the fact that $A$ is coninvolutory, as well as parts {\rm (c)}, {\rm (e)}, {\rm (g)}, {\rm (h)}, {\rm (j)} in Proposition \ref{PropersMaPs}, we have
	\begin{equation*}
	\begin{split}
	P(\la) =& \cM_A[\pm P]^\star = \cM_A\left[\pm \cM_{\overline{A}}[\pm \overline{N}]MN^\top\right]^\star = \left(\pm \overline{N}\cM_A[\pm M] \cM_A[N]^\top\right)^\star \\
	=& \left( \overline{N}\cM_A[\pm M] \cM_A[\pm N]^\top\right)^\star =  \cM_{\overline{A}}[\pm \overline{N}]\cM_A[\pm M]^\star N^\top.
	\end{split}
	\end{equation*}
	Furthermore, notice that the matrix polynomial $\widetilde{M}(\la)$ is $\cM_A$-structured, because
	\begin{equation*}
	\cM_A[\pm \widetilde{M}] := \cM_A\left[\pm \frac{1}{2}\left(M + \cM_A[\pm M]^\star\right)\right] = \frac{1}{2}\left(\cM_A[\pm M] + M^\star\right) =: \widetilde{M}^\star(\la),
	\end{equation*}
	then $\mathcal{L}(\la)$ as in \eqref{MaL1/2} is an $\cM_A$-structured strong block minimal bases degree-$\ell$ matrix polynomial.
\end{proof}

It is important to mention that we can always construct an $\cM_A$-structured strong block minimal bases degree-$\ell$ matrix polynomial as in \eqref{MaL1/2}, which is a strong $\ell$-ification of a certain matrix polynomial, not necessarily $\cM_A$-structured. However, to guarantee that it is a strong $\ell$-ification of $P(\la) := \left(\cM_{\overline{A}}[\pm \overline{N}]\widetilde{M}N^\top\right)(\la)$, then $P(\la)$ must be $\cM_A$-structured (see the proof of Theorem \ref{Ma-strucstrong}). We refer the reader to Example \ref{ExamDuplic} (at the end of the example) for more details.

Theorem \ref{Ma-strucstrong} provides a way to construct a family of $\cM_A$-structured strong $\ell$-ifications of a given $\cM_A$-structured matrix polynomial $P(\la)$. However, this family depends on two ingredients: {\rm (a)} a matrix polynomial $M(\la)$, which is a solution of \eqref{StructPlambda}, and {\rm (b)} a pair of dual minimal bases satisfying the conditions of the statement of Theorem \ref{Ma-strucstrong}. In the next section, we will show how to obtain such an $M(\la)$ from the coefficients of $P(\la)$ using some particular dual minimal bases (namely, ``block-Kronecker dual minimal bases"). This will give us an $\cM_A$-structured generalized companion $\ell$-ification of any given $\cM_A$-structured matrix polynomial of grade $k$. The construction is valid for $k=s \ell$, with $s$ being an odd number.

\section{$\boldsymbol{\cM_A}$-structured block-Kronecker matrix polynomials}\label{KroneckerStructured}

We focus on the problem of constructing explicitly structure-preserving strong $\ell$-ifications for all structures in Definition \ref{Structures-Mobius}. In particular, we will construct them from a subfamily of $\cM_A$-structured strong block minimal bases degree-$\ell$ matrix polynomials (see Definition \ref{StructBK}).

In \cite[Def. 5.6]{dopico2019block}, the authors introduced the notion of {\it $(\epsilon,n,\eta,m)$-block-Kronecker degree-$\ell$ matrix polynomial}. We adapt this definition, and the corresponding results, to square matrix polynomials. Then, an {\it $(\epsilon,n,\epsilon,n)$-block-Kronecker degree-$\ell$ matrix polynomial} is of the form
\begin{equation*}
\mathcal{L}(\la) := \begin{bmatrix} M(\la) & L_\epsilon^\top(\la^\ell) \otimes I_n \\ L_\epsilon(\la^\ell) \otimes I_n & 0 \end{bmatrix}.
\end{equation*}
It is proved in \cite[Th. 5.7]{dopico2019block} that any $(\epsilon,n,\epsilon,n)$-block-Kronecker degree-$\ell$ matrix polynomial is a strong $\ell$-ification of the matrix polynomial 
\begin{equation}\label{recoverpoly}
P(\la) = (\Lambda_\epsilon^\top(\la^\ell) \otimes I_n) M(\la) (\Lambda_\epsilon(\la^\ell) \otimes I_n)
\end{equation}
of grade $(2\epsilon + 1)\ell$. Notice that this family of matrix polynomials is a subfamily of the strong block minimal bases matrix polynomials (Definition \ref{SBMB}). Furthermore, this family generalizes the block-Kronecker pencils in \cite{dopico2018block} from $\ell=1$ to any degree $\ell$. The main advantage of using this subfamily for constructing strong $\ell$-ifications is that it is very easy to construct the $(1,1)$ big block $M(\la)$ in order to recover the given matrix polynomial $P(\lambda)$ as in \eqref{recoverpoly}, due to the choice of the specific block-Kronecker dual minimal bases (Theorems \ref{NoStructLlambdaTeo} and \ref{StructLgeneral}). In particular, we only have to look for an appropriate matrix polynomial $M(\la)$ that allows us to recover the polynomial $P(\lambda)$ in \eqref{recoverpoly}, to construct block-Kronecker degree-$\ell$ matrix polynomials. However, since we are also looking for structure-preserving strong $\ell$-ifications, we introduce, in Definition \ref{StructBK}, the notion of ``$\cM_A$-structured block-Kronecker degree-$\ell$ matrix polynomial", extending the one in \cite[Def. 5.1]{dopico2019structured}, valid for matrix pencils. This is a subfamily of the $\cM_A$-structured strong block minimal bases degree-$\ell$ matrix polynomials in Definition \ref{StructSBMB} obtained when particularizing the minimal basis $K(\la)$ to be of the form $L_d(\la^\ell) \otimes I_n$, with $L_d(\la)$ being a Kronecker pencil as in \eqref{Ld}.

\begin{Def}\label{StructBK}
{\rm
	Let $L_d(\la) \in \FF[\la]^{d \times (d+1)}$ be the matrix pencil defined in \eqref{Ld}, and let $A \in GL(2,\FF)$ be a coninvolutory matrix. Then a degree-$\ell$ matrix polynomial of the form 
	\begin{equation}\label{StructLSBlockKronecker}
	\mathcal{L}(\la) = \begin{bmatrix} M(\la) & \cM_A[\pm L_d]^\top(\la^\ell) \otimes I_n \\ L_d(\la^\ell) \otimes I_n & 0 \end{bmatrix}, \ {\rm with\ } \cM_A[\pm M] = M^\star(\la),
	\end{equation}
	is called an {\it $\cM_A$-structured block-Kronecker degree-$\ell$ matrix polynomial}.
}
\end{Def}

The following theorem is a corollary of Theorem \ref{StructLification} and, then, a simple extension of \cite[Theorem 5.3]{dopico2019structured}.

\begin{Teo}\label{StructPLambdaKroneckerTeo}
	Let $\mathcal{L}(\la)$ be an $\cM_A$-structured block-Kronecker degree-$\ell$ matrix polynomial as in \eqref{StructLSBlockKronecker}. Then, the matrix polynomial $\mathcal{L}(\la)$ is a strong $\ell$-ification of the $\cM_A$-structured matrix polynomial of grade $(2d + 1)\ell$
	\begin{equation}\label{StructPlambdaKronecker}
	P(\la) := (\cM_{A}[\pm \Lambda_d]^\top(\la^\ell) \otimes I_n)M(\la)(\Lambda_d(\la^\ell) \otimes I_n),
	\end{equation}
	where $\Lambda_d(\la^\ell)$ is as in \eqref{Lambdad}. Moreover, 
	\begin{itemize}
		\item if $0 \le \epsilon_1 \le \epsilon_2 \le \cdots \le \epsilon_p$ are the right minimal indices of $P(\la)$, then $\epsilon_1 + d\ell \le \epsilon_2 + d\ell \le \cdots \le \epsilon_p + d\ell$ are the right minimal indices of $\mathcal{L}(\la)$, and
		\item if $0 \le \eta_1 \le \eta_2 \le \cdots \le \eta_p$ are the left minimal indices of $P(\la)$, then $\eta_1 + d\ell \le \eta_2 + d\ell \le \cdots \le \eta_p + d\ell$ are the left minimal indices of $\mathcal{L}(\la)$.
	\end{itemize}
\end{Teo}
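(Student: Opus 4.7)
The plan is to derive this theorem as a direct specialization of Theorem \ref{StructLification} via the specific choice of dual minimal bases
\[
K(\la) \;=\; L_d(\la^\ell) \otimes I_n \qquad \text{and} \qquad N(\la) \;=\; \Lambda_d(\la^\ell) \otimes I_n .
\]
First I would verify the hypotheses of Theorem \ref{StructLification} for this pair: by Lemma \ref{dualminbasesell}(b), $K(\la)$ and $N(\la)$ are dual minimal bases, and by the remark immediately following that lemma, all row degrees of $K(\la)$ are equal to $\ell$ while all row degrees of $N(\la)$ are equal to $d\ell$. In particular, $\deg(N) = d\ell$, so Theorem \ref{StructLification} will yield a strong $\ell$-ification of a matrix polynomial of grade $\ell + 2\deg(N) = \ell + 2d\ell = (2d+1)\ell$, and the left/right minimal indices of $\mathcal{L}(\la)$ will be those of $P(\la)$ shifted by $d\ell$, exactly as stated.

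Next I would match the specific form of $\mathcal{L}(\la)$ in \eqref{StructLSBlockKronecker} with the general form \eqref{StructLSBlock} of an $\cM_A$-structured strong block minimal bases degree-$\ell$ matrix polynomial. The bottom-left block is $K(\la) = L_d(\la^\ell) \otimes I_n$ by construction, so it remains to check that the top-right block $\cM_A[\pm L_d]^\top(\la^\ell) \otimes I_n$ coincides with $\cM_A[\pm K]^\star$. This is handled using Proposition \ref{PropersMaPs}(f) to pull the $\otimes I_n$ factor out of the M\"obius transformation, Proposition \ref{PropersMaPs}(g) to interchange transpose and M\"obius transformation, together with the observation that $L_d$ has real integer entries, so that $\overline{L_d} = L_d$ and $L_d^\star = L_d^\top$; these same facts let us pass freely between $\cM_A[\cdot]^\star$ and $\cM_{\overline{A}}[\cdot]^\top$ as required by the two equivalent forms in Definition \ref{StructSBMB}.

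Finally I would identify the recovered polynomial from Theorem \ref{StructLification}, namely $P(\la) = (\cM_{\overline{A}}[\pm \overline{N}]\, M\, N^\top)(\la)$, with the expression in \eqref{StructPlambdaKronecker}. Since $N(\la) = \Lambda_d(\la^\ell) \otimes I_n$ has real entries, $\overline{N} = N$, and applying once again Proposition \ref{PropersMaPs}(f)--(g) allows me to rewrite $\cM_{\overline{A}}[\pm \overline{N}]$ as $\cM_A[\pm \Lambda_d]^\top(\la^\ell) \otimes I_n$ and $N^\top$ as $\Lambda_d(\la^\ell) \otimes I_n$, yielding precisely \eqref{StructPlambdaKronecker}. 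The $\cM_A$-structured character of this $P(\la)$ and the strong $\ell$-ification property then follow from Theorem \ref{StructLification} without further work.

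The only delicate point, and the step I expect to require the most care, is the bookkeeping around the interaction between the variable substitution $\la \mapsto \la^\ell$ and the M\"obius transformation $\cM_A$ in the top-right block: one must confirm that $\cM_A[\pm L_d](\la^\ell) = \cM_A[\pm L_d(\la^\ell)]$ in the sense used by Definition \ref{StructBK}, and likewise for $\Lambda_d$. This is the content of the fact that taking the M\"obius transformation of a polynomial in $\la^\ell$ (regarded as having its natural grade) commutes with extracting it as a grade-$\ell$ polynomial in $\la$, which is tacitly built into the construction of block-Kronecker $\ell$-ifications in \cite{dopico2019block}; beyond this bookkeeping, no new ideas beyond those already in Theorem \ref{StructLification} are needed.
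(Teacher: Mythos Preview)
Your proposal is correct and follows precisely the approach the paper intends: the paper itself introduces Theorem \ref{StructPLambdaKroneckerTeo} as ``a corollary of Theorem \ref{StructLification} and, then, a simple extension of \cite[Theorem 5.3]{dopico2019structured},'' and your argument specializes Theorem \ref{StructLification} to $K(\la)=L_d(\la^\ell)\otimes I_n$ and $N(\la)=\Lambda_d^\top(\la^\ell)\otimes I_n$ exactly as required. One small slip: you write $N(\la)=\Lambda_d(\la^\ell)\otimes I_n$ at the outset but later (correctly) use $N^\top=\Lambda_d(\la^\ell)\otimes I_n$; the minimal basis dual to $L_d(\la^\ell)\otimes I_n$ is $\Lambda_d^\top(\la^\ell)\otimes I_n$, as in Lemma \ref{dualminbasesell}. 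Your flagged ``delicate point'' is well taken: the paper's notation $\cM_A[\pm L_d](\la^\ell)$ really means the M\"obius transformation applied to $L_d(\la^\ell)$ viewed as a grade-$\ell$ polynomial in $\la$ (cf.\ Table \ref{tableMa}), which is exactly the $\cM_A[\pm K]$ required by Definition \ref{StructSBMB}, so no extra work is needed.
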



As in Remarks \ref{RemGeneral} and \ref{RemStruct}, we show, in the following remark, the relation between the size and the grade of the polynomials $\mathcal{L}(\la)$ and $P(\la)$ in \eqref{StructLSBlockKronecker} and \eqref{StructPlambdaKronecker}, respectively, and we deduce the relation on the parity of both grades.

\begin{Rem}
	Let $P(\la)$ be the matrix polynomial in \eqref{StructPlambdaKronecker}, with grade $k:= (2d+1)\ell$ and size $n \times n$. Then, $\mathcal{L}(\la)$ in \eqref{StructLSBlockKronecker} has size $(2d+1)n \times (2d+1)n$. Note that $\ell$ is a divisor of $k$, that is, the degree of $\mathcal{L}(\la)$ as in \eqref{StructLSBlockKronecker} is a divisor of the grade of $P(\la)$ as in \eqref{StructPlambdaKronecker}. Moreover, $k$ and $\ell$ have the same parity, since $2d+1$ is odd. 
\end{Rem}

In Table \ref{tableMa} we list the minimal bases $\cM_A[\pm L_d](\la^\ell)$ and the conditions on the grade-$\ell$ matrix polynomial $M(\la)$ for $\cM_A$-structured block-Kronecker degree-$\ell$ matrix polynomials, according to Definition \ref{Structures-Mobius}. Notice that the parity of $\ell$ is only relevant for the $\star$-even and $\star$-odd structures. 
\begin{table}[H]
	\centering
	\resizebox{16.5cm}{3cm}{
	\begin{tabular}{| c | c | c | c |}
		\hline
		Structure & $\cM_A[\pm L_d](\la^\ell)$ & Conditions on $M(\la) = \sum_{i=0}^\ell \la^i M_i$ & Parity of $\ell$ \\
		\hline
		\hline
		$\star$-symmetric & $\cM_{A_1}[L_d](\la^\ell) = L_d(\la^\ell)$ & $M_i^\star = M_i$, for $i=0,1,\hdots,\ell$ & any $\ell$ \\
		\hline
		$\star$-skew-symmetric & $\cM_{A_1}[-L_d](\la^\ell) =-L_d(\la^\ell)$ & $M_i^\star = -M_i$, for $i=0,1,\hdots,\ell$ & any $\ell$ \\
		\hline
		\multirow{2}{2.5cm}{$\begin{array}{c} \textup{$\star$-even} \\  \textup{(alternating)}\end{array}$} & $\cM_{A_2}[L_d](\la^\ell) = L_d(-\la^\ell)$ & $\begin{array}{c} {M_{2i}}^\star = M_{2i} \ {\rm and}\ {M_{2i+1}}^\star = -M_{2i+1}, \\ {\rm for}\ i=0,1,\hdots,\frac{\ell-1}{2}\end{array}$ & $\ell$ odd \\ \cline{2-4} & $\cM_{A_2}[L_d](\la^\ell) = L_d(\la^\ell)$ & $\begin{array}{c} {M_{2i}}^\star = M_{2i},\ {\rm for}\ i=0,1,\hdots,\frac{\ell}{2},\ {\rm and} \\ {M_{2i+1}}^\star = -M_{2i+1},\ {\rm for}\ i=0,1,\hdots,\frac{\ell}{2}-1\end{array}$ & $\ell$ even \\ 
		\hline
		\multirow{2}{2.5cm}{$\begin{array}{c} \textup{$\star$-odd} \\ \textup{(alternating)}\end{array}$} & $\cM_{A_2}[-L_d](\la^\ell) =-L_d(-\la^\ell)$ & $\begin{array}{c} {M_{2i}}^\star = -M_{2i} \ {\rm and}\ {M_{2i+1}}^\star = M_{2i+1}, \\ {\rm for}\ i=0,1,\hdots,\frac{\ell-1}{2}\end{array}$ & $\ell$ odd \\ \cline{2-4} & $\cM_{A_2}[-L_d](\la^\ell) = -L_d(\la^\ell)$ &  $\begin{array}{c} {M_{2i}}^\star = -M_{2i},\ {\rm for}\ i=0,1,\hdots,\frac{\ell}{2},\ {\rm and} \\ {M_{2i+1}}^\star = M_{2i+1},\ {\rm for}\ i=0,1,\hdots,\frac{\ell}{2}-1\end{array}$ & $\ell$ even \\
		\hline
		$\star$-palindromic & $\cM_{A_3}[L_d](\la^\ell) = {\rm rev}_\ell L_d(\la^\ell)$ & $M_i^\star = M_{\ell-i}$, for $i=0,1,\hdots,\lceil \frac{\ell-1}{2}\rceil$ & any $\ell$\\
		\hline
		$\star$-anti-palindromic & $\cM_{A_3}[-L_d](\la^\ell) = -{\rm rev}_\ell L_d(\la^\ell)$ & $M_i^\star = -M_{\ell-i}$, for $i=0,1,\hdots,\lceil \frac{\ell-1}{2}\rceil$ & any $\ell$\\
		\hline
	\end{tabular}}
	\caption{Minimal bases $\cM_A[\pm L_d](\la^\ell)$ and conditions on the grade-$\ell$ matrix polynomial $M(\la):= \sum_{i=0}^\ell \la^i M_i$, taking into account the parity of $\ell$ when necessary.}
	\label{tableMa}
\end{table}

Definition \ref{structures} introduces three different conditions on the matrix polynomial $M(\la)$, extending the ones in \cite[Th. 5.4]{dopico2019structured} for matrix pencils. They will be used to characterize (structured) strong $\ell$-ifications of the given (structured) matrix polynomial $P(\la)$ as in \eqref{mpoly}. 

\begin{Def}\label{structures}
{\rm
	Let $P(\la) = \sum_{j=0}^k \la^j P_j \in \FF[\la]^{n \times n}$ be a grade-$k$ matrix polynomial, and let $M(\la) = \sum_{i=0}^\ell \la^i M_i \in \FF[\la]^{(d+1)n \times (d+1)n}$ be a grade-$\ell$ matrix polynomial, with $k=(2d+1)\ell$, for some $d \geq 0$. Let us partition the matrices $M_i$, for $i=0,1,\hdots,\ell$, into $(d+1) \times (d+1)$ blocks, each of size $n \times n$, and let us denote the blocks by $[M_i]_{s,t} \in \FF^{n \times n}$, for $s,t = 1,2,\hdots,d+1$, and $i=0,1,\hdots,\ell$. Then, we say that $M(\la)$ satisfies
	\begin{enumerate}
		\item[{\rm (AS)}] the {\it anti-diagonal sum condition} for $P(\la)$ if
	\end{enumerate}
	$\left\{\begin{array}{ccll} P_{\ell r} &= &\displaystyle\sum_{s+t = 2d+2-r} [M_0]_{s,t} + \displaystyle\sum_{s+t = 2d+3-r} [M_\ell]_{s,t} & {\rm for}\ r = 0,1,\hdots,2d+1, \\ P_{\ell r+c} &= & \displaystyle\sum_{s+t = 2d+2-r} [M_c]_{s,t} & \begin{array}{l}{\rm for}\ r = 0,1,\hdots,2d, \\{\rm and}\ c = 1,2,\hdots,\ell-1. \end{array} \end{array}\right.$
	\begin{enumerate}
		\item[{\rm (ASS)}] the {\it anti-diagonal signed sum condition} for $P(\la)$ if
	\end{enumerate}
	$\left\{\begin{array}{ccll} P_{\ell r}& = &\displaystyle\sum_{s+t = 2d+2-r} (-1)^{d-s+1}[M_0]_{s,t} + \displaystyle\sum_{s+t = 2d+3-r} (-1)^{d-s+1}[M_\ell]_{s,t} & {\rm for}\ r = 0,1,\hdots,2d+1,\ {\rm and} \\ P_{\ell r+c}& = &\displaystyle\sum_{s+t = 2d+2-r} (-1)^{d-s+1}[M_c]_{s,t} & \begin{matrix}{\rm for}\ r = 0,1,\hdots,2d, \\{\rm and}\ c = 1,2,\hdots,\ell-1. \end{matrix} \end{array}\right.$
	\begin{enumerate}
		\item[{\rm (DS)}] the {\it diagonal sum condition} for $P(\la)$ if
	\end{enumerate}
	$\left\{\begin{array}{ccll} P_{\ell r} &=& \displaystyle\sum_{s-t = r-d} [M_0]_{s,t} + \displaystyle\sum_{s-t = r-d-1} [M_\ell]_{s,t} & {\rm for}\ r = 0,1,\hdots,2d+1,\ {\rm and} \\ P_{\ell r+c} &= &\displaystyle\sum_{s-t = r-d} [M_c]_{s,t} & \begin{matrix}{\rm for}\ r = 0,1,\hdots,2d, \\{\rm and}\ c = 1,2,\hdots,\ell-1. \end{matrix} \end{array}\right.$
}
\end{Def}

The {\rm AS} condition was introduced for the first time in \cite{bueno2018explicit} for symmetric matrix pencils. The names and acronyms of the other conditions have been chosen in order to be consistent with this one. 

Note that the {\rm AS} condition states that the sum of the blocks on the $(2d+1-r)$th block anti-diagonal of $M_0$ plus the sum of the blocks on the $(2d+2-r)$th block anti-diagonal of $M_\ell$ must be equal to the coefficient $P_{\ell r}$ of $P(\la)$, and that the sum of the remaining blocks on the $(2d+1-r)$th block anti-diagonal of $M_c$, for $c=1,2,\hdots,\ell-1,$ must be equal to the coefficient $P_{\ell r + c}$.

The {\rm ASS} condition is similar to the {\rm AS} condition but adding some minus signs in certain blocks, depending on the respective rows they are placed in, together with the parity of $d$. More precisely,
\begin{equation*}
\left\{\begin{array}{cl} (-1)^{d-s+1} = -1 & \left\{ \begin{array}{l} {\rm if\ } d {\rm \ is\ even,\ in\ even\ rows,} \\ {\rm if\ } d {\rm \ is\ odd,\ in\ odd\ rows,} \end{array}\right. \\ (-1)^{d-s+1} = 1, & {\rm otherwise.} \end{array}\right.
\end{equation*}

Finally, the {\rm DS} condition states that the sum of the blocks on the $(2d+1-r)$th block diagonal of $M_0$ plus the sum of the blocks on the $(2d+2-r)$th block diagonal of $M_\ell$ must be equal to the coefficient $P_{\ell r}$ of $P(\la)$, and that the sum of the remaining blocks on the $(2d+1-r)$th block diagonal of $M_c$, for $c=1,2,\hdots,\ell-1,$ must be equal to the coefficient $P_{\ell r + c}$, where the order of the diagonals increases from the lower left corner (the block entry $(d+1,1)$) to the upper right corner (the block entry $(1,d+1)$).

Notice that, when $\ell=1$, the second line in each of the {\rm AS}, {\rm ASS}, or {\rm DS} conditions does not appear in the corresponding conditions (just look at the range of $c$). In this case, $M(\la)$ is a matrix pencil of the form $M(\la) = M_0 + \la M_1$, and only the first line of these conditions applies. In particular, for matrix pencils, these conditions are introduced in \cite[Th. 5.4]{dopico2019structured}.

Regarding $\ell>1$, as a consequence of the second line in any of the AS, ASS, and DS conditions, there is, at least, one nonzero block entry for each (anti-)diagonal (there is a total number of $2d+1$ (anti-)diagonals), because the coefficient $P_{\ell r + c}$, for $c=1,2,\hdots,\ell-1$, and $r=0,1,\hdots,2d$, must be equal to the sum of all the blocks of $M_c$ placed on the $(2d+1-r)$th (anti-)diagonal.

In Theorem \ref{NoStructLlambdaTeo}, we show how to construct strong $\ell$-ifications of a given matrix polynomial of grade $k$, using the conditions {\rm AS}, {\rm ASS}, and {\rm DS} introduced in Definition \ref{structures}. In particular, we impose these conditions to the $(1,1)$ big block of a degree-$\ell$ matrix polynomial as in \eqref{StructLSBlockKronecker}. We emphasize that the given matrix polynomial and its $\ell$-ification are not necessarily $\cM_A$-structured, although the basic template of the $\ell$-ification is the same as for the $\cM_A$-stuctured block-Kronecker degree-$\ell$ matrix polynomial in \eqref{StructLSBlockKronecker}. The reason for this is to obtain $\cM_A$-structured strong $\ell$-ifications when the $(1,1)$ block is $\cM_A$-structured (which is provided in Theorem \ref{StructLgeneral}).

In the forthcoming developments, we will apply M\"obius transformations to several different matrix polynomials, say $Q(\la)$ (in particular, $Q(\la)$ is either $M(\la)$, $L_d(\la^\ell)$, $\Lambda_d(\la^\ell)$, $P(\la)$, or $\mathcal{L}(\la)$). In some cases, the M\"obius transformations are applied to $Q(\la)$ but, in some other cases, they are applied to $-Q(\la)$. In the first case, we will use the notation $\cM_A[+]$, where $A$ is the matrix associated to the M\"obius transformation, and in the second case, we use $\cM_A[-]$. 

We also have to take into account the parity of the grades $k$, $\ell$ in Theorem \ref{NoStructLlambdaTeo} (recall that $k$ and $\ell$ have the same parity).

\begin{Teo}\label{NoStructLlambdaTeo}
	Let $P(\la) = \sum_{j=0}^k \la^j P_j \in \FF[\la]^{n \times n}$ be a matrix polynomial of grade $k$ (not necessarily $\cM_A$-structured), and let $M(\la) = \sum_{i=0}^\ell \la^i M_i \in \FF[\la]^{(d+1)n \times (d+1)n}$ be a grade-$\ell$ matrix polynomial, with $k=(2d+1)\ell$, for some $d \geq 0$. Let us partition the matrices $M_i$, for $i=0,1,\hdots,\ell$, into $(d+1) \times (d+1)$ blocks, each of size $n \times n$, and let us denote the blocks by $[M_i]_{s,t} \in \FF^{n \times n}$ for $s,t = 1,2,\hdots,d+1$, and $i=0,1,\hdots,\ell$. Let $A$ be one of the matrices in Definition {\rm\ref{Structures-Mobius}} and set 
	\begin{equation}\label{NoStructLlambda}
	\mathcal{L}(\la) = \begin{bmatrix} M(\la) & \cM_A[\pm L_d]^\top(\la^\ell) \otimes I_n \\ L_d(\la^\ell) \otimes I_n & 0 \end{bmatrix}.
	\end{equation}
	Then, if $M(\la)$ satisfies either
	\begin{enumerate}[{\rm (i)}]
		\item the {\rm AS} condition for $P(\la)$, when $\left\{\begin{array}{ll} A=A_1, & for\ any\ \ell,\ or \\ A=A_2, & if\ \ell\ is\ even,\ or \end{array}\right.$
		\item the {\rm ASS} condition for $P(\la)$, when $A=A_2$, if $\ell$ is odd, or 
		\item the {\rm DS} condition for $P(\la)$, when $A=A_3$, for any $\ell$,
	\end{enumerate}
	the matrix polynomial $\mathcal{L}(\la)$ in \eqref{NoStructLlambda} is a strong block minimal bases degree-$\ell$ matrix polynomial (not necessarily $\cM_A$-structured) such that:
	\begin{enumerate}[{\rm (a)}]
		\item $\mathcal{L}(\la)$ is a strong $\ell$-ification of $P(\la)$,
		\item if $0 \le \epsilon_1 \le \epsilon_2 \le \cdots \le \epsilon_p$ are the right minimal indices of $P(\la)$, then $\epsilon_1 + d\ell \le \epsilon_2 + d\ell \le \cdots \le \epsilon_p + d\ell$ are the right minimal indices of $\mathcal{L}(\la)$, and
		\item if $0 \le \eta_1 \le \eta_2 \le \cdots \le \eta_p$ are the left minimal indices of $P(\la)$, then $\eta_1 + d\ell \le \eta_2 + d\ell \le \cdots \le \eta_p + d\ell$ are the left minimal indices of $\mathcal{L}(\la)$.
	\end{enumerate}
\end{Teo}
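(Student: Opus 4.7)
The plan is to recognize $\mathcal{L}(\la)$ in \eqref{NoStructLlambda} as a strong block minimal bases degree-$\ell$ matrix polynomial (Definition~\ref{SBMB}) with a carefully chosen pair of dual minimal bases, apply Theorem~\ref{SLification} to import the strong $\ell$-ification conclusion together with the minimal-indices shift, and finally verify by a block-by-block expansion of the triple product in \eqref{Pgeneral} that each of the sum conditions (AS), (ASS), (DS) on $M(\la)$ is exactly what is needed to recover $P(\la)$.

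For the template I would take $K_1(\la):=L_d(\la^\ell)\otimes I_n$ and $K_2(\la):=\cM_A[\pm L_d](\la^\ell)\otimes I_n$, with natural duals $N_1(\la):=\Lambda_d^\top(\la^\ell)\otimes I_n$ and $N_2(\la):=\cM_A[\pm \Lambda_d^\top](\la^\ell)\otimes I_n$. Lemma~\ref{dualminbasesell} gives that $K_1$ and $N_1$ are dual minimal bases, each with all its row degrees constant (equal to $\ell$ and to $d\ell$, respectively). Since $A\in GL(2,\FF)$, Proposition~\ref{PropersMaPs}(k),(l), together with parts (c) and (e) to absorb the $\pm$ sign, transfers these properties to $(K_2,N_2)$. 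Hence $\mathcal{L}(\la)$ fits the template of Definition~\ref{SBMB}, and Theorem~\ref{SLification} yields at once that $\mathcal{L}(\la)$ is a strong $\ell$-ification of
$$\widehat{P}(\la) := N_2(\la)\,M(\la)\,N_1^\top(\la) = \bigl(\cM_A[\pm \Lambda_d^\top](\la^\ell)\otimes I_n\bigr)\,M(\la)\,\bigl(\Lambda_d(\la^\ell)\otimes I_n\bigr),$$
viewed as a polynomial of grade $\ell+2d\ell=(2d+1)\ell=k$, with right (resp.\ left) minimal indices equal to those of $\widehat{P}$ shifted by $d\ell$. Assertions (a)--(c) will then follow once I verify $\widehat{P}=P$.

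The main work is this last identification. From Definition~\ref{DefMaP}, a direct entry-by-entry computation shows that the $s$-th entry of the row $\cM_A[\Lambda_d^\top](\la^\ell)$ equals $\la^{(d+1-s)\ell}$ for $A=A_1$ and for $A=A_2$ with $\ell$ even, equals $(-1)^{d-s+1}\la^{(d+1-s)\ell}$ for $A=A_2$ with $\ell$ odd, and equals $\la^{(s-1)\ell}$ for $A=A_3$; the overall $\pm$ sign is harmless, since it can be absorbed into the choice of dual. Writing $M(\la)=\sum_{i=0}^{\ell}\la^i M_i$ in the $(d+1)\times(d+1)$ block partition $[M_i]_{s,t}$ and multiplying out, $\widehat{P}(\la)$ becomes a sum of monomials $\sigma_{A,s}\,\la^{e(A;s,t)\ell+i}\,[M_i]_{s,t}$ with anti-diagonal exponent $e(A;s,t)=2d+2-s-t$ for $A\in\{A_1,A_2\}$, diagonal exponent $e(A_3;s,t)=d+s-t$, and sign $\sigma_{A_2,s}=(-1)^{d-s+1}$ only in the $A_2$-with-$\ell$-odd case. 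Writing $j=\ell r+c$ with $0\le c\le \ell-1$ and solving $e(A;s,t)\ell+i=\ell r+c$ forces $i=c$ on the single (anti-)diagonal $e=r$ when $1\le c\le \ell-1$, and admits exactly $i=0$ (on $e=r$) and $i=\ell$ (on the adjacent (anti-)diagonal $e=r-1$) when $c=0$. This is precisely the two-line structure of (AS), (ASS), and (DS) in Definition~\ref{structures}, so the hypothesis on $M$ forces $\widehat{P}=P$. The main subtlety is this $c=0$ vs.\ $c\neq 0$ case split, sharpened in the $A_2$ case by the parity of $\ell$ (trivial $\cM_{A_2}$-action on $\Lambda_d^\top(\la^\ell)$ when $\ell$ is even, alternating signs when $\ell$ is odd), which is what toggles between the (AS) and (ASS) forms.
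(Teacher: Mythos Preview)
Your proposal is correct and follows essentially the same route as the paper: set up $\mathcal{L}(\la)$ as a strong block minimal bases degree-$\ell$ matrix polynomial via Lemma~\ref{dualminbasesell} and Proposition~\ref{PropersMaPs}, invoke Theorem~\ref{SLification} to obtain the strong $\ell$-ification and the $d\ell$ shift of minimal indices, and then verify by direct expansion that the recovered polynomial equals $\pm P(\la)$. The paper carries out the explicit coefficient check only for $A=A_2$ (treating $\ell$ even and $\ell$ odd separately) and declares the other cases similar, whereas you give a uniform monomial bookkeeping via $j=\ell r+c$; both land on the same (AS)/(ASS)/(DS) identification. One small point: your remark that the overall $\pm$ sign ``can be absorbed into the choice of dual'' is a bit terse---the paper handles this by observing that in the $\cM_A[-]$ case one recovers $-P(\la)$, and then notes that any strong $\ell$-ification of $-P(\la)$ is also one of $P(\la)$ with identical minimal indices; making this explicit would tighten your argument.
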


\begin{proof} Lemma \ref{dualminbasesell}-(b) and Proposition \ref{PropersMaPs}-(l) together imply that $\mathcal{L}(\la)$ is a strong block minimal bases degree-$\ell$ matrix polynomial (Definition \ref{SBMB}).
	
	Part {\rm (b1)} in Theorem \ref{SLification} guarantees that $\mathcal{L}(\la)$ is a strong $\ell$-ification of the matrix polynomial $Q(\la):= (\cM_A[\pm \Lambda_d]^\top(\la^\ell) \otimes I_n)M(\la)(\Lambda_d(\la^\ell) \otimes I_n)$. Then, to prove part {\rm (a)}, we just need to check that $Q(\la) = P(\la)$ or $Q(\la) = -P(\la)$, where the sign depends on the M\"obius transformation $\cM_A$. In particular, the identity $Q(\la) = P(\la)$ holds for $\cM_A[+]$ and the identity $Q(\la) = -P(\la)$ holds for $\cM_A[-]$. Moreover, notice that, in the case $Q(\la) = -P(\la)$, part {\rm (a)} in the statement also holds for $P(\la)$, since any strong $\ell$-ification of $-P(\la)$ is also a strong $\ell$-ification of $P(\la)$. By parts {\rm (b2)}, and {\rm (b3)} in Theorem \ref{SLification}, part {\rm (b)} in the statement follows, since the left and right minimal indices of $P(\la)$ and $-P(\la)$ are the same.
	
	To prove {\rm (a)}, we only consider the case $A=A_2$, since the proof for the other cases is similar. This transformation appears only in parts {\rm (i)} (for $\ell$ even) and {\rm (ii)} (for $\ell$ odd) in the statement.
	
	We consider separately the M\"obius transformations $\cM_{A_2}[+]$ and $\cM_{A_2}[-]$. Let us start with $\cM_{A_2}[+]$. Looking at Table \ref{tableMa}, $\cM_{A_2}[L_d](\la^\ell) \otimes I_n = L_d(-\la^\ell) \otimes I_n$, if $\ell$ is odd, or $\cM_{A_2}[L_d](\la^\ell) \otimes I_n = L_d(\la^\ell) \otimes I_n$, if $\ell$ is even. As for $\cM_{A_2}[\Lambda_d](\la^\ell) \otimes I_n$, we have the following:
	\begin{itemize}
		\item $\cM_{A_2}[\Lambda_d](\la^\ell) \otimes I_n = \Lambda_d(\la^\ell) \otimes I_n$, if $\ell$ is even. Then, no block entry has a minus sign.
		\item $\cM_{A_2}[\Lambda_d](\la^\ell) \otimes I_n = \Lambda_d(-\la^\ell) \otimes I_n = \begin{bmatrix} -\la^{d\ell} & \cdots & +\la^{2\ell} & -\la^\ell & +1 \end{bmatrix}\otimes I_n$, if $\ell$ and $d$ are odd. Then, the block entries in odd positions of $\Lambda_d(-\la^\ell) \otimes I_n$ have a minus sign.
		\item $\cM_{A_2}[\Lambda_d](\la^\ell) \otimes I_n = \Lambda_d(-\la^\ell) \otimes I_n = \begin{bmatrix} +\la^{d\ell} & -\la^{(d-1)\ell}& \cdots & +\la^{2\ell} & -\la^\ell & +1 \end{bmatrix}\otimes I_n$, if $\ell$ is odd, and $d$ even. Then, the block entries in even positions of $\Lambda_d(-\la^\ell) \otimes I_n$ have a minus sign.
	\end{itemize}
	
	Therefore, if $\ell$ is odd, for any value of $d$,	we get
	$$
	\cM_{A_2}[\Lambda_d](\la^\ell) \otimes I_n = \begin{bmatrix} (-1)^d \la^{d\ell} & (-1)^{d-1}\la^{(d-1)\ell} & (-1)^{d-2}\la^{(d-2)\ell} & \cdots & +\la^{2\ell} & -\la^\ell & +1 \end{bmatrix}\otimes I_n.
	$$ 
	
	Now, we compute the product $Q(\la) = (\cM_{A_2}[\Lambda_d]^\top(\la^\ell) \otimes I_n)M(\la)(\Lambda_d(\la^\ell) \otimes I_n)$ using the previous expressions. Let us start assuming that $\ell$ is even. Then,
	\begin{equation*}
	\begin{array}{ccl}
	Q(\la) &= & [M]_{d+1,d+1} + \la^\ell \left([M]_{d,d+1} + [M]_{d+1,d} \right) + \la^{2\ell} \left([M]_{d-1,d+1} + [M]_{d,d} + [M]_{d+1,d-1}\right) + \cdots + \\
	&& \la^{d\ell} \left([M]_{1,d+1} + [M]_{2,d} + [M]_{3,d-1} + \cdots + [M]_{d-1,3} + [M]_{d,2} + [M]_{d+1,1}\right) + \cdots + \\
	&& \la^{(2d-2)\ell} \left( [M]_{1,3} + [M]_{2,2} + [M]_{3,1} \right) + \la^{(2d-1)\ell} \left( [M]_{1,2} + [M]_{2,1} \right) + \la^{2d\ell} [M]_{1,1},
	\end{array}
	\end{equation*}
	where $[M]_{s,t}$ is the grade-$\ell$ polynomial in the $(s,t)$ block of $M(\la)$. Then, since $M(\la)$ satisfies the {\rm AS} condition (part {\rm (AS)} in Definition \ref{structures}), the following identities are satisfied for each of the coefficients $P_j$, for $j=0,1,\hdots,k$:
	\begin{equation*}
	\begin{array}{rcl}
	P_0 &=& [M_0]_{d+1,d+1}, \\
	P_c &=& [M_c]_{d+1,d+1},\ {\rm for}\ c=1,2,\hdots,\ell-1,\\
	P_\ell &=& [M_\ell]_{d+1,d+1} + \left( [M_0]_{d,d+1} + [M_0]_{d+1,d}\right),\\
	P_{\ell +c} &=& [M_c]_{d,d+1} + [M_c]_{d+1,d},\ {\rm for}\ c=1,2,\hdots,\ell-1, \\
	P_{2\ell} &=& [M_\ell]_{d,d+1} + [M_\ell]_{d+1,d} + \left([M_0]_{d-1,d+1} + [M_0]_{d,d} + [M_0]_{d+1,d-1}\right), \\
	& \vdots \\
	P_{2d\ell} &=& \left([M_\ell]_{1,2} + [M_\ell]_{2,1}\right) + [M_0]_{1,1},\\
	P_{2d\ell + c} &=& [M_c]_{1,1},\ {\rm for}\ c=1,2,\hdots,\ell-1,\\
	P_{(2d+1)\ell} &=& [M_\ell]_{1,1}.
	\end{array}
	\end{equation*}
	Then, comparing the expressions for $Q(\lambda)$ and those for the coefficients $P_j$, for $j=0,1,\hdots, \ell(2d+1)=k$, and taking into account that $M(\lambda)=M_0+\lambda M_1+\cdots+\lambda^\ell M_\ell$, the identity $Q(\la) = P(\la)$ follows. Now, assume that $\ell$ is odd. Then,
	\begin{equation*}
	\begin{split}
	Q(\la) = & [M]_{d+1,d+1} + \la^\ell \left(-[M]_{d,d+1} + [M]_{d+1,d} \right) + \la^{2\ell} \left([M]_{d-1,d+1} - [M]_{d,d} + [M]_{d+1,d-1}\right) + \cdots+ \\
	& \la^{d\ell} \left((-1)^d [M]_{1,d+1} + (-1)^{d-1} [M]_{2,d} + (-1)^{d-2} [M]_{3,d-1} + \cdots + [M]_{d-1,3} - [M]_{d,2} + [M]_{d+1,1}\right) +  \\
	& \cdots +\la^{(2d-2)\ell} \left( (-1)^{d} [M]_{1,3} + (-1)^{d-1} [M]_{2,2} + (-1)^{d-2} [M]_{3,1} \right) +  \\
	& \la^{(2d-1)\ell} \left( (-1)^{d} [M]_{1,2} + (-1)^{d-1} [M]_{2,1} \right)+\la^{2d\ell} \left( (-1)^{d} [M]_{1,1}\right),
	\end{split}
	\end{equation*}
	so there are some signs that depend on the parity of $d$. Then, since $M(\la)$ satisfies the {\rm ASS} condition (part {\rm (ASS)} in Definition \ref{structures}), the coefficients $P_j$ satisfy, in this case, the following identities:
	\begin{equation*}
	\begin{split}
	P_0 =& [M_0]_{d+1,d+1}, \\
	P_c =& [M_c]_{d+1,d+1},\ {\rm for}\ c=1,2,\hdots,\ell-1,\\
	P_\ell =& [M_\ell]_{d+1,d+1} + \left( -[M_0]_{d,d+1} + [M_0]_{d+1,d}\right),\\
	P_{\ell +c} =& -[M_c]_{d,d+1} + [M_c]_{d+1,d},\ {\rm for}\ c=1,2,\hdots,\ell-1, \\
	P_{2\ell} =& -[M_\ell]_{d,d+1} + [M_\ell]_{d+1,d} + \left([M_0]_{d-1,d+1} - [M_0]_{d,d} + [M_0]_{d+1,d-1}\right), \\
	& \vdots \\
	P_{2d\ell} =& \left((-1)^{d} [M_\ell]_{1,2} + (-1)^{d-1} [M_\ell]_{2,1}\right) + (-1)^{d} [M_0]_{1,1},\\
	P_{2d\ell + c} =& (-1)^{d}[M_c]_{1,1},\ {\rm for}\ c=1,2,\hdots,\ell-1,\\
	P_{(2d+1)\ell} =& (-1)^{d}[M_\ell]_{1,1}.
	\end{split}
	\end{equation*}
	Notice that the only difference between these identities and those for the {\rm AS} condition (listed above for the case $\ell$ even), is the appearance of some minus signs in certain blocks of the polynomial $M(\la)$, depending on the respective rows they are placed in, together with the parity of $d$. Comparing, again, the coefficients of $Q(\la)$ and the expressions for $P_j$, for $j=0,1,\hdots,\ell(2d+1)=k$, and taking into account that $M(\la)=M_0+\la M_1+\cdots+\la^\ell M_\ell$, the identity $Q(\la) = P(\la)$ also holds in this case.
	
	Now, we consider the case $\cM_{A_2}[-]$. By Proposition \ref{PropersMaPs}-(c), $\cM_{A_2}[-L_d](\la^\ell) \otimes I_n = -\cM_{A_2}[L_d](\la^\ell) \otimes I_n$, and $\cM_{A_2}[-\Lambda_d](\la^\ell) \otimes I_n = -\cM_{A_2}[\Lambda_d](\la^\ell) \otimes I_n$. Reasoning as in the previous case, when we compute $Q(\la) = (\cM_{A_2}[-\Lambda_d]^\top(\la^\ell) \otimes I_n)M(\la)(\Lambda_d(\la^\ell) \otimes I_n) = -(\cM_{A_2}[\Lambda_d]^\top(\la^\ell) \otimes I_n)M(\la)(\Lambda_d(\la^\ell) \otimes I_n)$, since $M(\la)$ satisfies either the {\rm AS} condition, when $\ell$ is even, or the {\rm ASS} condition, when $\ell$ is odd, we conclude that $Q(\la) = -P(\la)$. Then the result is true for all appearances of $A=A_2$ in the statement.
\end{proof}

Notice that, in order for the matrix polynomial $\mathcal{L}(\la)$ as in \eqref{NoStructLlambda} to be an $\cM_A$-structured block-Kronecker degree-$\ell$ matrix polynomial (Definition \ref{StructBK}), the additional identity $\cM_A[\pm M] = M^\star(\la)$ must be satisfied, that is, $M(\la)$ must be $\cM_A$-structured. Theorem \ref{Ma-strucstrong} provides a way to construct a structure-preserving strong $\ell$-ification of a given $\cM_A$-structured matrix polynomial of grade $k$ via an $\cM_A$-structured strong block minimal bases grade-$\ell$ matrix polynomial with a particular $(1,1)$ big block (like the one in \eqref{MaL1/2}). More precisely, and focusing on the block-Kronecker dual minimal bases, what we need is a solution of \eqref{StructPlambdaKronecker}, $M(\la)$, not necessarily $\cM_A$-structured, and then place the polynomial $\frac{1}{2}\left(M + \cM_A[\pm M]^\star\right)(\la)$ in the $(1,1)$ big block of the $\cM_A$-structured block-Kronecker degree-$\ell$ matrix polynomial as in \eqref{StructLSBlockKronecker}. But Theorem \ref{NoStructLlambdaTeo} provides a way to construct a solution of \eqref{StructPlambdaKronecker} depending on the matrix $A$ associated with any of the structures in Definition \ref{Structures-Mobius}. Namely, we just need to follow either the {\rm AS}, {\rm ASS}, or {\rm DS} conditions, depending of the structure and the parity of $\ell$. These considerations lead us to the following result. 

\begin{Teo}\label{StructLgeneral}
	Let $P(\la) = \sum_{j=0}^k \la^j P_j \in \FF[\la]^{n \times n}$ be an $\cM_A$-structured matrix polynomial of grade $k$, with $A$ being any of the matrices in Definition {\rm\ref{Structures-Mobius}}. Let $M(\la) = \sum_{i=0}^\ell \la^i M_i \in \FF[\la]^{(d+1)n \times (d+1)n}$ be a grade-$\ell$ matrix polynomial, with $k=(2d+1)\ell$, for some $d\geq 0$. Let us partition the matrices $M_i$, for $i=0,1,\hdots,\ell$, into $(d+1) \times (d+1)$ blocks, each of size $n \times n$, and let us denote the blocks by $[M_i]_{s,t} \in \FF^{n \times n}$ for $s,t = 1,2,\hdots,d+1$, and $i=0,1,\hdots,\ell$. Then, if $M(\la)$ satisfies either {\rm (i)}, {\rm (ii)}, or {\rm (iii)} in Theorem {\rm\ref{NoStructLlambdaTeo}}, the matrix polynomial
	\begin{equation}\label{GeneralLlambda}
	\mathcal{L}(\la) = \begin{bmatrix} \frac{1}{2}\left(M + \cM_A[\pm M]^\star\right)(\la) & \cM_A[\pm L_d]^\top(\la^\ell) \otimes I_n \\ L_d(\la^\ell) \otimes I_n & 0 \end{bmatrix}
	\end{equation}
	is an $\cM_A$-structured block-Kronecker degree-$\ell$ matrix polynomial such that:
	\begin{enumerate}[{\rm (a)}]
		\item $\mathcal{L}(\la)$ is a strong $\ell$-ification of $P(\la)$,
		\item if $0 \le \epsilon_1 \le \epsilon_2 \le \cdots \le \epsilon_p$ are the right minimal indices of $P(\la)$, then $\epsilon_1 + d\ell \le \epsilon_2 + d\ell \le \cdots \le \epsilon_p + d\ell$ are the right minimal indices of $\mathcal{L}(\la)$, and
		\item if $0 \le \eta_1 \le \eta_2 \le \cdots \le \eta_p$ are the left minimal indices of $P(\la)$, then $\eta_1 + d\ell \le \eta_2 + d\ell \le \cdots \le \eta_p + d\ell$ are the left minimal indices of $\mathcal{L}(\la)$.
	\end{enumerate}
\end{Teo}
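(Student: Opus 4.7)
The plan is to derive this theorem as an immediate consequence of combining Theorems \ref{NoStructLlambdaTeo} and \ref{Ma-strucstrong}, specialized to the block-Kronecker dual minimal bases $K(\la) := L_d(\la^\ell) \otimes I_n$ and $N(\la) := \Lambda_d^\top(\la^\ell) \otimes I_n$. By Lemma \ref{dualminbasesell}(b), these are dual minimal bases with all row degrees of $K(\la)$ equal to $\ell$ and all row degrees of $N(\la)$ equal to $d\ell$, so the minimal-bases hypotheses of Theorem \ref{Ma-strucstrong} are satisfied.

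The first step is to recognize that $M(\la)$ is a solution of equation \eqref{StructPlambda} for this choice of $K$ and $N$. Using parts (f) and (g) of Proposition \ref{PropersMaPs} together with the facts that $\overline{A} = A$ for all matrices in Definition \ref{Structures-Mobius} and $\overline{\Lambda_d} = \Lambda_d$ (since the coefficients are real), equation \eqref{StructPlambda} specializes to
\[
P(\la) \;=\; (\cM_A[\pm \Lambda_d]^\top(\la^\ell) \otimes I_n)\,X(\la)\,(\Lambda_d(\la^\ell) \otimes I_n).
\]
The calculations in the proof of Theorem \ref{NoStructLlambdaTeo} show precisely that satisfying any of the AS, ASS, or DS conditions in cases (i)--(iii) is equivalent to $M(\la)$ being a solution of this equation, up to an overall sign on the right-hand side coming from the $\cM_A[\pm]$ bookkeeping. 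This sign is harmless because any strong $\ell$-ification of $-P(\la)$ is also a strong $\ell$-ification of $P(\la)$.

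With $M(\la)$ established as a solution, Theorem \ref{Ma-strucstrong} applies directly and yields all the required conclusions at once: $\widetilde M(\la) := \frac{1}{2}\bigl(M + \cM_A[\pm M]^\star\bigr)(\la)$ is an $\cM_A$-structured solution of the same equation, and the matrix polynomial $\mathcal{L}(\la)$ in \eqref{GeneralLlambda} is an $\cM_A$-structured strong block minimal bases degree-$\ell$ matrix polynomial which is a strong $\ell$-ification of $P(\la)$. Because $K(\la) = L_d(\la^\ell) \otimes I_n$ is of block-Kronecker form, $\mathcal{L}(\la)$ automatically meets Definition \ref{StructBK}, which proves the structural claim together with part (a). Parts (b) and (c) follow by specializing the shift-by-$\deg(N)$ formulas of Theorem \ref{Ma-strucstrong} to $\deg(N) = \deg(\Lambda_d^\top(\la^\ell)) = d\ell$.

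The most delicate point is the consistent tracking of the $\pm$ signs introduced through $\cM_A[\pm L_d]$, $\cM_A[\pm M]$ and $\cM_A[\pm P]$ across the six structures in Definition \ref{Structures-Mobius} and the two parities of $\ell$ in the alternating cases. However, this case analysis is already performed inside the proofs of Theorems \ref{NoStructLlambdaTeo} and \ref{Ma-strucstrong}, so no additional work beyond invoking those results with the block-Kronecker minimal bases specified above is required.
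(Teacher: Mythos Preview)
Your approach is essentially the same as the paper's: both argue that $M(\la)$ solves the block-Kronecker version of \eqref{StructPlambda} (via the computations in Theorem \ref{NoStructLlambdaTeo}), that the symmetrization $\widetilde M(\la)=\frac{1}{2}(M+\cM_A[\pm M]^\star)$ therefore also solves it, and that the conclusions then follow from the general theory of $\cM_A$-structured block minimal bases polynomials. The paper carries this out by invoking Theorem \ref{StructPLambdaKroneckerTeo} directly and re-verifying the product identity for $\widetilde M$; you instead route through Theorem \ref{Ma-strucstrong}, which is slightly cleaner since that theorem already packages the ``$\widetilde M$ is an $\cM_A$-structured solution'' step.

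There is one concrete slip: Theorem \ref{Ma-strucstrong} does \emph{not} contain the shift-by-$\deg(N)$ formulas for minimal indices. Its statement ends with ``is a strong $\ell$-ification of $P(\la)$''. The minimal-index shifts are in Theorem \ref{StructLification} (or, in the block-Kronecker specialization, Theorem \ref{StructPLambdaKroneckerTeo}). So for parts (b) and (c) you must cite one of those results, not Theorem \ref{Ma-strucstrong}. With that citation corrected, your argument is complete and matches the paper's.
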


\begin{Rem}
{\rm
	In the statement of Theorem \ref{StructLgeneral}, the $\pm$ signs in \eqref{GeneralLlambda} must be taken in accordance with the appropriate identity in \eqref{Ma-struct} for the $\cM_A$-structured matrix polynomial $P(\la)$. 
}
\end{Rem}

\begin{proof}[Proof (of Theorem {\rm\ref{StructLgeneral}})] By Proposition \ref{PropersMaPs}, and using that $A$ is coninvolutory, it is straightforward to check that $\mathcal{L}(\la)$ is $\cM_A$-structured. In particular, it is an $\cM_A$-structured block-Kronecker degree-$\ell$ matrix polynomial (Definition \ref{StructBK}). To prove parts {\rm (a)--(c)}, we need to check that \eqref{StructPlambdaKronecker} holds for the matrix polynomial $\frac{1}{2}\left(M + \cM_A[\pm M]^\star\right)(\la)$ (up to a sign, specifically in the cases associated to the M\"obius transformation $\cM_A[-]$), and then, the results follow immediately from Theorem \ref{StructPLambdaKroneckerTeo}.
	
	We focus only on $A=A_1$, since the proof for the other cases is similar. In particular, $M(\la)$ satisfies the {\rm AS} condition for $P(\la)$ (see Theorem \ref{NoStructLlambdaTeo}). 
	
	First, we consider the case $\cM_{A_1}[+]$. Since $A_1 = I_2$, we have $\cM_{A_1}[L_d](\la^\ell) \otimes I_n = L_d(\la^\ell) \otimes I_n$, $\cM_{A_1}[\Lambda_d](\la^\ell) \otimes I_n = \Lambda_d(\la^\ell) \otimes I_n$, and $\cM_{A_1}[M] = M(\la)$. By Theorem \ref{StructPLambdaKroneckerTeo}, the matrix polynomial
	\begin{equation*}
	\mathcal{L}(\la) = \begin{bmatrix} \frac{1}{2}\left(M + \cM_{A_1}[M]^\star\right)(\la) & \cM_{A_1}[L_d]^\top(\la^\ell) \otimes I_n \\ L_d(\la^\ell) \otimes I_n & 0 \end{bmatrix} = \begin{bmatrix} \frac{1}{2}\left(M + M^\star\right)(\la) & L_d^\top(\la^\ell) \otimes I_n \\ L_d(\la^\ell) \otimes I_n & 0 \end{bmatrix}
	\end{equation*}
	is a strong $\ell$-ification of 
	\begin{equation*}
	P_{{\rm A}_1^+}(\la) := (\Lambda_d^\top(\la^\ell) \otimes I_n)\left(\frac{1}{2}\left(M + M^\star\right)(\la)\right)(\Lambda_d(\la^\ell) \otimes I_n).
	\end{equation*}
	If we compute the product of these three blocks, using $\widetilde{M} := \frac{1}{2}\left(M + M^\star\right)$ to abbreviate, we obtain the following polynomial:
	\begin{equation*}
	\begin{split}
	P_{{\rm A}_1^+}(\la) = & [\widetilde{M}]_{d+1,d+1} + \la^\ell \left([\widetilde{M}]_{d,d+1} + [\widetilde{M}]_{d+1,d} \right) + \la^{2\ell} \left([\widetilde{M}]_{d-1,d+1} + [\widetilde{M}]_{d,d} + [\widetilde{M}]_{d+1,d-1}\right) + \cdots + \\
	& \la^{d\ell} \left([\widetilde{M}]_{1,d+1} + [\widetilde{M}]_{2,d} + [\widetilde{M}]_{3,d-1} + \cdots + [\widetilde{M}]_{d-1,3} + [\widetilde{M}]_{d,2} + [\widetilde{M}]_{d+1,1}\right) + \cdots + \\
	& \la^{(2d-2)\ell} \left( [\widetilde{M}]_{1,3} + [\widetilde{M}]_{2,2} + [\widetilde{M}]_{3,1} \right) + \la^{(2d-1)\ell} \left( [\widetilde{M}]_{1,2} + [\widetilde{M}]_{2,1} \right) + \la^{2d\ell} [\widetilde{M}]_{1,1},
	\end{split}
	\end{equation*}
	where $[\widetilde{M}]_{s,t}$ is the grade-$\ell$ polynomial in the $(s,t)$ block of $\widetilde{M}(\la)$. Since $P(\la)$ is $\cM_{A_1}$-structured, in the sense that $\cM_{A_1}[P] = P^\star(\la)$ (i.e., $P_j = P_j^\star$, for $j=0,1,\hdots,k$), and $M(\la)$ satisfies the {\rm AS} condition (part {\rm (AS)} in Definition \ref{structures}), we conclude that $P_{{\rm A}_1^+}(\la) = \frac{1}{2}\left(P + P^\star\right)(\la) = P(\la)$. 
	
	Now, we consider the case $\cM_{A_1}[-]$. Then, $\cM_{A_1}[-L_d](\la^\ell) \otimes I_n = -L_d(\la^\ell) \otimes I_n$, $\cM_{A_1}[-\Lambda_d](\la^\ell) \otimes I_n = -\Lambda_d(\la^\ell) \otimes I_n$, and $\cM_{A_1}[-M] = -M(\la)$. By Theorem \ref{StructPLambdaKroneckerTeo}, the matrix polynomial
	\begin{equation*}
	\mathcal{L}(\la) = \begin{bmatrix} \frac{1}{2}\left(M + \cM_{A_1}[-M]^\star\right)(\la) & \cM_{A_1}[-L_d]^\top(\la^\ell) \otimes I_n \\ L_d(\la^\ell) \otimes I_n & 0 \end{bmatrix} = \begin{bmatrix} \frac{1}{2}\left(M - M^\star\right)(\la) & -L_d^\top(\la^\ell) \otimes I_n \\ L_d(\la^\ell) \otimes I_n & 0 \end{bmatrix}
	\end{equation*}
	is a strong $\ell$-ification of 
	\begin{equation*}
	\begin{array}{rcl}
	P_{{\rm A}_1^-}(\la) &:=& (-\Lambda_d^\top(\la^\ell) \otimes I_n)\left(\frac{1}{2}\left(M - M^\star\right)(\la)\right)(\Lambda_d(\la^\ell) \otimes I_n)  \\
	&=& -(\Lambda_d^\top(\la^\ell) \otimes I_n)\left(\frac{1}{2}\left(M - M^\star\right)(\la)\right)(\Lambda_d(\la^\ell) \otimes I_n).
	\end{array}
	\end{equation*}
	Reasoning as in the previous case, since $M(\la)$ satisfies the {\rm AS} condition for $P(\la)$ and $P(\la)$ is $\cM_{A_1}$-structured, in the sense that $\cM_{A_1}[-P] = P^\star(\la)$ (which implies $-P_j = P_j^\star$, for $j=0,1,\hdots,k$), we conclude that $P_{{\rm A}_1^-}(\la) =-\frac{1}{2}(P-P^\star)(\la)= -P(\la)$. Therefore, parts (a)--(c) in the statement are satisfied, since any strong $\ell$-ification of $-P(\la)$ is also a strong $\ell$-ification of $P(\la)$, and the sets of left and right minimal indices of $P(\la)$ and $-P(\la)$ are the same.
\end{proof}

Theorem \ref{StructLgeneral} not only provides a way to construct the block $M(\la)$ in \eqref{StructLSBlockKronecker} to obtain a structure-preserving strong $\ell$-ification $\mathcal{L}(\la)$ of an $\cM_A$-structured matrix polynomial for all the structures in Definition \ref{Structures-Mobius}, but also presents a wide family of structured strong $\ell$-ifications. This family is wide in the sense that the polynomial $M(\la)$ can be constructed, in general, in many different ways just following the requeriments provided by either the {\rm AS}, {\rm ASS}, or {\rm DS} conditions. These conditions impose some restrictions on the (anti-) diagonals of $M(\la)$ in terms of the coefficients of $P(\la)$, but allow for some flexibility, and it is quite easy to construct different polynomials $M(\la)$ just following these rules. Once $M(\la)$ is built, we place the matrix polynomial $\frac{1}{2}\left(M + \cM_A[\pm M]^\star\right)(\la)$ in the $(1,1)$ big block of \eqref{GeneralLlambda} to get a structure-preserving strong $\ell$-ification of $P(\la)$.

Table \ref{tableRelation} summarizes the relevant information in Theorem \ref{StructLgeneral} (conditions {\rm (i)}--{\rm (iii)}), and relates it with the particular structures introduced in Definition \ref{Structures-Mobius}.

\begin{table}[ht]
	\centering
	\begin{tabular}{| c | c | c || c |}
		\hline
		Conditions for $P(\la)$ & M\"obius transformation $\cM_A[\pm]$ & Parity of $\ell$ & Structure\\
		\hline
		\hline
		\multirow{4}{0.7cm}{AS} & $\cM_{A_1}[+]$ & any $\ell$ & $\star$-symmetric \\ \cline{2-4} & $\cM_{A_1}[-]$ & any $\ell$ & $\star$-skew-symmetric \\ \cline{2-4} & $\cM_{A_2}[+]$ & $\ell$ even & $\star$-even \\ \cline{2-4} & $\cM_{A_2}[-]$ & $\ell$ even & $\star$-odd \\
		\hline
		\multirow{2}{0.7cm}{ASS} & $\cM_{A_2}[+]$ & $\ell$ odd  & $\star$-even \\ \cline{2-4} & $\cM_{A_2}[-]$ & $\ell$ odd & $\star$-odd \\
		\hline
		\multirow{2}{0.7cm}{DS} & $\cM_{A_3}[+]$  & any $\ell$ & $\star$-palindromic \\ \cline{2-4} & $\cM_{A_3}[-]$ & any $\ell$ & $\star$-anti-palindromic \\
		\hline
	\end{tabular}
	\caption{Relation between the conditions and the structures associated to its M\"obius transformation.}
	\label{tableRelation}
\end{table}

In the following example we illustrate Theorem \ref{StructLgeneral}, by explicitly constructing structure-preserving strong quadratifications for $\star$-symmetric, $\star$-odd, and $\star$-palindromic matrix polynomials of grade $10$.
\begin{Exa}\label{ExamDuplic}
{\rm
	Let $P(\la) = \sum_{j=0}^{10} \la^j P_j \in \FF[\la]^{n \times n}$ be a matrix polynomial of grade $10$. Let us consider the following three quadratic matrix polynomials:
	\begin{equation*}
	M_1(\la) := {\footnotesize \left[\begin{array}{ccc} \la P_9 + \la^2 P_{10} & P_6 + \la P_7 + \la^2 P_8  & 0 \\ 0 & P_4 + \la P_5 & P_2 + \la P_3 \\ 0 & 0 & P_0 + \la P_1 \end{array}\right]},
	\end{equation*}
	\begin{equation*}
	\begin{split}
	M_2(\la) &:= {\footnotesize\left[\begin{array}{ccc} P_8 + \la P_9 + \la^2 P_{10} & 0 & P_4 \\ P_6 + \la P_7 & \la P_5 & 0 \\ 0 & \la P_3 & P_0 + \la P_1 +\la^2 P_2 \end{array}\right]},\ {\rm and} \\
	M_3(\la) &:= {\footnotesize\left[\begin{array}{ccc} P_4 + \la P_5 + \la^2 P_6 & 0 & P_0 + \la P_1 + \la^2 P_2 \\ \la P_7 & 0 & \la P_3 \\ P_8 + \la P_9 + \la^2 P_{10} & 0 & 0 \end{array}\right]}.
	\end{split}
	\end{equation*}
	It is straightforward to check that $M_1(\la)$ and $M_2(\la)$ satisfy the {\rm AS} condition, and that $M_3(\la)$ satisfies the {\rm DS} condition for $P(\la)$.
	
	If $A_1$ is the matrix in Definition {\rm\ref{Structures-Mobius}}, and $P(\la)$ is $\star$-symmetric, the quadratic matrix polynomial\break $\frac{1}{2}\left(M_1 + \cM_{A_1}[M_1]^\star\right)(\la)$ is equal to
	\begin{equation*}
	\frac{1}{2}\left(M_1 + M_1^\star\right)(\la) = {\footnotesize \left[\begin{array}{ccc} \la P_9 + \la^2 P_{10} & (P_6 + \la P_7 + \la^2 P_8)/2  & 0 \\ (P_6 + \la P_7 + \la^2 P_8)/2 & P_4 + \la P_5 & (P_2 + \la P_3)/2 \\ 0 & (P_2 + \la P_3)/2 & P_0 + \la P_1 \end{array}\right]},
	\end{equation*}
	which is a $\star$-symmetric quadratic matrix polynomial and, by Theorem {\rm\ref{StructLgeneral}}, the $\star$-symmetric block-Kronecker quadratic matrix polynomial
	\begin{equation*}
	\begin{array}{ccl}
	\mathcal{L}_{\mathcal{S}}(\la) &:=& \left[\begin{array}{c|c} \frac{1}{2}\left(M_1 + M_1^\star\right)(\la) & L_2^\top(\la^2) \otimes I_n \\ \hline L_2(\la^2) \otimes I_n & 0 \end{array}\right] \\
	&=& {\footnotesize \left[\begin{array}{c|c} \begin{array}{ccc} \la P_9 + \la^2 P_{10} & (P_6 + \la P_7 + \la^2 P_8)/2  & 0 \\ (P_6 + \la P_7 + \la^2 P_8)/2 & P_4 + \la P_5 & (P_2 + \la P_3)/2 \\ 0 & (P_2 + \la P_3)/2 & P_0 + \la P_1 \end{array} & \begin{array}{cc} -I_n & 0 \\ \la^2 I_n & -I_n \\ 0 & \la^2 I_n \end{array} \\ \hline \begin{array}{c@{\mskip 150mu}c@{\mskip 120mu}c@{\mskip -20mu}} -I_n & \la^2 I_n & 0 \\ 0  & -I_n & \la^2 I_n \end{array} & \begin{array}{c@{\mskip 45mu}c} 0 & 0 \\ 0 & 0 \end{array} \end{array}\right]}
	\end{array}
	\end{equation*}
	is a $\star$-symmetric strong quadratification of $P(\la)$ whenever $P(\la)$ is a $\star$-symmetric matrix polynomial. 
	
	If $A_2$ is the matrix in Definition {\rm\ref{Structures-Mobius}}, and $P(\la)$ is $\star$-odd, the quadratic matrix polynomial \\
	$\frac{1}{2}\left(M_2 +\cM_{A_2}[M_2]^\star\right)(\la)$ is equal to
	\begin{equation*}
	\frac{1}{2}\left(M_2(\la) - M_2^\star(-\la)\right) = {\footnotesize \left[\begin{array}{ccc} P_8 + \la P_9 + \la^2 P_{10} & (P_6 + \la P_7)/2 & P_4/2 \\ (P_6 + \la P_7)/2 & \la P_5 & \la P_3/2 \\ P_4/2 & \la P_3/2 & P_0 + \la P_1 +\la^2 P_2 \end{array}\right]},
	\end{equation*}
	which is a $\star$-odd quadratic matrix polynomial and, by Theorem {\rm\ref{StructLgeneral}}, the $\star$-odd block-Kronecker quadratic matrix polynomial
	\begin{equation*}
	\begin{array}{ccl}
	\mathcal{L}_{\mathcal{O}}(\la) &:=& \left[\begin{array}{c|c} \frac{1}{2}\left(M_2(\la) - M_2^\star(-\la)\right) & -L_2^\top(\la^2) \otimes I_n \\ \hline L_2(\la^2) \otimes I_n & 0 \end{array}\right] \\
	&= &{\footnotesize \left[\begin{array}{c|c} \begin{array}{ccc} P_8 + \la P_9 + \la^2 P_{10} & (P_6 + \la P_7)/2 & P_4/2 \\ (P_6 + \la P_7)/2 & \la P_5 & \la P_3/2 \\ P_4/2 & \la P_3/2 & P_0 + \la P_1 +\la^2 P_2 \end{array} & \begin{array}{cc} I_n & 0 \\ -\la^2 I_n & I_n \\ 0 & -\la^2 I_n \end{array} \\ \hline \begin{array}{c@{\mskip 115mu}c@{\mskip 100mu}c}  -I_n & \la^2 I_n & 0 \\ 0 & -I_n & \la^2 I_n \end{array} & \begin{array}{c@{\mskip 60mu}c} 0 & 0 \\ 0 & 0 \end{array} \end{array}\right]}
	\end{array}
	\end{equation*}
	is a $\star$-odd strong quadratification of $P(\la)$ (although, in this case, the following identity is the right one $(-\Lambda_2^\top(\la^2) \otimes I_n)\left(\frac{1}{2}\left(M_2(\la) - M_2^\star(-\la)\right)\right)(\Lambda_2(\la^2) \otimes I_n) := -P(\la)$) whenever $P(\la)$ is a $\star$-odd matrix polynomial. 
	
	Finally, if $A_3$ is the matrix in Definition {\rm\ref{Structures-Mobius}}, and $P(\la)$ is $\star$-palindromic, the quadratic matrix polynomial $\frac{1}{2}\left(M_3 + \cM_{A_3}[M_3]^\star\right)(\la)$ is equal to
	\begin{equation*}
	\frac{1}{2}\left(M_3 + {\rm rev}_2M_3^\star\right)(\la) = {\footnotesize \left[\begin{array}{ccc} P_4 + \la P_5 + \la^2 P_6 & \la P_3/2 & P_0 + \la P_1 + \la^2 P_2 \\ \la P_7/2 & 0 & \la P_3/2 \\ P_8 + \la P_9 + \la^2 P_{10} & \la P_7/2 & 0 \end{array}\right]},
	\end{equation*}
	which is a $\star$-palindromic quadratic matrix polynomial and, by Theorem {\rm\ref{StructLgeneral}}, the $\star$-palindromic block-Kronecker quadratic matrix polynomial
	\begin{equation*}
	\begin{array}{ccl}
	\mathcal{L}_{\mathcal{P}}(\la) &:= &\left[\begin{array}{c|c} \frac{1}{2}\left(M_3 + {\rm rev}_2M_3^\star\right)(\la) & {\rm rev}_2(L_2)^\top(\la^2) \otimes I_n \\ \hline L_2(\la^2) \otimes I_n & 0 \end{array}\right] \\
	&= &{\footnotesize \left[\begin{array}{c|c} \begin{array}{ccc} P_4 + \la P_5 + \la^2 P_6 & \la P_3/2 & P_0 + \la P_1 + \la^2 P_2 \\ \la P_7/2 & 0 & \la P_3/2 \\ P_8 + \la P_9 + \la^2 P_{10} & \la P_7/2 & 0 \end{array} & \begin{array}{cc} -\la^2 I_n & 0 \\ I_n & -\la^2 I_n \\ 0 & I_n \end{array} \\ \hline \begin{array}{c@{\mskip 80mu}c@{\mskip 75mu}c} -I_n & \la^2 I_n & 0 \\ 0 & -I_n & \la^2 I_n \end{array} & \begin{array}{c@{\mskip 60mu}c} 0 & 0 \\ 0 & 0 \end{array} \end{array}\right]}
	\end{array}
	\end{equation*}
	is a $\star$-palindromic strong quadratification of $P(\la)$ whenever $P(\la)$ is a $\star$-palindromic matrix polynomial. 
	
	Notice also that the matrix polynomials 
	\begin{equation*}
	\mathcal{L}_i(\la) = \left[\begin{array}{c|c} M_i(\la) & \cM_{A_i}[\pm L_2]^\top(\la^2) \otimes I_n \\ \hline L_2(\la^2) \otimes I_n & 0 \end{array}\right],\ {\rm for}\ i=1,2,3,
	\end{equation*}
	are, by Theorem {\rm\ref{NoStructLlambdaTeo}}, strong quadratifications of $P(\la)$, even if $P(\la)$ is not structured. 
	
	The previous constructions can be considered, of course, even if the original polynomial $P(\la)$ is not structured. However, in this case they do not necessarily provide a strong $\ell$-ification of the original polynomial $P(\la)$. Let us focus, for instance, on the matrix polynomial $M_1(\la)$, and assume that $P(\la)=\sum_{j=0}^k \la^j P_j$ is a general (not necessarily $\star$-symmetric) matrix polynomial. The quadratic matrix polynomial $\frac{1}{2}\left(M_1 + \cM_{A_1}[M_1]^\star\right)(\la)$ is equal to
	\begin{equation*}
	\frac{1}{2}\left(M_1 + M_1^\star\right)(\la) = {\footnotesize \left[\begin{array}{ccc} \la (P_9+P_9^\star) + \la^2 (P_{10}+P_{10}^\star) & (P_6 + \la P_7 + \la^2 P_8)/2  & 0 \\ (P_6^\star + \la P_7^\star + \la^2 P_8^\star)/2 & (P_4+P_4^\star) + \la (P_5+P_5^\star) & (P_2 + \la P_3)/2 \\ 0 & (P_2^\star + \la P_3^\star)/2 & (P_0+P_0^\star) + \la (P_1+P_1^\star) \end{array}\right]},
	\end{equation*}
	which is a $\star$-symmetric quadratic matrix polynomial and, by Theorem {\rm\ref{Ma-strucstrong}}, the $\star$-symmetric block-Kronecker quadratic matrix polynomial
	\begin{equation*}
	\begin{array}{ccl}
	{\footnotesize \left[\begin{array}{c|c} \begin{array}{ccc} \la (P_9+P_9^\star)/2 + \la^2 (P_{10}+P_{10}^\star)/2 & (P_6 + \la P_7 + \la^2 P_8)/2  & 0 \\ (P_6^\star + \la P_7^\star + \la^2 P_8^\star)/2 & (P_4+P_4^\star)/2 + \la (P_5+P_5^\star)/2 & (P_2 + \la P_3)/2 \\ 0 & (P_2^\star + \la P_3^\star)/2 & (P_0+P_0^\star)/2 + \la (P_1+P_1^\star)/2 \end{array} & \begin{array}{cc} -I_n & 0 \\ \la^2 I_n & -I_n \\ 0 & \la^2 I_n \end{array} \\ \hline \begin{array}{c@{\mskip 250mu}c@{\mskip 200mu}c@{\mskip -15mu}} -I_n & \la^2 I_n & 0 \\ 0  & -I_n & \la^2 I_n \end{array} & \begin{array}{c@{\mskip 50mu}c} 0 & 0 \\ 0 & 0 \end{array} \end{array}\right]}
	\end{array}
	\end{equation*}
	is a $\star$-symmetric strong quadratification of the matrix polynomial
	$$Q(\la) := (\Lambda_2^\top(\la^2) \otimes I_n)\left(\frac{1}{2}\left(M_1 + M_1^\star\right)\right)(\Lambda_2(\la^2) \otimes I_n) = \sum_{j=0}^k \la^j (P_j + P_j^\star),$$
	which is equal to the matrix polynomial $P(\la)$ if and only if $P(\la)$ is $\star$-symmetric.
}
\end{Exa}

\subsection{Sparse $\boldsymbol{\cM_A}$-structured block-Kronecker matrix polynomials} \label{SectionSparse}

In this section we analyze the sparsity of the strong $\ell$-ifications as in \eqref{GeneralLlambda}, and we will present a procedure to construct $\ell$-ifications in this family having exactly the minimum number of nonzero block entries. We will first determine, in Proposition \ref{PropSparse}, the smallest number of nonzero block entries of a matrix polynomial as in \eqref{NoStructLlambda} (not necessarily $\cM_A$-structured). To obtain this minimum number of nonzero block entries, we focus on the matrix polynomial $M(\la)$, placed in the $(1,1)$ big block of \eqref{NoStructLlambda}, because the two minimal bases, placed in the $(1,2)$ and $(2,1)$ big blocks, always have the same number of nonzero block entries.


\begin{Prop}\label{PropSparse}
	Let $P(\la)$, $M(\la)$, and $\mathcal{L}(\la)$ be as in the statement of Theorem {\rm \ref{NoStructLlambdaTeo}}. Then, $\mathcal{L}(\la)$ has, at least,
	\begin{enumerate}[{\rm (a)}]
		\item $2k-1+\left\lfloor \frac{k}{2} \right\rfloor=5d+1$ nonzero block entries, if $\ell=1$, or
		\item $\frac{3k}{\ell}-2=6d+1$ nonzero block entries, if $\ell>1$.
	\end{enumerate}
\end{Prop}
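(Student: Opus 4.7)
The plan is to count separately in the three big blocks of $\mathcal{L}(\la)$ as in \eqref{NoStructLlambda}. The $(1,2)$ and $(2,1)$ big blocks contain the minimal bases $\cM_A[\pm L_d]^\top(\la^\ell) \otimes I_n$ and $L_d(\la^\ell) \otimes I_n$. Inspecting Table \ref{tableMa}, each admissible instance of $\cM_A[\pm L_d](\la^\ell)$, namely $\pm L_d(\la^\ell)$, $\pm L_d(-\la^\ell)$, and $\pm {\rm rev}_\ell L_d(\la^\ell)$, shares the same nonzero block pattern as $L_d(\la^\ell)$ and hence contributes exactly $2d$ nonzero block entries. The two big blocks thus contribute exactly $4d$ in total, and the remaining task is to lower-bound the number of nonzero block entries of $M(\la)$ by $d+1$ when $\ell = 1$ and by $2d+1$ when $\ell > 1$. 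Throughout, a polynomial block $[M]_{s,t}(\la) = \sum_i \la^i [M_i]_{s,t}$ is counted as nonzero whenever any single coefficient $[M_i]_{s,t}$ is nonzero, so counting happens at the position level.

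I would run the argument under the AS condition and then note that the ASS case is identical modulo signs (hence same sparsity), while the DS case is obtained by swapping ``anti-diagonal'' for ``diagonal'' throughout, the $(d+1) \times (d+1)$ block grid having exactly $2d+1$ (anti-)diagonals either way. The argument assumes that $P(\la)$ is generic in the sense that all $P_j \neq 0$, which is where the bound is tight.

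For $\ell > 1$, the ``internal'' AS equations $P_{\ell r + c} = \sum_{s+t = 2d+2-r}[M_c]_{s,t}$ for $r \in \{0, \ldots, 2d\}$ and $c \in \{1, \ldots, \ell-1\}$ do the work directly: since $P_{\ell r + c} \neq 0$, the anti-diagonal $s+t = 2d+2-r$ must carry at least one nonzero block of $M(\la)$, for each such $r$. The $2d+1$ anti-diagonals are pairwise disjoint, so $M(\la)$ contains at least $2d+1$ nonzero block entries, yielding the total $4d + (2d+1) = 6d+1 = \frac{3k}{\ell} - 2$ via $k = (2d+1)\ell$.

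The $\ell = 1$ case is the main combinatorial hurdle, since the internal equations disappear and only $P_r = \sum_{s+t=2d+2-r}[M_0]_{s,t} + \sum_{s+t=2d+3-r}[M_1]_{s,t}$, for $r \in \{0, \ldots, 2d+1\}$, remains. I plan to recast this as a covering problem: let $T \subseteq \{2, \ldots, 2d+2\}$ be the set of anti-diagonal labels $s+t$ carrying nonzero positions in $M(\la)$. Genericity forces $T \cap \{2d+2-r, 2d+3-r\} \neq \emptyset$ for every $r$. The extreme constraints $r = 0$ and $r = 2d+1$ pin $2d+2 \in T$ and $2 \in T$ (the alternatives $2d+3$ and $1$ fall outside the feasible range $[2, 2d+2]$), and the remaining $2d$ constraints translate into a minimum vertex cover on the path with vertex set $\{3, \ldots, 2d+1\}$ and edges $\{j, j+1\}$ for $j = 3, \ldots, 2d$. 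That path has $2d-1$ vertices, so its minimum vertex cover has size $\lfloor (2d-1)/2 \rfloor = d-1$; combined with the two forced vertices, $|T| \geq d+1$, so $M(\la)$ has at least $d+1$ nonzero positions, and the total becomes $4d + (d+1) = 5d+1 = 2k - 1 + \lfloor k/2 \rfloor$ when $k = 2d+1$. The path-cover reduction is the only nontrivial step; the $\ell > 1$ case is essentially immediate once the internal equations are isolated.
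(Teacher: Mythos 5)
Your proposal is correct, and for part (b) (the $\ell>1$ case) it follows the paper's argument exactly: count the $4d$ nonzero blocks contributed by the two minimal bases, then observe that the second line of the AS/ASS/DS condition forces at least one nonzero block on each of the $2d+1$ (anti-)diagonals of $M(\la)$, giving $4d+(2d+1)=6d+1$. For part (a) (the $\ell=1$ case) you take a genuinely different route. The paper outsources the key step to \cite[Th.\ 52]{de2020gen}, a general result stating that any generalized companion pencil having at least $2(k-1)$ nonzero fixed blocks must have at least $2k-1+\lfloor k/2\rfloor$ nonzero blocks in total; the paper only needs to verify that the two Kronecker minimal bases supply $4d=2(k-1)$ such blocks. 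You instead give a self-contained combinatorial argument: using the first-line AS/ASS/DS equation with $\ell=1$, you show that the set $T$ of anti-diagonal (resp.\ diagonal) labels of $M(\la)$ carrying a nonzero block must contain the two extreme labels and a vertex cover of the path on the interior labels, forcing $|T|\ge 2+\lfloor(2d-1)/2\rfloor=d+1$, whence $4d+(d+1)=5d+1$. Both routes are valid and yield the same bound; yours is more transparent and self-contained and exposes the exact combinatorial mechanism at play in block-Kronecker pencils under the AS/ASS/DS constraints, while the paper's is shorter but requires consulting an external theorem that applies to all generalized companion pencils rather than just this family. (One very minor wording quibble: you say ``the remaining $2d$ constraints translate into a minimum vertex cover on the path'' with vertex set $\{3,\ldots,2d+1\}$; strictly, two of those $2d$ constraints, namely $r=1$ and $r=2d$, are already absorbed by the forced vertices $2d+2$ and $2$, so only $2d-2$ constraints survive as edges of that path. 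Your count $\lfloor(2d-1)/2\rfloor=d-1$ is nevertheless correct.)
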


\begin{proof}
	It is proved in \cite[Th. 52]{de2020gen} that any generalized companion pencil with, at least, $2(k-1)$ nonzero block entries in $\FF^{n \times n}$, has, at least, $2k-1+\left\lfloor \frac{k}{2} \right\rfloor$ nonzero block entries. Then, {\rm (a)} follows, because the pencils $L_d(\la) \otimes I_n$ and $\cM_A[\pm L_d]^\top(\la) \otimes I_n$ in \eqref{NoStructLlambda} have a total amount of $4d=2(k-1)$ nonzero blocks.

To prove {\rm (b)}, we are going to see that, for $\ell>1$, an ($\cM_A$-structured) block-Kronecker degree-$\ell$ matrix polynomial as in \eqref{NoStructLlambda} has $6d+1$ nonzero block entries. First, and as before, the polynomials $L_d(\la^\ell) \otimes I_n$ and $\cM_A[\pm L_d]^\top(\la^\ell) \otimes I_n$ in \eqref{NoStructLlambda} give us a total amount of $4d$ nonzero blocks. Second, the identity on the coefficients $P_{\ell r + c}$ in either the {\rm AS}, {\rm ASS}, or {\rm DS} condition for $P(\la)$ (second line of each of these conditions in Definition \ref{structures}), implies that all the $2d+1$ (anti-)diagonals of $M(\la)$ have, at least, a nonzero block entry containing coefficients of $P(\la)$. Therefore, $\mathcal{L}(\la)$ as in \eqref{NoStructLlambda} has, at least, $4d+2d+1 = 6d+1$ nonzero block entries. 
\end{proof}


Proposition \ref{PropSparse} motivates the following definition for structure-preserving $\ell$-ifications.

\begin{Def}\label{DefSparse}
{\rm
	A {\em sparse $\cM_A$-structured block-Kronecker strong $\ell$-ification of $P(\la)$} is a matrix polynomial satisfying Theorem {\rm\ref{StructLgeneral}} with exactly
	\begin{enumerate}[{\rm (a)}]
		\item $2k-1+\left\lfloor \frac{k}{2} \right\rfloor=5d+1$ nonzero block entries, if $\ell=1$, or
		\item $\frac{3k}{\ell}-2=6d+1$ nonzero block entries, if $\ell>1$.
	\end{enumerate}
}
\end{Def}

Proposition \ref{PropSparse} provides the smallest number of nonzero block entries within the family of strong $\ell$-ifications in Theorem \ref{NoStructLlambdaTeo}, namely those of the form \eqref{NoStructLlambda}. We are interested, however, in structure-preserving strong $\ell$-ifications as in \eqref{GeneralLlambda}, namely those satisfying Theorem \ref{StructLgeneral} (like in Definition \ref{DefSparse}). A natural question after Proposition \ref{PropSparse} is whether or not there are sparse $\cM_A$-structured block-Kronecker strong $\ell$-ifications as in \eqref{GeneralLlambda}, for any $k,\ell$, and any of the structures in Definition \ref{Structures-Mobius}. The answer is positive for $\ell=1$, namely for linearizations.

In order to get a linearization ${\cal L}(\la)$ as in \eqref{GeneralLlambda} with exactly the number of nonzero block entries indicated in Definition \ref{DefSparse}-(a), we set
$$
M_1(\la)= \begin{bmatrix}P_{k-1}+\la P_k\\&\ddots\\&&P_0+\la P_1\end{bmatrix},
$$
for $A=A_1$, 
$$
M_2(\la)=\left\{\begin{array}{cc}\footnotesize\begin{bmatrix}P_{k-1}+\la P_k\\&-(P_{k-3}+\la P_{k-1})\\&&\ddots\\&&&-(P_2+\la P_3)\\&&&&P_0+\la P_1\end{bmatrix},&\mbox{if $d$ is even},\\
\footnotesize\begin{bmatrix}-(P_{k-1}+\la P_k)\\&P_{k-3}+\la P_{k-1}\\&&\ddots\\&&&-(P_2+\la P_3)\\&&&&P_0+\la P_1\end{bmatrix},&\mbox{if $d$ is odd},\end{array}\right.
$$
for $A=A_2$, and 
$$
M_3(\la)=\begin{bmatrix}&&P_0+\la P_1\\&\iddots&\\P_{k-1}+\la P_k&&\end{bmatrix}
$$
for $A=A_3$. 

The matrix polynomial $M_1(\la)$ satisfies the AS condition, the matrix polynomial $M_2(\la)$ satisfies the ASS condition, and the matrix polynomial $M_3(\la)$ satisfies the DS condition, and they all have exactly $d+1$ nonzero block entries. Therefore, the pencil $\frac{1}{2}(M_i+\cM_{A_i}[\pm M_i]^\star)$, for $i=1,2,3$, is an $\cM_{A_i}$-structured pencil, provided that $P(\la)$ is $\cM_{A_i}$-structured, and has also exactly $d+1$ nonzero block entries, so the corresponding pencil \eqref{GeneralLlambda} is an $\cM_{A_i}$-structured pencil having exactly $2k-2+d+1=5d+1$ nonzero block entries. 

Looking at the previous constructions, we see that the pencils $\frac{1}{2}(M_i+\cM_{A_i}[\pm M_i]^\star)(\la)$, for $i=1,2$, have all their nonzero block entries in the main diagonal. It is not difficult to see that, in order for ${\cal L}(\la)$ as in Theorem \ref{StructLgeneral}, with $\ell=1$, to be sparse, for $A=A_1,A_2$, all nonzero block entries must be on the main diagonal, and they have the same structure as $M_1(\la)$ and $M_2(\la)$, up to multiplication by nonzero constants. Note also that the pencil $\frac{1}{2}(M_3+\cM_{A_3}[\pm M_3]^\star)(\la)$ has all its nonzero block entries on the main anti-diagonal. However, this is not necessarily the case for all sparse matrix pencils in Theorem \ref{StructLgeneral} with $A=A_3$ (see Example \ref{ExampleNoDuplicSparse}). 

We want to emphasize that the previous constructions are not new, since they are particular cases of the family introduced in \cite{dopico2019structured}. However, for $\ell>1$, the situation is different, since, as we have seen in Proposition \ref{PropSparse}, every anti-diagonal of $M(\la)$ in \eqref{GeneralLlambda} must contain, at least, one nonzero block entry. Before proceeding with the answer for the case $\ell>1$, we present a procedure for constructing sparse $\ell$-ifications as in \eqref{GeneralLlambda}, following the proof of Proposition \ref{PropSparse}. 

\subsubsection*{Procedure to construct a sparse $\boldsymbol{\cM_A}$-structured block-Kronecker strong $\ell$-ification}

We aim to present a procedure for constructing sparse $\cM_A$-structured block-Kronecker strong $\ell$-ifications $\mathcal{L}(\la)$, introduced in Definition \ref{DefSparse}, for the different structures in Definition \ref{Structures-Mobius}. This procedure is stated below. In particular, we explain how to construct the matrix polynomial $M(\la)$ in order to guarantee that the $(1,1)$ big block $\frac{1}{2}(M + \cM_A[\pm M]^\star)(\la)$ of $\mathcal{L}(\la)$ as in \eqref{GeneralLlambda} has the minimum number of nonzero block entries.

Let us assume that $P(\la) = \sum_{j=0}^k \la^j P_j \in \FF[\la]^{n \times n}$ is an $\cM_A$-structured matrix polynomial of grade $k$, with $A$ being any of the matrices in Definition \ref{Structures-Mobius}. Let $M(\la) = \sum_{i=0}^\ell \la^i M_i \in \FF[\la]^{(d+1)n \times (d+1)n}$ be a grade-$\ell$ matrix polynomial, with $k=(2d+1)\ell$, for some $d\geq 0$. Let us partition the matrices $M_i$, for $i=0,1,\hdots,\ell$, into $(d+1) \times (d+1)$ blocks, each of size $n \times n$, and let us denote the blocks by $[M_i]_{s,t} \in \FF^{n \times n}$ for $s,t = 1,2,\hdots,d+1$, and $i=0,1,\hdots,\ell$. Then, from the proof of Proposition \ref{PropSparse}, we propose the following procedure to get $M(\la)$:

\fbox{\begin{minipage}{15cm}
		{\bf Procedure.} Construct $M(\la)$ with the following requirements:
		
		\vspace{0.2cm}
		
		{\bf Case 1:} $A = A_1$ (only if $\ell=1$):
		
		\vspace{0.2cm}
		
		\hspace{0.3cm} {\rm ($1$-i)} Place the coefficients of $P(\la)$ on the main diagonal of $M(\la)$ fulfilling the {\rm AS} condition.
		
		\vspace{0.2cm}
		
		\hspace{0.3cm} {\rm ($1$-ii)} (Optionally). Add other nonzero block entries with the constraint:
		\begin{equation*}
			[M_i]_{s,t} = -\cM_{A_1}[\pm [M_i]_{t,s}]^\star = \mp [[M_i]_{t,s}]^\star.
		\end{equation*}
		
		\vspace{0.1cm}
		
		{\bf Case 2:} $A = A_2$ (only if $\ell=1$):
		
		\vspace{0.2cm}
		
		\hspace{0.3cm} {\rm ($2$-i)} Place the coefficients of $P(\la)$ on the main diagonal of $M(\la)$ fulfilling the {\rm ASS} condition.
		
		\vspace{0.2cm}
		
		\hspace{0.3cm} {\rm ($2$-ii)} (Optionally). Add other nonzero block entries with the constraint:
		\begin{equation*}
			[M_i]_{s,t} = -\cM_{A_2}[\pm [M_i]_{t,s}]^\star = \mp [[M_i]_{t,s}(-\la)]^\star.
		\end{equation*}
		
		\vspace{0.2cm}
		
		{\bf Case 3:} $A = A_3$:
		
		\vspace{0.2cm}
		
		\hspace{0.3cm} {\rm ($3$-i)} Place the coefficients of $P(\la)$ in $d+1$ nonzero block entries of $M(\la)$, for $\ell=1$, or $2d+1$, 
		
		\vspace{0.1cm}
		
		\hspace{0.3cm} for $\ell>1$, in such a way that $M(\la)$ satisfies: 
		
		\vspace{0.2cm}
		
		\hspace{1cm}(3-i.a) the {\rm DS} condition, and 
		
		\vspace{0.2cm}
		
		\hspace{1cm}(3-i.b) if $[M]_{s,t}$ is nonzero, then $[M]_{t,s}$ is also nonzero.
		
		\vspace{0.2cm}
		
		\hspace{0.3cm} {\rm ($3$-ii)} (Optionally). Add other nonzero block entries, namely $H_1,\hdots,H_r$, in the same diagonal
		
		\hspace{0.3cm} such that $\sum_{j=0}^r H_j = 0$, and, for all $H_j$: 
		\begin{equation*}
			[M_i]_{s,t} = -\cM_{A_3}[\pm [M_i]_{t,s}]^\star = \mp [[M_{\ell-i}]_{t,s}]^\star.
		\end{equation*}
	\end{minipage}}
	
	\vspace{0.4cm}

The procedure above provides the matrix polynomial $M(\la)$ that is used to construct structure-preserving strong $\ell$-ifications ${\cal L}(\la)$ as in \eqref{GeneralLlambda} of $\cM_A$-structured matrix polynomials of grade $k$, and that gives the minimum number of nonzero block entries in ${\cal L}(\la)$ (namely, the one in Definition \ref{DefSparse}). Conditions (1-i), (2-i), and (3-i) in the previous procedure are the ones imposed on the matrix polynomial $M(\la)$ to guarantee that not only $M(\la)$ has the minimum number of nonzero block entries (in the sense of Definition \ref{DefSparse}) but also that $\widetilde{M}(\la):=\frac{1}{2}\left(M + \cM_A[\pm M]^\star\right)(\la)$ has the smallest number of nonzero block entries. Some other nonzero block entries can appear in $M(\la)$, but they must cancel out in the matrix polynomial $\widetilde{M}(\la)$. This is exactly the purpose of conditions (1-ii), (2-ii), and (3-ii). Despite these conditions, the number of nonzero block entries in $M(\la)$ and $\widetilde M(\la)$ are not necessarily the same, since, when computing the polynomial $\widetilde M(\la)$, some additional nonzero block entries may arise, so the final construction could not be sparse in the sense of Definition \ref{DefSparse}. This is exactly what happens for certain structures, as the following result shows.

\begin{Prop}\label{CorollarySparse}
	Let $P(\la)$ be as in the statement of Theorem {\rm\ref{StructLgeneral}}. Then, for $\ell>1$, any $\star$-(skew-) symmetric or $\star$-alternating block-Kronecker degree-$\ell$ matrix polynomials satisfying Theorem {\rm\ref{StructLgeneral}} has, at least, $7d+1$ nonzero block entries.
\end{Prop}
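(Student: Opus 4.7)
The plan is to lower-bound the number of nonzero block entries of the $(1,1)$ big block $\widetilde{M}(\la):=\frac{1}{2}(M+\cM_A[\pm M]^\star)(\la)$ of $\mathcal{L}(\la)$ in \eqref{GeneralLlambda} when $A\in\{A_1,A_2\}$, and then add the $4d$ nonzero blocks contributed by $L_d(\la^\ell)\otimes I_n$ and $\cM_A[\pm L_d]^\top(\la^\ell)\otimes I_n$ to reach the announced count $7d+1$.

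First, exactly as in the proof of Proposition \ref{PropSparse}, the second line of the AS condition (for $A=A_1$, and for $A=A_2$ with $\ell$ even) or of the ASS condition (for $A=A_2$ with $\ell$ odd) forces every one of the $2d+1$ anti-diagonals of $\widetilde{M}(\la)$ to contain at least one nonzero block: for each $r=0,\ldots,2d$ and $c=1,\ldots,\ell-1$, the coefficient $\widetilde{M}_c$ must sum on the $(2d+2-r)$-th anti-diagonal to the (generically nonzero) coefficient $P_{\ell r+c}$ of $P(\la)$.

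Next I would strengthen this count using the structure of $\widetilde{M}(\la)$. Since $A\in\{A_1,A_2\}$ is coninvolutory, a block-wise computation from Table \ref{tableMa} shows that in all four cases ($\star$-symmetric, $\star$-skew-symmetric, $\star$-even, $\star$-odd) one has $[\widetilde{M}_i]_{s,t}=0\Longleftrightarrow[\widetilde{M}_i]_{t,s}=0$ for every $i$, and hence at the polynomial level $[\widetilde{M}(\la)]_{s,t}=0\Longleftrightarrow[\widetilde{M}(\la)]_{t,s}=0$. Since $(s,t)$ and $(t,s)$ lie on the same anti-diagonal, off-diagonal nonzero blocks come in symmetric pairs within each anti-diagonal, whereas a single nonzero block can only occupy the central position $(s,s)$ with $2s=s+t$, which exists only when $s+t$ is even.

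Then I count: among the $2d+1$ anti-diagonals indexed by $s+t\in\{2,\ldots,2d+2\}$, exactly $d+1$ have even index $s+t\in\{2,4,\ldots,2d+2\}$ and can in principle be covered by a single central block, while the remaining $d$ have odd index $s+t\in\{3,5,\ldots,2d+1\}$ and must therefore carry at least two nonzero blocks each. This gives at least $(d+1)+2d=3d+1$ nonzero block entries in $\widetilde{M}(\la)$, which combined with the $4d$ nonzero blocks of the block-Kronecker dual minimal bases yields the claimed lower bound $7d+1$.

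The only delicate point is verifying the polynomial-level symmetry $[\widetilde{M}(\la)]_{s,t}=0\Longleftrightarrow[\widetilde{M}(\la)]_{t,s}=0$ in the $\star$-alternating cases, where the coefficient-level relation $[\widetilde{M}_i]_{s,t}^\star=\pm(-1)^i[\widetilde{M}_i]_{t,s}$ contains a sign depending on $i$. However, since the relation holds for each $i$ separately and the operation $\star$ is an involution, vanishing of the whole polynomial at position $(s,t)$ is equivalent to vanishing of every coefficient at $(s,t)$, which is equivalent to vanishing of every coefficient at $(t,s)$, and hence to vanishing of $[\widetilde{M}(\la)]_{t,s}$. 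Once this equivalence is in place, the combinatorial count above finishes the proof.
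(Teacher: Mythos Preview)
Your proof is correct and follows essentially the same approach as the paper: both arguments use that each of the $2d+1$ anti-diagonals of the $(1,1)$ block must carry at least one nonzero block (from the second line of the {\rm AS}/{\rm ASS} condition for $\ell>1$), and that the $\star$-symmetry forces at least $d$ additional nonzero blocks, giving $3d+1$ in $\widetilde{M}(\la)$ and hence $7d+1$ in $\mathcal{L}(\la)$ after adding the $4d$ blocks from the minimal bases.

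The only noteworthy difference is presentational. The paper phrases the extra $d$ blocks as ``copies'' created when passing from $M(\la)$ to $\widetilde{M}(\la)=\frac{1}{2}(M\pm M^\star)(\la)$, arguing that at most $d+1$ of the $2d+1$ nonzero blocks of $M(\la)$ can sit on the main diagonal and that each off-diagonal block acquires a symmetric partner. You instead work directly with the intrinsic symmetry of $\widetilde{M}(\la)$, observing that $(s,t)$ and $(t,s)$ lie on the same anti-diagonal, so the $d$ anti-diagonals with odd $s+t$ (which do not meet the main diagonal) must each carry at least two blocks. Your formulation is slightly cleaner because it sidesteps any worry about cancellations when forming $\widetilde{M}(\la)$ from $M(\la)$; one small implicit step you rely on is that $\widetilde{M}(\la)$ itself satisfies the {\rm AS}/{\rm ASS} condition for $P(\la)$, which is established in the proof of Theorem~\ref{StructLgeneral}.
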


\begin{proof}
We first consider the $\star$-(skew-)symmetric structure. In particular, it is associated to the M\"obius transformation $\cM_{A_1}$ (see Definition \ref{Structures-Mobius}) and $M(\la)$ satisfies the {\rm AS} condition for $P(\la)$ (see Theorem \ref{NoStructLlambdaTeo}). Recall that, for $\ell>1$, the second line of the {\rm AS} condition in Definition \ref{structures} (the identity on the coefficients $P_{\ell r +c}$) implies that all the $2d+1$ anti-diagonals of $M(\la)$ have, at least, a nonzero block entry containing coefficients of $P(\la)$. Moreover, the coefficients can be grouped in just one nonzero block entry for each block anti-diagonal, as we have seen in the proof of Proposition \ref{PropSparse}. However, when we compute the matrix polynomial $\widetilde{M}(\la) = \frac{1}{2}\left(M + \cM_{A_1}[\pm M]^\star\right)(\la) = \frac{1}{2}\left(M \pm M^\star\right)(\la)$, all nonzero block entries outside the main diagonal of $M(\la)$ have a copy in $\widetilde{M}(\la)$, because of the term $M^\star(\la)$. Then, $\widetilde M(\la)$ will have, at least, $d$ more nonzero block entries than $M(\la)$, since the number of nonzero block entries of $M(\la)$ that can be placed on the main diagonal is, at most, $d+1$. In the best case, the $\star$-(skew-)symmetric block-Kronecker degree-$\ell$ matrix polynomial, has, at least, $4d+ 2d+1 +d = 7d+1$ nonzero block entries, where the $4d$ nonzero block entries come from the $(1,2)$ and $(2,1)$ big blocks in \eqref{GeneralLlambda}.  
	
The same reasoning as above can be applied for the $\star$-alternating structure, associated to the M\"obius transformation $\cM_{A_2}$ (see Definition \ref{Structures-Mobius}), where $M(\la)$ satisfies the {\rm AS} (resp. {\rm ASS}) condition for $P(\la)$ when $\ell$ is even (resp. odd) (see Theorem \ref{NoStructLlambdaTeo}).
\end{proof}

As a consequence of Proposition \ref{CorollarySparse}, there are no sparse (in the sense of Definition \ref{DefSparse}) structured block-Kronecker strong $\ell$-ifications, for $\ell>1$, and for the $\star$-(skew-)symmetric and $\star$-alternating structures.

We want to recall the recent work \cite{bueno2018explicit}, where the authors present four families of symmetric matrix pencils which, under some generic conditions, are block minimal bases pencils and strong linearizations of a given matrix polynomial. The first family, denoted by $\mathcal{O}_1^P(\la)$ and introduced in equation (4.17) in \cite{bueno2018explicit}, is the only one which is sparse in the sense of Definition \ref{DefSparse}, and the $(1,1)$ big block $\widetilde{M}(\la)$ of the strong linearization has all its nonzero block entries on the main diagonal. 

In the following example we illustrate the procedure presented after Definition \ref{DefSparse}, by explicitly constructing sparse structure-preserving strong linearizations for $\star$-skew-symmetric, $\star$-even or $\star$-anti-palindromic matrix polynomials of grade $7$, and sparse structure-preserving strong quadratifications for $\star$-palindromic matrix polynomials of grade $14$.

\begin{Exa}\label{ExampleNoDuplicSparse}
{\rm
	Let $P(\la) = \sum_{j=0}^{7} \la^j P_j \in \FF[\la]^{n \times n}$ be a matrix polynomial of grade $7$. Let us consider the following three matrix pencils:
	\begin{equation*}
		M_1(\la) := {\footnotesize \left[\begin{array}{cccc} P_6 + \la P_7 & 0 & 0 & 0 \\ 0 & P_4 + \la P_5 & 0 & 0 \\ 0 & 0 & P_2 + \la P_3 & 0 \\ 0 & 0 & 0 & P_0 + \la P_1 \end{array}\right]},
	\end{equation*}
	\begin{equation*}
	\begin{split}
		M_2(\la) &:= {\footnotesize \left[\begin{array}{cccc} -(P_6 + \la P_7) & 0 & 0 & 0 \\ 0 & P_4 + \la P_5 & 0 & -Q^\star(-\la) \\ 0 & 0 & -(P_2 + \la P_3) & 0 \\ 0 & Q(\la) & 0 & P_0 + \la P_1 \end{array}\right]},\ {\rm and} \\
		M_3(\la) &:= {\footnotesize\left[\begin{array}{cccc} 0 & P_2 + \la P_3 & 0 & P_0 + \la P_1 \\ P_4 + \la P_5 & 0 & 0 & 0 \\ 0 & 0 & 0 & 0 \\ P_6 + \la P_7 & 0 & 0 & 0 \end{array}\right]},
	\end{split}
	\end{equation*}
	where $Q(\la)$ is any matrix polynomial with coefficients in $\FF[P_0,\hdots,P_k]$ (in order for the pencil $M_2(\la)$ to provide a generalized companion linearization). 
	
	Looking at the procedure presented after Definition {\rm\ref{DefSparse}}, it is straightforward to check that $M_1(\la)$ satisfies {\rm ($1$-i)} and {\rm ($1$-ii)}, $M_2(\la)$ satisfies {\rm ($2$-i)} and {\rm ($2$-ii)}, and $M_3(\la)$ satisfies {\rm ($3$-i)} and {\rm ($3$-ii)}.
	
	Then, if $A_i$, for $i=1,2,3$, are the matrices in Definition {\rm\ref{Structures-Mobius}}, and $P(\la)$ is $\star$-skew-symmetric, $\star$-even, or $\star$-anti-palindromic, respectively (namely, $P(\la)$ is $\cM_{A_i}$-structured), the matrix pencils $\frac{1}{2}\left(M_i + \cM_{A_i}[\pm M_i]^\star\right)(\la)$ are given by 
	\begin{equation*}
		\begin{split}
			\frac{1}{2}\left(M_1 - M_1^\star\right)(\la) :=& {\footnotesize \left[\begin{array}{cccc} P_6 + \la P_7 & 0 & 0 & 0 \\ 0 & P_4 + \la P_5 & 0 & 0 \\ 0 & 0 & P_2 + \la P_3 & 0 \\ 0 & 0 & 0 & P_0 + \la P_1 \end{array}\right]},\\
			\frac{1}{2}\left(M_2(\la) + M_2^\star(-\la)\right) :=& {\footnotesize \left[\begin{array}{cccc} -(P_6 + \la P_7) & 0 & 0 & 0 \\ 0 & P_4 + \la P_5 & 0 & 0 \\ 0 & 0 & -(P_2 + \la P_3) & 0 \\ 0 & 0 & 0 & P_0 + \la P_1 \end{array}\right]},\ {\rm and}\\
			\frac{1}{2}\left(M_3 - {\rm rev}_1(M_3)^\star\right)(\la) :=& {\footnotesize \left[\begin{array}{cccc} 0 & P_2 + \la P_3 & 0 & P_0 + \la P_1 \\ P_4 + \la P_5 & 0 & 0 & 0 \\ 0 & 0 & 0 & 0 \\ P_6 + \la P_7 & 0 & 0 & 0 \end{array}\right]}.
		\end{split}
	\end{equation*}
	Then, by Theorem {\rm\ref{StructLgeneral}}, the $\cM_{A_i}$-structured block-Kronecker matrix pencils
	\begin{equation*}
		\mathcal{L}_{\cM_{A_i}}(\la) := \left[\begin{array}{c|c} \frac{1}{2}\left(M_i + \cM_{A_i}[\pm M_i]^\star\right)(\la) &  \cM_{A_i}[\pm L_3]^\top(\la) \otimes I_n \\ \hline L_3(\la) \otimes I_n & 0 \end{array}\right],\ {\rm for}\ i=1,2,3,
	\end{equation*}
	are sparse $\cM_{A_i}$-structured strong linearizations of $P(\la)$ whenever $P(\la)$ is $\cM_{A_i}$-structured.
	
	Now, let $P(\la) = \sum_{j=0}^{14} \la^j P_j \in \FF[\la]^{n \times n}$ be a $\star$-palindromic matrix polynomial of grade $14$. Let us consider the following two quadratic matrix polynomials:
	\begin{equation*}
		\begin{split}
			M_4(\la) & := {\footnotesize \left[\begin{array}{cccc} 0 & \la P_5 + \la^2 P_6 & Q(\la) & P_0 + \la P_1 + \la^2 P_2 \\ P_8 + \la P_9 & 0 & 0 & -Q(\la) + \la P_3 + \la^2 P_4 \\ -{\rm rev}_2Q^\star(\la) & 0 & \la P_7 & 0 \\ P_{12} + \la P_{13} + \la^2 P_{14} & {\rm rev}_2Q^\star(\la) + P_{10}+\la P_{11} & 0 & 0 \end{array}\right]},\ {\rm and} \\
			M_5(\la) &:= {\footnotesize \left[\begin{array}{cccc} 0 & \la P_5 + \la^2 P_6 & 0 & P_0 + \la P_1 + \la^2 P_2 \\ P_8 + \la P_9 & 0 & 0 & \la P_3 + \la^2 P_4 \\ 0 & 0 & \la P_7 & 0 \\ \la P_{13} + \la^2 P_{14} & P_{10} + \la P_{11} + \la^2 P_{12} & 0 & 0 \end{array}\right]}, 
		\end{split}
	\end{equation*}
	where $Q(\la)$ is any matrix polynomial with coefficients in $\FF[P_0,\hdots,P_k]$, in order for the construction to be a generalized companion $\ell$-ification.
	
	Notice that, looking at the procedure presented after Definition {\rm\ref{DefSparse}}, $M_4(\la)$ and $M_5(\la)$ satisfy {\rm ($3$-i)} and {\rm ($3$-ii)}. Now, computing the quadratic matrix polynomials $\frac{1}{2}\left(M_i + \cM_{A_3}[M_i]^\star\right)(\la)$, for $i=4,5$, we obtain
	\begin{equation*}
		\begin{split}
			\frac{1}{2}\left(M_4 + {\rm rev}_2 (M_4)^\star\right)(\la) & := {\footnotesize \left[\begin{array}{cccc} 0 & \la P_5 + \la^2 P_6 & 0 & P_0 + \la P_1 + \la^2 P_2 \\ P_8 + \la P_9 & 0 & 0 & \la P_3 + \la^2 P_4 \\ 0 & 0 & \la P_7 & 0 \\ P_{12} + \la P_{13} + \la^2 P_{14} & P_{10}+\la P_{11} & 0 & 0 \end{array}\right]},\ {\rm and\ } \\
			\frac{1}{2}\left(M_5 + {\rm rev}_2 (M_5)^\star\right)(\la) & := {\footnotesize \left[\begin{array}{cccc} 0 & \la P_5 + \la^2 P_6 & 0 & P_0 + \la P_1 + \la^2 P_2/2 \\ P_8 + \la P_9 & 0 & 0 & P_2/2 + \la P_3 + \la^2 P_4 \\ 0 & 0 & \la P_7 & 0 \\ P_{12}/2 + \la P_{13} + \la^2 P_{14} & P_{10} + \la P_{11} + \la^2 P_{12}/2 & 0 & 0 \end{array}\right]}.
		\end{split}
	\end{equation*} 
	By Theorem {\rm\ref{StructLgeneral}}, the $\star$-palindromic block-Kronecker quadratic matrix polynomials
	\begin{equation*}
		\mathcal{L}_{{\cal P},i}(\la) := \begin{bmatrix} \frac{1}{2}\left(M_i + {\rm rev}_2 (M_i)^\star\right)(\la) & {\rm rev}_2 (L_3)^\star(\la^2) \otimes I_n \\ L_3(\la^2) \otimes I_n & 0 \end{bmatrix},\ {\rm for}\ i=4,5,
	\end{equation*}
	are sparse $\star$-palindromic strong quadratifications of $P(\la)$ whenever $P(\la)$ is a $\star$-palindromic polynomial.
}
\end{Exa}

After looking at the constructions in Example \ref{ExampleNoDuplicSparse}, a couple of interesting observations raise up. The first one is that even though $\mathcal{L}_{{\cal P},5}(\la)$ is sparse (according to Definition \ref{DefSparse}), it contains duplicated coefficients of $P(\la)$ (in particular, $P_2$ and $P_{12}$ appear twice). The second one is that all the nonzero block entries of the $(1,1)$ big block in $\mathcal{L}_{\cM_{A_i}}(\la)$, for $i=1,2$, are on the main diagonal. However, this is not true for $\mathcal{L}_{\cM_{A_3}}(\la)$. 

The block-Kronecker minimal bases $\ell$-ifications in Theorem \ref{StructLgeneral} are particular cases of the strong block minimal bases $\ell$-ifications. These $\ell$-ifications allow for more flexibility in the $(1,2)$ and $(2,1)$ big blocks (the ones containing the minimal bases). As an example, we are going to construct a block minimal bases structure-preserving strong $\ell$-ification as in \eqref{StructLSBlock} that contains some additional invertible matrices in its $(1,2)$ and $(2,1)$ big blocks. The idea is to replace the block $L_d(\la^\ell)\otimes I_n$ in \eqref{StructLSBlockKronecker} by another minimal basis whose dual minimal basis is still the block-Kronecker minimal basis $\Lambda_d(\la^\ell)\otimes I_n$ in \eqref{Lambdad}. Then, the construction provides an $\cM_A$-structured strong block minimal bases grade-$\ell$ matrix polynomial as in \eqref{StructLSBlock}, which is a strong $\ell$-ification of a given $\cM_A$-structured matrix polynomial as in \eqref{StructPlambdaKronecker}. This is shown in Example \ref{ExampleInvMatrices}, where, in particular, we construct a structure-preserving strong cubification $(\ell=3)$ of a given $\star$-symmetric matrix polynomial of grade $21$.

\begin{Exa}\label{ExampleInvMatrices}
{\rm
	Let $P(\la) = \sum_{j=0}^{21} \la^j P_j \in \FF[\la]^{n \times n}$ be a $\star$-symmetric matrix polynomial of grade $21$, and let $X,Y \in \FF^{n \times n}$ be invertible matrices. Let us consider the following matrix pencils:
	\begin{equation*}
	\begin{split}
	M(\la) :=& {\footnotesize \left[\begin{array}{cccc} P_{18} + \la P_{19} + \la^2 P_{20} + \la^3 P_{21} & 0 & 0 & P_9 + \la P_{10} + \la^2 P_{11} + \la^3 P_{12} \\ P_{15}+ \la P_{16} + \la^2 P_{17} & \la P_{13} + \la^2 P_{14} & 0 & 0 \\ 0 & 0 & P_6 + \la P_7 + \la^2 P_8 & P_3 + \la P_4 + \la^2 P_5 \\ 0 & 0 & 0 & P_0 + \la P_1 + \la^2 P_2 \end{array}\right]},\ {\rm and}\\
	K(\la) :=& {\footnotesize \left[\begin{array}{cccc} -X & \la X & 0 & 0 \\ 0 & -Y & \la Y & 0 \\ 0 & 0 & -I_n & \la I_n \end{array}\right]}.
	\end{split}
	\end{equation*}
	$M(\la)$ satisfies the {\rm AS} condition for $P(\la)$ (Definition {\rm\ref{structures}}). If $K(\la^3)$ is the matrix polynomial $K(\la)$ evaluated at $\la^3$, notice that both $K(\la)$ and $K(\la^3)$ are minimal bases. Moreover, the matrix polynomial $\Lambda_3^\top(\la^3) \otimes I_n := \begin{bmatrix} \la^9 I_n & \la^6 I_n & \la^3 I_n & I_n \end{bmatrix}$ is a minimal basis dual to $K(\la^3)$ (Definition {\rm\ref{dualminbases}}).
	
	By Theorem {\rm\ref{Ma-strucstrong}}, the $\star$-symmetric strong block minimal bases cubic matrix polynomial
	\begin{equation*}
	\mathcal{L}_{{\cal S}}(\la) := \begin{bmatrix} \frac{1}{2}\left(M + M^\star\right)(\la) & K^\star(\la^3) \\ K(\la^3) & 0 \end{bmatrix}
	\end{equation*}
	is a $\star$-symmetric strong cubification of $P(\la):= \left(\Lambda_3^\top(\la^3)\otimes I_n\right)\left(\frac{1}{2}(M + M^\star)(\la)\right)\left(\Lambda_3(\la^3)\otimes I_n\right)$.
}
\end{Exa}

\section{There are no $\boldsymbol{\cM_A}$-structured companion quadratifications for $\boldsymbol{\cM_A}$-structured quartic matrix polynomials}\label{SectionNoStructCompLific}

The wide family of structure-preserving strong $\ell$-ifications of a given $\cM_A$-structured matrix polynomial $P(\la)$ of grade $k$, constructed in Theorem \ref{StructLgeneral}, deals with matrix polynomials which are generalized companion $\ell$-ifications of $P(\la)$ (Definition \ref{GenCompLification}). However, in this section we focus on companion $\ell$-ifications (Definition \ref{CompLification}). In particular, we will prove, in Theorem \ref{NoStructCompLific}, that, for all the structures in Definition \ref{Structures-Mobius}, there are no $\cM_A$-structured companion quadratifications for any $\cM_A$-structured quartic matrix polynomial $P(\la)$.

The notion of companion $\ell$-ification is more restrictive than the one of generalized companion $\ell$-ification (as the names suggest). However, in the literature, the most well-known families of linearizations (that is, $\ell$-ifications with $\ell=1$) are companion. We refer, in particular, to the classical Frobenius companion pencils and to all families of Fiedler-like pencils (Fiedler, generalized Fiedler, and Fiedler pencils with repetition). Even the more recent classes of block-Kronecker linearizations \cite{dopico2018block} and $\ell$-ifications \cite{dopico2019block} contain many companion linearizations/$\ell$-ifications. Moreover, it is natural to focus on companion $\ell$-ifications, since they are the simplest structures which are strong $\ell$-ifications and valid for all polynomials. 

In \cite[Th. 7.20]{de2014spectral}, it was proved that there are no structured generalized companion pencils for structured matrix polynomials of even grade $k$, for any of the following structures: $\star$-symmetric, $\star$-alternating, and $\star$-palindromic. We have seen that, however, there are structured generalized companion $\ell$-ifications when $k = (2d+1)\ell$. Moreover, our constructions include structured companion $\ell$-ifications for any of the structures considered in this paper. 

It is natural to ask whether for $k = (2d) \ell$ there are structured (generalized) companion $\ell$-ifications or not. 

In \cite{huang2011palindromic}, a family of companion-like palindromic quadratifications for palindromic matrix polynomials of any even degree has been presented. This family, however, is not generalized companion, since it involves the transpose (or conjugate transpose) of some of the coefficients of the polynomial. Nonetheless, based in this construction, it is not difficult to construct a generalized companion quadratification for quartic matrix polynomials which is $\star$-palindromic whenever the polynomial is. More precisely:
\begin{equation*}
L(\la) = {\footnotesize \left[\begin{array}{cc} P_1 + \la (P_2 - I - P_0P_4) + \la^2 P_3 & I + \la^2 P_4 \\ P_0 + \la^2 I & -\la I \end{array}\right]}
\end{equation*}
is a strong quadratification of $P(\la) = \sum_{j=0}^4 \la^j P_j$. In order to see that it is a quadratification just perform the following elementary block-column and block-row operations:
\begin{equation*}
\begin{split}
L(\la) & \xrightarrow{C_{12}(\la I)} {\footnotesize \left[\begin{array}{cc} P_1 + \la (P_2 - P_0P_4) + \la^2 P_3 + \la^3 P_4 & I + \la^2 P_4 \\ P_0 & -\la I \end{array}\right]} \xrightarrow{R_{12}(\la P_4)} {\footnotesize \left[\begin{array}{cc} P_1 + \la P_2 + \la^2 P_3 + \la^3 P_4 & I \\ P_0 & -\la I \end{array}\right]} \\
& \xrightarrow{R_{21}(\la I)} {\footnotesize \left[\begin{array}{cc} P_1 + \la P_2 + \la^2 P_3 + \la^3 P_4 & I \\ P(\la) & 0 \end{array}\right]} \xrightarrow{C_{12}(-(P_1 + \la P_2 + \la^2 P_3 + \la^3 P_4))} {\footnotesize \left[\begin{array}{cc} 0 & I \\ P(\la) & 0 \end{array}\right]} \xrightarrow{R_{12}} {\footnotesize \left[\begin{array}{cc} P(\la) & 0 \\ 0 & I \end{array}\right]}.
\end{split}
\end{equation*}
To see that it is strong, we perform the following elementary block-column and block-row operations on the polynomial ${\rm rev}_2L(\la)$:
\begin{equation*}
\begin{split}
{\rm rev}_2L(\la) &= {\footnotesize \left[\begin{array}{cc} P_3 + \la (P_2 - I - P_0P_4) + \la^2 P_1 & P_4 + \la^2 I \\ I + \la^2 P_0 & -\la I \end{array}\right]} \xrightarrow{R_{12}(\la I)} {\footnotesize \left[\begin{array}{cc} P_3 + \la (P_2 - P_0P_4) + \la^2 P_1 + \la^3 P_0 & P_4 \\ I + \la^2 P_0 & -\la I \end{array}\right]}  \\
& \xrightarrow{C_{12}(\la P_0)} {\footnotesize \left[\begin{array}{cc} P_3 + \la P_2 + \la^2 P_1 + \la^3 P_0 & P_4 \\ I & -\la I \end{array}\right]} \xrightarrow{C_{21}(\la I)} {\footnotesize \left[\begin{array}{cc} P_3 + \la P_2 + \la^2 P_1 + \la^3 P_0 & {\rm rev}_4P(\la) \\ I & 0 \end{array}\right]} \\ 
& \xrightarrow{R_{12}(-(P_3 + \la P_2 + \la^2 P_1 + \la^3 P_0))} {\footnotesize \left[\begin{array}{cc} 0 & {\rm rev}_4P(\la) \\ I & 0 \end{array}\right]} \xrightarrow{C_{12}} {\footnotesize \left[\begin{array}{cc} {\rm rev}_4P(\la) & 0 \\ 0 & I \end{array}\right]}.
\end{split}
\end{equation*}
Finally, it is straightforward to see that $L(\la)$ is $\star$-palindromic whenever $P(\la)$ is $\star$-palindromic, since, in this case $P_0 = P_4^\star$, $P_1 = P_3^\star$, and $P_2 = P_2^\star$, so:
\begin{equation*}
\begin{split}
L^\star(\la) =&{\footnotesize \left[\begin{array}{cc} P_1^\star + \la (P_2^\star - I - P_4^\star P_0^\star) + \la^2 P_3^\star & P_0^\star + \la^2 I \\ I + \la^2 P_4^\star & -\la I \end{array}\right]} = \\
=&{\footnotesize \left[\begin{array}{cc} P_3 + \la (P_2 - I - P_0 P_4) + \la^2 P_1 & P_4 + \la^2 I \\ I + \la^2 P_0 & -\la I \end{array}\right]} = {\rm rev}_2 L(\la).
\end{split}
\end{equation*}
As a consequence, there are $\star$-palindromic generalized companion quadratifications of $\star$-palindromic quartic matrix polynomials. However, if we remove the adjective ``generalized" and we restrict ourselves to companion quadratifications, Theorem \ref{NoStructCompLific} tells us that this is no longer the case. 

\begin{Teo}\label{NoStructCompLific}
	Let $P(\la) = \sum_{j=0}^4 \la^j P_j \in \FF[\la]^{n \times n}$ be an $\cM_A$-structured quartic matrix polynomial, with $A$ being any of the matrices in Definition {\rm\ref{Structures-Mobius}}. Then, there are no $\cM_A$-structured companion quadratifications for $P(\la)$.
\end{Teo}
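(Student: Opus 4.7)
The plan is to argue by contradiction. Suppose $L(\la)=L_0+\la L_1+\la^2 L_2$ is an $\cM_A$-structured companion quadratification of $P(\la)=\sum_{j=0}^4 \la^j P_j$, where $L_0,L_1,L_2$ are $2\times 2$ block matrices with $n\times n$ blocks, and each block is $0$, $\alpha I_n$, or $\alpha P_j$ for some $\alpha\in\FF\setminus\{0\}$ and $j\in\{0,\ldots,4\}$.

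The first step is to translate the structure identity $\cM_A[\pm L]=L^\star$ into combinatorial conditions on the template. For the $\star$-(skew-)symmetric and $\star$-alternating cases the pairing is between positions $[L_i]_{st}$ and $[L_i]_{ts}$ inside the same $L_i$; the requirement that the template must also behave correctly on \emph{non}-structured $P$'s forces both paired entries to carry the same monomial label (both are $\alpha I_n$, or both are $\alpha P_j$ for the same $j$, and the scalars coincide up to a sign). For the $\star$-(anti-)palindromic cases the pairing is instead between $L_0$ and $L_2$, coupling $[L_0]_{st}=\alpha P_j$ with $[L_2]_{ts}=\pm\alpha P_{4-j}$, together with a self-pairing inside $L_1$ that swaps $P_j$ with $P_{4-j}$, so that the diagonal entries of $L_1$ are restricted to $\{0,\alpha I_n,\alpha P_2\}$.

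The second step exploits the companion condition. Since $L(\la)$ must be a strong quadratification of \emph{every} quartic $P(\la)$, we have $\det L(\la) = c\,\det P(\la)$ for a nonzero constant $c$ independent of $P$. Specializing to $n=1$ and treating $P_0,\ldots,P_4$ as independent indeterminates, the scalar determinant $L_{11}L_{22}-L_{12}L_{21}$ must be linear in each $P_j$. This forces a separation principle: in the expansion of the $2\times 2$ determinant, the products of two template entries that both involve the $P_j$'s must cancel pairwise at every power of $\la$, severely restricting where the different $P_j$'s can sit in the template.

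The third step is a short case analysis, organized by the position of the leading coefficient $P_4$ (and, for the (anti-)palindromic structures, of its paired coefficient $P_0$). The separation principle from Step 2 reduces the list of admissible templates to a handful; the pairing rules from Step 1 then force each such template to violate either the degree-$4n$ requirement on $\det L(\la)$ or its linearity in each $P_j$, yielding the desired contradiction. The main difficulty will be keeping this enumeration short and uniform across the three different pairing patterns, since the (skew-)symmetric/alternating pairings are internal to each $L_i$ while the (anti-)palindromic one exchanges $L_0$ and $L_2$ and moreover conjugates the $P$-index $j\mapsto 4-j$.
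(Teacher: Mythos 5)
Your proposal follows essentially the same route as the paper's proof: reduce to the scalar case $n=1$, exploit the determinant identity $\det L(\la)=\alpha\,p(\la)$ together with the fact that the right-hand side is linear in each $p_j$, translate the structure identity into pairing constraints on the template entries, and then run a case analysis organized around where $P_0$ and $P_4$ can appear in the template. The paper implements exactly these steps (it even splits the case analysis by the positions of $p_0$ and $p_4$ and treats the $\star$-symmetric and $\star$-palindromic pairings separately because the latter exchanges $L_0$ and $L_2$), so your plan is correct and essentially identical in method, apart from small phrasing imprecisions (e.g.\ the ``separation principle'' as stated is a bit looser than what is actually used, and the reason the pairing forces matching monomial labels is the companion condition, not behavior on non-structured $P$'s).
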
 

\begin{proof}
	We focus on the scalar case (namely, $n=1$) for simplicity, because in the case of existing a companion quadratification like the one mentioned in the statement, it should be valid for all values of $n$. We procced by contradiction. Notice that, for scalar polynomials, $p^\star(\la)$ can be either $p(\la)$ (if $\star = \top$) or $\overline{p}(\la)$ (if $\star = \ast$). In addition, when $p(\la)$ is scalar, condition (b) in Definition \ref{CompLification} can be replaced by the following one:
	\begin{equation}\label{detL=ap}
	\det L(\la) = \alpha p(\la) = \alpha \sum_{j=0}^k \la^j p_j,
	\end{equation}
	for some $0 \neq \alpha \in \FF$.
	
	We are going to prove that there are no $\cM_A$-structured companion quadratifications for $\cM_A$-structured quartic scalar polynomials. So let $p(\la) = \sum_{j=0}^4 \la^j p_j$ be a quartic scalar polynomial, with $p_j \in \FF$, for $j=0,1,\hdots,4$, and let
	\begin{equation*}
	L(\la) = \begin{bmatrix} l_{1,1}(\la) & l_{1,2}(\la) \\ l_{2,1}(\la) & l_{2,2}(\la) \end{bmatrix}
	\end{equation*}
	be a quadratic matrix polynomial, where $l_{s,t}(\la) = \sum_{i=0}^2 \la^i (l_i)_{s,t}$, for $s,t = 1,2$, (with $(l_i)_{s,t} \in \FF$ for all values of $s$, $t$, and $i$). 
	
	We consider only the $\star$-symmetric case (since the procedure for the $\star$-skew-symmetric and $\star$-alternating cases are very similar to this one) and the $\star$-palindromic case (since the procedure for the $\star$-anti-palindromic case is very similar to this one).
	
	First, we consider the $\star$-symmetric case. More precisely, we assume that $L(\la)$ is $\star$-symmetric whenever $p(\la)$ is $\star$-symmetric, and that $L(\la)$ is a companion quadratification of $p(\la)$. If $p(\la)$ is $\star$-symmetric, it satisfies $p^\star(\la) = p(\la)$, namely,
	\begin{equation}\label{CondTSymmP}
	p_j^\star = p_j,\ {\rm for}\ 0 \leq j \leq 4,
	\end{equation}
	whereas, if $L(\la)$ is $\star$-symmetric, it satisfies $L^\star(\la) = L(\la)$, that is,
	\begin{equation}\label{CondTSymmL}
	\begin{split}
	(l_i)_{s,s}^\star & = (l_i)_{s,s},\ {\rm for}\ i=0,1,2,\ {\rm and}\ s=1,2,\\
	(l_i)_{1,2}^\star & = (l_i)_{2,1},\ {\rm for}\ i=0,1,2.
	\end{split}
	\end{equation}
	Now, equation \eqref{detL=ap} is equivalent to
	\begin{equation}\label{detLS=alphap}
	{\rm det} \begin{bmatrix} (l_0)_{1,1} + \la (l_1)_{1,1} + \la^2 (l_2)_{1,1} & (l_0)_{1,2} + \la (l_1)_{1,2} + \la^2 (l_2)_{1,2} \\ (l_0)_{1,2}^\star + \la (l_1)_{1,2}^\star + \la^2 (l_2)_{1,2}^\star & (l_0)_{2,2} + \la (l_1)_{2,2} + \la^2 (l_2)_{2,2} \end{bmatrix} = \alpha \sum_{j=0}^4 \la^j p_j.
	\end{equation}
	Spanning the determinant in the left-hand side of \eqref{detLS=alphap}, and equating the coefficients of the terms in $\la$ with the same degree in both sides, we get, for the coefficients of $\la^0$ and $\la^4$, the following equations:
	\begin{align}
	\alpha p_0 & = (l_0)_{1,1}(l_0)_{2,2} - (l_0)_{1,2}(l_0)_{1,2}^\star, \label{p0S}\\
	\alpha p_4 & = (l_2)_{1,1}(l_2)_{2,2} - (l_2)_{1,2}(l_2)_{1,2}^\star. \label{p4S}
	\end{align}
	Looking at \eqref{p0S}, the coefficient $p_0$ must appear in the right-hand side of the equation with degree $1$. In order for this to happen, and taking into account that $L(\la)$ is a companion quadratification (namely, $(l_i)_{s,t}$ is either $\pm \beta$ or $\pm \beta p_j$, for some $\beta \in \FF$ and $j=0,\hdots,4$), the only possibility is that it appears in the product $(l_0)_{1,1}(l_0)_{2,2}$. As a consequence, it must be $(l_0)_{1,2}=0$. Then, there are only two possibilities for \eqref{p0S}, namely
	\begin{enumerate}
		\item[($S_{0.1}$):] $(l_0)_{1,1} = \beta_0$ and $(l_0)_{2,2} = \alpha_0 p_0$, where $\beta_0\alpha_0 = \alpha \neq 0$, and $(l_0)_{1,2}=0$.
		\item[($S_{0.2}$):] $(l_0)_{1,1} = \alpha_0p_0$ and $(l_0)_{2,2} = \beta_0$, where $\beta_0\alpha_0 = \alpha \neq 0$, and $(l_0)_{1,2}=0$.
	\end{enumerate}
	Using the same argument for the coefficient $p_4$ in \eqref{p4S}, there are another two possibilities, namely,
	\begin{enumerate}
		\item[($S_{4.1}$):] $(l_2)_{1,1} = \beta_4$ and $(l_2)_{2,2} = \alpha_4 p_4$, where $\beta_4\alpha_4 = \alpha \neq 0$, and $(l_2)_{1,2}=0$.
		\item[($S_{4.2}$):] $(l_2)_{1,1} = \alpha_4 p_4$ and $(l_2)_{2,2} = \beta_4$, where $\beta_4\alpha_4 = \alpha \neq 0$, and $(l_2)_{1,2}=0$.
	\end{enumerate}
	Now, spanning the determinant in the left-hand side of \eqref{detLS=alphap} and equating the coefficients of $\la$ and $\la^3$, we get
	\begin{align}
	\alpha p_1 & = (l_0)_{1,1}(l_1)_{2,2} + (l_1)_{1,1}(l_0)_{2,2} - (l_0)_{1,2}(l_1)_{1,2}^\star - (l_1)_{1,2}(l_0)_{1,2}^\star, \label{p1S}\\
	\alpha p_3 & = (l_1)_{1,1}(l_2)_{2,2} + (l_2)_{1,1}(l_1)_{2,2} - (l_1)_{1,2}(l_2)_{1,2}^\star - (l_2)_{1,2}(l_1)_{1,2}^\star. \label{p3S}
	\end{align}
	Depending on ($S_{0.1}$) and ($S_{0.2}$) for $p_0$, or ($S_{4.1}$) and ($S_{4.1}$) for $p_4$, \eqref{p1S} and \eqref{p3S} can be rewritten as follows:
	\begin{align}
	& \left\{\begin{array}{c}{\rm for}\ (S_{0.1}): \alpha p_1 = \beta_0(l_1)_{2,2} + (l_1)_{1,1}\alpha_0p_0, \\
	{\rm for}\ (S_{0.2}): \alpha p_1 = \alpha_0p_0(l_1)_{2,2} + (l_1)_{1,1}\beta_0. \end{array}\right. \label{p1S12} \\
	& \left\{\begin{array}{c} {\rm for}\ (S_{4.1}): \alpha p_3 = (l_1)_{1,1}\alpha_4p_4 + \beta_4(l_1)_{2,2}, \\
	{\rm for}\ (S_{4.2}): \alpha p_3 = (l_1)_{1,1}\beta_4 + \alpha_4p_4(l_1)_{2,2}. \end{array}\right. \label{p3S12}
	\end{align}
	Looking at \eqref{p1S12}, the coefficient $p_1$ must appear in the right-hand side of both equations with degree $1$, and the only possibility is that it appears in either the term $\beta_0(l_1)_{2,2}$ (for ($S_{0.1}$)), or $(l_1)_{1,1}\beta_0$ (for ($S_{0.2}$)). In the first case, it must be $(l_1)_{1,1} = 0$, whereas in the second case it must be $(l_1)_{2,2} = 0$. As a consequence, we get the following two possibilities:
	\begin{enumerate}
		\item[($S_{1.1}$):] {\rm ($S_{0.1}$)}, together with $(l_1)_{2,2} = \alpha_1 p_1$, where $\alpha_1\beta_0 = \alpha \neq 0$, and $(l_1)_{1,1}=0$.
		\item[($S_{1.2}$):] {\rm ($S_{0.2}$)}, together with $(l_1)_{1,1} = \alpha_1 p_1$, where $\alpha_1\beta_0 = \alpha \neq 0$, and $(l_1)_{2,2}=0$.
	\end{enumerate}
	Replacing the terms obtained in the two precedent cases in any of the two equations in \eqref{p3S12}, we get that the coefficient $p_3$ cannot appear in the right-hand side of any of the equations with degree $1$, in contradiction with the left-hand side.
	
	Therefore, none of the cases may occur, so there is no $\star$-symmetric companion quadratification for $\star$-symmetric quartic polynomials. 
	
	Now, we consider the $\star$-palindromic case. More precisely, we assume that $L(\la)$ is $\star$-palindromic whenever $p(\la)$ is $\star$-palindromic, and that $L(\la)$ is a companion quadratification of $p(\la)$. If $p(\la)$ is $\star$-palindromic, it satisfies $p^\star(\la) = {\rm rev}_4p(\la)$, namely, 
	\begin{equation}\label{condPalP}
	\begin{split}
	p_j^\star = p_{4-j},\ {\rm for}\ 0 \le j \le 2,
	\end{split}
	\end{equation}
	whereas, if $L(\la)$ is $\star$-palindromic, it satisfies
	\begin{equation*}
	L^\star(\la) := \begin{bmatrix} l_{1,1}^\star(\la) & l_{2,1}^\star(\la) \\ l_{1,2}^\star(\la) & l_{2,2}^\star(\la) \end{bmatrix} = {\rm rev}_2 L(\la) := \begin{bmatrix} {\rm rev}_2 l_{1,1}(\la) & {\rm rev}_2 l_{1,2}(\la) \\ {\rm rev}_2 l_{2,1}(\la) & {\rm rev}_2 l_{2,2}(\la) \end{bmatrix},\ {\rm that\ is}
	\end{equation*}
	\begin{equation}\label{condPalL}
	\begin{split}
	(l_i)_{s,s}^\star & = (l_{2-i})_{s,s},\ {\rm for}\ i = 0,1,\ {\rm and}\ s=1,2,\\
	(l_i)_{1,2}^\star & = (l_{2-i})_{2,1},\ {\rm for}\ i = 0,1.
	\end{split}
	\end{equation}
	Now, \eqref{detL=ap} is equivalent to
	\begin{equation}\label{detLP=alphap}
	{\rm det} \begin{bmatrix} (l_0)_{1,1} + \la (l_1)_{1,1} + \la^2 (l_0)_{1,1}^\star & (l_0)_{1,2} + \la (l_1)_{1,2} + \la^2 (l_2)_{1,2} \\ (l_2)_{1,2}^\star + \la (l_1)_{1,2}^\star + \la^2 (l_0)_{1,2}^\star & (l_0)_{2,2} + \la (l_1)_{2,2} + \la^2 (l_0)_{2,2}^\star \end{bmatrix} = \alpha \sum_{j=0}^4 \la^j p_j.
	\end{equation}
	Spanning the determinant in the left-hand side of \eqref{detLP=alphap}, and equating the coefficients for $\la^j$ and $\la^{4-j}$ in both sides, for $j=0,1,2$, we three identities in the form: $\alpha p_j = f_j$, and $\alpha p_{4-j} = f_j^\star$, for some function $f$ depending on the coefficients $(l_i)_{s,t}$, for $1 \le s,t \le 2$ and $0 \le i \le 2$. This, in particular, implies that, if $\star = \ast$, then it must be $\alpha = \overline{\alpha}$ (that is, $\alpha \in \mathbb{R}$). From the practical point of view, these two identities mean that the conditions we get for $p_j$ and $p_{4-j} = p_j^\star$ are exactly the same. Therefore, we will focus only on the equations for the coefficients $p_j$, for $j=0,1,2$. First, spanning the determinant in the left-hand side of \eqref{detLP=alphap} and equating the coefficients of $\la^0$, we get the following equation:
	\begin{align}
	\alpha p_0 & = (l_0)_{1,1}(l_0)_{2,2} - (l_0)_{1,2}(l_2)_{1,2}^\star. \label{p0P}
	\end{align}
	Looking at \eqref{p0P}, the coefficient $p_0$ must appear in the right-hand side of the equation with degree $1$, and there are three different possibilites:
	\begin{itemize}
		\item $(l_0)_{1,1}(l_0)_{2,2} = \alpha_1 p_0$ and $(l_0)_{1,2}(l_2)_{1,2}^\star = \gamma_1 p_0$, where $0 \neq \alpha_1 \in \FF$, $0 \neq \gamma_1 \in \FF$ and $\alpha_1 -\gamma_1 = \alpha$.
		\item $(l_0)_{1,1}(l_0)_{2,2} = \alpha p_0$ and $(l_0)_{1,2}(l_2)_{1,2}^\star = 0$.
		\item $(l_0)_{1,1}(l_0)_{2,2} = 0$ and $(l_0)_{1,2}(l_2)_{1,2}^\star = \alpha p_0$.	
	\end{itemize}
	These three possibilities can be subdivided in four cases each, depending on the possible values of the coefficients of $L(\la)$, namely:
	\begin{enumerate}
		\item[{\rm ($P_{0.1}$)}:] $(l_0)_{1,1} = \alpha_0 p_0$ and $(l_0)_{2,2} = \beta_0$, where $\beta_0\alpha_0 = \alpha_1 \neq 0$, $(l_0)_{1,2} = \gamma_0 p_0$ and $(l_2)_{1,2}^\star = \delta_0$, where $\delta_0\gamma_0 = \gamma_1 \neq 0$. 
		\item[{\rm ($P_{0.2}$)}:] $(l_0)_{1,1}= \alpha_0 p_0$ and $(l_0)_{2,2} = \beta_0$, where $\beta_0\alpha_0 = \alpha_1 \neq 0$, $(l_0)_{1,2} = \delta_0$ and $(l_2)_{1,2}^\star = \gamma_0 p_0$, where $\delta_0\gamma_0 = \gamma_1 \neq 0$. 
		\item[{\rm ($P_{0.3}$)}:] $(l_0)_{1,1} = \beta_0$ and $(l_0)_{2,2} = \alpha_0 p_0$, where $\beta_0\alpha_0 = \alpha_1 \neq 0$, $(l_0)_{1,2} = \gamma_0 p_0$ and $(l_2)_{1,2}^\star = \delta_0$, where $\delta_0\gamma_0 = \gamma_1 \neq 0$. 
		\item[{\rm ($P_{0.4}$)}:] $(l_0)_{1,1} = \beta_0$ and $(l_0)_{2,2} = \alpha_0 p_0$, where $\beta_0\alpha_0 = \alpha_1 \neq 0$, $(l_0)_{1,2} = \delta_0$ and $(l_2)_{1,2}^\star = \gamma_0 p_0$, where $\delta_0\gamma_0 = \gamma_1 \neq 0$. 
		\item[{\rm ($P_{0.5}$)}:] $(l_0)_{1,1} = \alpha_0 p_0$ and $(l_0)_{2,2} = \beta_0$, where $\beta_0\alpha_0 = \alpha \neq 0$, and $(l_0)_{1,2} = 0$.
		\item[{\rm ($P_{0.6}$)}:] $(l_0)_{1,1} = \alpha_0 p_0$ and $(l_0)_{2,2} = \beta_0$, where $\beta_0\alpha_0 = \alpha \neq 0$, and $(l_2)_{1,2}^\star = 0$.
		\item[{\rm ($P_{0.7}$)}:] $(l_0)_{1,1} = \beta_0$ and $(l_0)_{2,2} = \alpha_0 p_0$, where $\beta_0\alpha_0 = \alpha \neq 0$, and $(l_0)_{1,2} = 0$.
		\item[{\rm ($P_{0.8}$)}:] $(l_0)_{1,1} = \beta_0$ and $(l_0)_{2,2} = \alpha_0 p_0$, where $\beta_0\alpha_0 = \alpha \neq 0$, and $(l_2)_{1,2}^\star = 0$.
		\item[{\rm ($P_{0.9}$)}:] $(l_0)_{1,2} = \alpha_0 p_0$ and $(l_2)_{1,2}^\star = \beta_0$, where $\beta_0\alpha_0 = \alpha \neq 0$, and $(l_0)_{1,1} = 0$.
		\item[{\rm ($P_{0.10}$)}:] $(l_0)_{1,2} = \alpha_0 p_0$ and $(l_2)_{1,2}^\star = \beta_0$, where $\beta_0\alpha_0 = \alpha \neq 0$, and $(l_0)_{2,2} = 0$.
		\item[{\rm ($P_{0.11}$)}:] $(l_0)_{1,2} = \beta_0$ and $(l_2)_{1,2}^\star = \alpha_0 p_0$, where $\beta_0\alpha_0 = \alpha \neq 0$, and $(l_0)_{1,1} = 0$.
		\item[{\rm ($P_{0.12}$)}:] $(l_0)_{1,2} = \beta_0$ and $(l_2)_{1,2}^\star = \alpha_0 p_0$, where $\beta_0\alpha_0 = \alpha \neq 0$, and $(l_0)_{2,2} = 0$.
	\end{enumerate}
	We are going to consider each case separately. Let us directly look at the coefficient of $\la^2$ in \eqref{detLP=alphap}. We equate in both sides and use the particular conditions on the coefficients of $L(\la)$ obtained in ($P_{0.1}$)--($P_{0.12}$). We will focus only on ($P_{0.1}$), ($P_{0.5}$), and ($P_{0.9}$). The cases ($P_{0.2}$)--($P_{0.4}$) are analogous to ($P_{0.5}$), the cases ($P_{0.6}$)--($P_{0.8}$) are similar to ($P_{0.5}$), and ($P_{0.10}$)--($P_{0.12}$) are similar to ($P_{0.9}$).  
	\begin{align}
	{\rm for}\ (P_{0.1}): \alpha p_2 =& \alpha_0 p_0(\beta_0)^\star + (l_1)_{1,1}(l_1)_{2,2} + (\alpha_0 p_0)^\star\beta_0 - (\gamma_0 p_0)(\gamma_0 p_0)^\star - (l_1)_{1,2}(l_1)_{1,2}^\star - (\delta_0)^\star\delta_0, \label{p2P1} \\
	{\rm for}\ (P_{0.5}): \alpha p_2 =& \alpha_0 p_0(\beta_0)^\star + (l_1)_{1,1}(l_1)_{2,2} + (\alpha_0 p_0)^\star\beta_0 - (l_1)_{1,2}(l_1)_{1,2}^\star - (l_2)_{1,2}(l_2)_{1,2}^\star, \label{p2P5} \\
	{\rm for}\ (P_{0.9}): \alpha p_2 =& (l_1)_{1,1}(l_1)_{2,2} - (\alpha_0 p_0)(\alpha_0 p_0)^\star - (l_1)_{1,2}(l_1)_{1,2}^\star - (\beta_0)^\star\beta_0. \label{p2P9}
	\end{align}
	Looking at \eqref{p2P1} and \eqref{p2P5}, the coefficient $p_2$ must appear in the right-hand side of the equation with degree $1$ and the only possibility is that it appears in the product $(l_1)_{1,1}(l_1)_{2,2}$. As a consequence, the two terms with degree $1$ in $p_0$ can not be cancelled with any other term (notice that the sum of both terms is equal to $2\beta_0\alpha_0p_0$, if $\star = \top$, and $2{\rm Re}(\beta_0\alpha_0p_0)$, if $\star = \ast$, with $\beta_0\alpha_0  = \alpha_1 \neq 0$). Therefore, the cases ($P_{0.1}$) and ($P_{0.5}$) are not possible, and because of the symmetries in the equations when spanning \eqref{detLP=alphap}, the cases ($P_{0.2}$)--($P_{0.4}$) and ($P_{0.6}$)--($P_{0.8}$) are not possible since they give us analogous equations to \eqref{p2P1} or \eqref{p2P5}, respectively.
	
	Looking at \eqref{p2P9}, the coefficient $p_2$ must appear in the right-hand side of the equation with degree $1$ and the only possibility is that it appears in the product $(l_1)_{1,1}(l_1)_{2,2}$. In addition, not only the term containing the coefficient $p_0$, but also the constant term must cancel out, and both terms cannot be cancelled out with the remaining term $(l_1)_{1,2}(l_1)_{1,2}^\star$. As a consequence, ($P_{0.9}$) is not possible, and then ($P_{0.10}$)--($P_{0.12}$) are also not possible because symmetries.
	
	Therefore, none of the cases may occur, so we get a contradiction. Then, there is no $\star$-palindromic companion quadratification for $\star$-palindromic quartic polynomials.
\end{proof}

\begin{Rem}
{\rm
	Note that, in the proof of Theorem \ref{NoStructCompLific}, we have not used the fact that $L(\la)$ is a strong quadratification. Only the property that $L(\la)$ is a quadratification, for all values of $p_0,\hdots,p_4$, is needed.
}
\end{Rem}

In \cite[Th. 2.7]{mackey2006structured}, the authors proved that $\star$-alternating and $\star$-(anti-)palindromic matrix polynomials are linked via Cayley transformations $\mathcal{C}_{-1}$ or $\mathcal{C}_{+1}$, introduced in \cite[Def. 2.4]{mackey2006structured}. We refer the reader to \cite{mackey2006structured} for more information on these relations. In addition, in \cite[Example 3.10]{mackey2015mobius}, it was shown that the Cayley transformation is a special type of M\"obius transformation, and, by \cite[Corollary 8.6]{mackey2015mobius}, we know that M\"obius transformations preserve any strong $\ell$-ification. However, we can not ensure that they preserve companion $\ell$-ifications. Below, we provide an explanation of this. The key fact is that, after applying a M\"obius transformation, companion $\ell$-ifications can be transformed into generalized companion $\ell$-ifications which are not necessarily companion. 

Let us assume that $P(\la)$ is a $\star$-palindromic matrix polynomial of grade $k$ and $Q(\la) = \mathcal{C}_{+1}(P) := (1 - \la)^kP\left(\frac{1+\la}{1 - \la}\right)$ is a $\star$-even matrix polynomial of grade $k$. Let $\mathcal{L}_P(\la)$ be a $\star$-palindromic companion pencil of $P(\la)$. Then, the question that we aim to answer is whether there exists a $\star$-even companion pencil $\mathcal{L}_Q(\la)$ of $Q(\la)$ such that $\mathcal{L}_Q(\la) = \mathcal{C}_{+1}(\mathcal{L}_P)$. This problem is showed in the following diagram.
\begin{equation*}
\xymatrix{
	P(\la)\left(\star-{\rm palindromic}\right){\ar@{-->}[r]^{\mathcal{C}_{+1}}} {\ar@{~>}[d]_{{\rm Companion\ Pencil}}} & Q(\la)\left(\star-{\rm even}\right) {\ar@{~>}[d]^{{\rm Companion\ Pencil?}}} \\
	{\cal L}_P(\la)\left(\star-{\rm palindromic}\right){\ar@{-->}[r]^{\mathcal{C}_{+1}}} & {\cal L}_Q(\la)\left(\star-{\rm even}\right)
}
\end{equation*}
Notice that we have used the Cayley transformation $\mathcal{C}_{+1}$ and the $\star$-palindromic and $\star$-even structures, but other ones can be chosen, according to \cite[Th. 2.7]{mackey2006structured}.

The following example answers negatively the question, because we obtain a matrix pencil ${\mathcal L}_Q(\la)$, which is $\star$-even, but is not companion (${\mathcal L}_Q(\la)$ does not satisfy condition {\rm (a)} in Definition \ref{CompLification}).

\begin{Exa}\label{Counterexample}
{\rm
	Let $P(\la) = \sum_{j=0}^3 \la^j P_j \in \FF[\la]^{n \times n}$ be a cubic matrix polynomial. Then, $P(\la)$ is $\star$-palindromic if $P^\star(\la) = {\rm rev}_3P(\la)$, namely, if 
	\begin{equation*}
	P_j^\star = P_{3-j},\ {\rm for\ } j=0,1.
	\end{equation*}
	Let $Q(\la)$ be a cubic matrix polynomial defined as follows:
	\begin{equation*}
	\begin{split}
	Q(\la) :=& \mathcal{C}_{+1}(P) = (1-\la)^3\left(P_0 + \left(\dfrac{1+\la}{1-\la}\right)P_1 + \left(\dfrac{1+\la}{1-\la}\right)^2 P_2 + \left(\dfrac{1+\la}{1-\la}\right)^3 P_3 \right) \\
	=& \left(P_0 + P_1 + P_2 + P_3\right) + \la\left(-3P_0 -P_1+P_2+3P_3\right) + \la^2\left(3P_0 -P_1-P_2+3P_3\right) + \\
	&\la^3\left(-P_0+P_1-P_2+P_3\right) =: \sum_{j=0}^3 \la^j Q_j.
	\end{split}
	\end{equation*}
	By \cite[Th. 2.7]{mackey2006structured}, $Q(\la)$ is $\star$-even whenever $P(\la)$ is $\star$-palindromic.
	
	Now, let us consider the following matrix pencil 
	\begin{equation*}
	\mathcal{L}_P(\la) := \mathcal{L}_{P,0} + \la \mathcal{L}_{P,1}= \begin{bmatrix} 0 & P_0 + \la P_1 & -\la I_n \\ P_2 + \la P_3 & 0 & I_n \\ -I_n & \la I_n & 0 \end{bmatrix},
	\end{equation*}
	which is a companion pencil of $P(\la)$ (see Definition \ref{CompLification}) and, moreover, $\mathcal{L}_P(\la)$ is $\star$-palindromic whenever $P(\la)$ is, and, as above, we define ${\cal L}_Q(\la)$ as the matrix pencil
	\begin{equation*}
	\begin{split}
	\mathcal{L}_{Q}(\la) :=& \mathcal{C}_{+1}(\mathcal{L}_{P}) = (1-\la)\left(\mathcal{L}_{P,0} + \left(\dfrac{1+\la}{1-\la}\right)\mathcal{L}_{P,1}\right) = \left( \mathcal{L}_{P,0} + \mathcal{L}_{P,1} \right) + \la\left(-\mathcal{L}_{P,0} + \mathcal{L}_{P,1} \right) \\
	=& \begin{bmatrix} 0 & P_0 + P_1 + \la(-P_0 + P_1) & -I_n - \la I_n \\ P_2 + P_3 + \la (-P_2+P_3) & 0 & I_n - \la I_n \\ -I_n + \la I_n & I_n + \la I_n & 0 \end{bmatrix} \\
	=& \begin{bmatrix} 0 & \left(\frac{2Q_0 - Q_1 + 2Q_3}{4}\right) + \la \left(\frac{Q_0 - Q_2 + Q_3}{4}\right) & -I_n - \la I_n \\ \left(\frac{2Q_0 + Q_1 - Q_3}{4}\right) + \la \left(\frac{-Q_0 + Q_2 +2Q_3}{4}\right) & 0 & I_n - \la I_n \\ -I_n + \la I_n & I_n + \la I_n & 0 \end{bmatrix},
	\end{split}
	\end{equation*}
	which is $\star$-even whenever $P(\la)$ is $\star$-palindromic. However, $\mathcal{L}_Q(\la)$ is not a companion pencil of $Q(\la)$ since there are nonzero block entries of $\mathcal{L}_Q(\la)$, namely the $(1,2)$ and $(2,1)$ blocks, which do not fulfill condition {\rm (a)} in Definition \ref{CompLification}.
}
\end{Exa}  

As a consequence of Example \ref{Counterexample}, M\"obius transformations do not preserve the property of being a companion $\ell$-ification. For this reason, the proof of Theorem \ref{NoStructCompLific} has been carried out for all the structures in Definition \ref{Structures-Mobius} separately.

The question on whether there are structured companion $\ell$-ifications of structured matrix polynomials of grade $k = (2d) \ell$ is still open in general. We have only proved, in Theorem \ref{NoStructCompLific}, the case $d=1$ and $\ell=2$, that is, there are no structured companion quadratifications of structured quartic matrix polynomials.

\section{Conclusions}\label{conclusion_sec}

In this paper we have presented, for the first time, structured (generalized) companion $\ell$-ifications of structured matrix polynomials of grade $k=(2d+1)\ell$, given in the monomial basis, for the main structures of matrix polynomials that are frequently considered in the literature (namely, (skew)-symmetric, (skew-)Hermitian, (anti-)palindromic, and alternating structures). The constructions are structured versions of the block-Kronecker $\ell$-ifications presented in \cite{dopico2019block}. We have also determined the smallest number of nonzero block entries in these constructions, and we have provided a procedure to construct sparse structured $\ell$-ifications within this family. Finally, we have shown that there are no structured companion quadratifications for quartic matrix polynomials. 

As a natural continuation of this work, some lines of future research are the following: (1) To show whether or not there are structured companion $\ell$-ifications for matrix polynomials of grade $k=(2d)\ell$; (2) to look for (not necessarily structured) $\ell$-ifications for matrix polynomials given in other bases than the monomial basis.


\bibliographystyle{abbrv}
\bibliography{Bibliography}

\begin{thebibliography}{10}

\bibitem{AhMe13}
S.~S. Ahmad and V.~Mehrmann.
\newblock Backward errors for eigenvalues and eigenvectors of {H}ermitian,
  skew-{H}ermitian, {H}-even and {H}-odd matrix polynomials.
\newblock {\em Lin. Multilin. Algebra}, 61:1244--1266, 2013.

\bibitem{amiraslani2008linearization}
A.~Amiraslani, R.~M. Corless, and P.~Lancaster.
\newblock Linearization of matrix polynomials expressed in polynomial bases.
\newblock {\em IMA J. Numer. Anal.}, 29(1):141--157, 2008.

\bibitem{antoniou2004new}
E.~N. Antoniou and S.~Vologiannidis.
\newblock A new family of companion forms of polynomial matrices.
\newblock {\em Electron. J. Linear Algebra}, 11(411):78--87, 2004.

\bibitem{AnVo06}
E.~N. Antoniou and S.~Vologiannidis.
\newblock {L}inearizations of polynomial matrices with symmetries and their
  applications.
\newblock {\em {E}lectron. {J}. {L}inear {A}lgebra}, 15(411):107--114, 2006.

\bibitem{ashkar2020linearizations}
A.~Ashkar, M.~I. Bueno, R.~Kassem, D.~Mileeva, and J.~P{\'e}rez.
\newblock Linearizations for interpolation bases -- a comparison: new families
  of linearizations.
\newblock {\em Submitted. Available as {\em arXiv:2001.03268v2}}, 2020.

\bibitem{BHMS13}
T.~Betcke, N.~Higham, V.~Mehrmann, C.~Schr\"{o}der, and F.~Tisseur.
\newblock {NLEVP}: {A} {C}ollection of {N}onlinear {E}igenvalue {P}roblems.
\newblock {\em {ACM} {T}rans. {M}ath. {S}oftware}, 39(2):7:1--7:28, 2013.

\bibitem{binirobol16}
D.~A. Bini and L.~Robol.
\newblock On a class of matrix pencils and $\ell$-ifications equivalent to a
  given matrix polynomial.
\newblock {\em Linear Algebra Appl.}, 502:275--298, 2016.

\bibitem{BoKaMeSha14}
S.~Bora, M.~Karow, C.~Mehl, and P.~Sharma.
\newblock Structured eigenvalue backward errors of matrix pencils and
  polynomials with {H}ermitian and related structures.
\newblock {\em SIAM J. Matrix Anal. Appl.}, 35:453--475, 2014.

\bibitem{BoMe06}
S.~Bora and V.~Mehrmann.
\newblock {L}inear perturbation theory for structured matrix pencils arising in
  control theory.
\newblock {\em {SIAM} {J}. {M}atrix {A}nal. {A}ppl.}, 28(1):148--169, 2006.

\bibitem{bueno2014structuredI}
M.~I. Bueno, K.~Curlett, and S.~Furtado.
\newblock Structured strong linearizations from {F}iedler pencils with
  repetition {I}.
\newblock {\em Linear Algebra Appl.}, 460:51--80, 2014.

\bibitem{bueno2014eigenvectors}
M.~I. Bueno and F.~De~Ter{\'a}n.
\newblock Eigenvectors and minimal bases for some families of {F}iedler-like
  linearizations.
\newblock {\em Lin. Multilin. Algebra}, 62:39--62, 2014.

\bibitem{bueno2011recovery}
M.~I. Bueno, F.~De~Ter{\'a}n, and F.~M. Dopico.
\newblock Recovery of eigenvectors and minimal bases of matrix polynomials from
  generalized {F}iedler linearizations.
\newblock {\em SIAM J. Matrix Anal. Appl.}, 32(2):463--483, 2011.

\bibitem{bueno2018calcolo}
M.~I. Bueno, F.~M. Dopico, S.~Furtado, and I.~Viviano.
\newblock A block-symmetric linearization of odd degree matrix polynomials with
  optimal eigenvalue condition number and backward error.
\newblock {\em Calcolo}, 55, 32, 2018.

\bibitem{bueno2012palindromic}
M.~I. Bueno and S.~Furtado.
\newblock Palindromic linearizations of a matrix polynomial of odd degree
  obtained from {F}iedler pencils with repetition.
\newblock {\em Electron. J. Linear Algebra}, 23:562--577, 2012.

\bibitem{bueno2014structuredII}
M.~I. Bueno and S.~Furtado.
\newblock Structured strong linearizations from {F}iedler pencils with
  repetition {II}.
\newblock {\em Linear Algebra Appl.}, 463:282--321, 2014.

\bibitem{bueno2018explicit}
M.~I. Bueno, M.~Martin, J.~P{\'e}rez, A.~Song, and I.~Viviano.
\newblock Explicit block-structures for block-symmetric {F}iedler-like pencils.
\newblock {\em Electron. J. Linear Algebra}, 34(1):472--499, 2018.

\bibitem{corless2007generalized}
R.~M. Corless.
\newblock On a generalized companion matrix pencil for matrix polynomials
  expressed in the {L}agrange basis.
\newblock In {\em Wang D. and Zhi L.--H. (eds) Symbolic-Numeric Computation.
  Trends in Mathematics, Birkh\"auser, Basel}, pages 1--15, 2007.

\bibitem{de2010fiedler}
F.~De~Ter{\'a}n, F.~M. Dopico, and D.~S. Mackey.
\newblock Fiedler companion linearizations and the recovery of minimal indices.
\newblock {\em SIAM J. Matrix Anal. Appl.}, 31(4):2181--2204, 2010.

\bibitem{de2011palindromic}
F.~De~Ter{\'a}n, F.~M. Dopico, and D.~S. Mackey.
\newblock Palindromic companion forms for matrix polynomials of odd degree.
\newblock {\em J. Comput. Appl. Math.}, 236(6):1464--1480, 2011.

\bibitem{de2012fiedler}
F.~De~Ter{\'a}n, F.~M. Dopico, and D.~S. Mackey.
\newblock Fiedler companion linearizations for rectangular matrix polynomials.
\newblock {\em Linear Algebra Appl.}, 437(3):957--991, 2012.

\bibitem{de2014spectral}
F.~De~Ter{\'a}n, F.~M. Dopico, and D.~S. Mackey.
\newblock Spectral equivalence of matrix polynomials and the {I}ndex {S}um
  {T}heorem.
\newblock {\em Linear Algebra Appl.}, 459:264--333, 2014.

\bibitem{de2016polynomial}
F.~De~Ter{\'a}n, F.~M. Dopico, D.~S. Mackey, and P.~Van~Dooren.
\newblock Polynomial zigzag matrices, dual minimal bases, and the realization
  of completely singular polynomials.
\newblock {\em Linear Algebra Appl.}, 488:460--504, 2016.

\bibitem{de2015matrix}
F.~De~Ter{\'a}n, F.~M. Dopico, and P.~Van~Dooren.
\newblock Matrix polynomials with completely prescribed eigenstructure.
\newblock {\em SIAM J. Matrix Anal. Appl.}, 36(1):302--328, 2015.

\bibitem{de2016ellifications}
F.~De~Ter{\'a}n, F.~M. Dopico, and P.~Van~Dooren.
\newblock Constructing strong $\ell$-ifications from dual minimal bases.
\newblock {\em Linear Algebra Appl.}, 495(1):344--372, 2016.

\bibitem{de2020gen}
F.~De~Ter{\'a}n and C.~Hernando.
\newblock A note on generalized companion pencils in the monomial basis.
\newblock {\em RACSAM}, 114(1):8, 2020.

\bibitem{dopico2018block}
F.~M. Dopico, P.~W. Lawrence, J.~P{\'e}rez, and P.~Van~Dooren.
\newblock Block-{K}ronecker linearizations of matrix polynomials and their
  backward errors.
\newblock {\em Numer. Math.}, 140(2):373--426, 2018.

\bibitem{dopico2019block}
F.~M. Dopico, J.~P{\'e}rez, and P.~Van~Dooren.
\newblock Block minimal bases $\ell$-ifications of matrix polynomials.
\newblock {\em Linear Algebra Appl.}, 562:163--204, 2019.

\bibitem{dopico2019structured}
F.~M. Dopico, J.~P{\'e}rez, and P.~Van~Dooren.
\newblock Structured backward error analysis of linearized structured
  polynomial eigenvalue problems.
\newblock {\em Math. Comp.}, 88(317):1189--1228, 2019.

\bibitem{eastman2014companion}
B.~Eastman, I.-J. Kim, B.~Shader, and K.~Vander~Meulen.
\newblock Companion matrix patterns.
\newblock {\em Linear Algebra Appl.}, 463:255--272, 2014.

\bibitem{eastman2016pentadiagonal}
B.~Eastman and K.~N. Vander~Meulen.
\newblock Pentadiagonal companion matrices.
\newblock {\em Spec. Matrices}, 4(1):13--30, 2016.

\bibitem{fassbender17}
H.~Fassbender and P.~Saltenberger.
\newblock On vector spaces of linearizations for matrix polynomials in
  orthogonal bases.
\newblock {\em Linear Algebra Appl.}, 525:59--83, 2017.

\bibitem{forney1975minimal}
G.~D. Forney, Jr.
\newblock Minimal bases of rational vector spaces, with applications to
  multivariable linear systems.
\newblock {\em SIAM J. Control}, 13(3):493--520, 1975.

\bibitem{HiMM04}
A.~Hilliges, C.~Mehl, and V.~Mehrmann.
\newblock {O}n the solution of palindromic eigenvalue problems.
\newblock In {\em {P}roceedings of the 4th {E}uropean {C}ongress on
  {C}omputational {M}ethods in {A}pplied {S}ciences and {E}ngineering
  ({ECCOMAS}). {J}yv{\"a}skyl{\"a}, {F}inland}, 2004.

\bibitem{horn1994}
R.~A. Horn and C.~R. Johnson.
\newblock {\em Matrix {A}nalysis}.
\newblock Cambridge University Press, 1994.

\bibitem{horn1993real}
R.~A. Horn and D.~I. Merino.
\newblock A real--coninvolutory analog of the polar decomposition.
\newblock {\em Linear Algebra Appl.}, 190:209--227, 1993.

\bibitem{huang2011palindromic}
T.-M. Huang, W.-W. Lin, and W.-S. Su.
\newblock Palindromic quadratization and structure-preserving algorithm for
  palindromic matrix polynomials of even degree.
\newblock {\em Numer. Math.}, 118(4):713--735, 2011.

\bibitem{lawrence2017constructing}
P.~W. Lawrence and J.~P\'erez.
\newblock Constructing strong linearizations of matrix polynomials expressed in
  {C}hebyshev bases.
\newblock {\em SIAM J.Matrix Anal. Appl.}, 38(3):683--709, 2017.

\bibitem{mackey2006structured}
D.~S. Mackey, N.~Mackey, C.~Mehl, and V.~Mehrmann.
\newblock Structured polynomial eigenvalue problems: {G}ood vibrations from
  good linearizations.
\newblock {\em SIAM J. Matrix Anal. Appl.}, 28(4):1029--1051, 2006.

\bibitem{mackey2009numerical}
D.~S. Mackey, N.~Mackey, C.~Mehl, and V.~Mehrmann.
\newblock Numerical methods for palindromic eigenvalue problems: {C}omputing
  the anti-triangular {S}chur form.
\newblock {\em Numer. Linear Algebra Appl.}, 16(1):63--86, 2009.

\bibitem{mackey2015mobius}
D.~S. Mackey, N.~Mackey, C.~Mehl, and V.~Mehrmann.
\newblock M{\"o}bius transformations of matrix polynomials.
\newblock {\em Linear Algebra Appl.}, 470:120--184, 2015.

\bibitem{mehrmann2004nonlinear}
V.~Mehrmann and H.~Voss.
\newblock {N}onlinear eigenvalue problems: {A} challenge for modern eigenvalue
  methods.
\newblock {\em {GAMM}-{M}itteilungen}, 27(2):121--152, 2004.

\bibitem{MeXu15}
V.~Mehrmann and H.~Xu.
\newblock {S}tructure preserving deflation of infinite eigenvalues in
  structured pencils.
\newblock {\em {E}lectron. {T}rans. {N}umer. {A}nal.}, 44:1--24, 2015.

\bibitem{noferini2016fiedler}
V.~Noferini and J.~P{\'e}rez.
\newblock Fiedler--comrade and {F}iedler--{C}hebyshev pencils.
\newblock {\em SIAM J. Matrix Anal. Appl.}, 37(4):1600--1624, 2016.

\bibitem{robol2017framework}
L.~Robol, R.~Vandebril, and P.~Van~Dooren.
\newblock A framework for structured linearizations of matrix polynomials in
  various bases.
\newblock {\em SIAM J. Matrix Anal. Appl.}, 38(1):188--216, 2017.

\bibitem{TiMe01}
F.~Tisseur and K.~Meerbergen.
\newblock {T}he quadratic eigenvalue problem.
\newblock {\em {SIAM} {R}ev.}, 43(2):235--286, 2001.

\bibitem{vologiannidis2011permuted}
S.~Vologiannidis and E.~N. Antoniou.
\newblock A permuted factors approach for the linearization of polynomial
  matrices.
\newblock {\em Math. Control Signals Syst.}, 22(4):317--342, 2011.

\end{thebibliography}
%
%
%
\end{document}